\documentclass[11pt, reqno]{amsart}
\usepackage[latin1]{inputenc}
\usepackage{amssymb}
\usepackage{pdfsync}
\usepackage[english]{babel}
\usepackage{appendix}

\usepackage[pdftex, colorlinks=true,urlcolor=blue,linkcolor=blue,citecolor=blue, linktocpage]{hyperref}
\usepackage{graphicx}

\usepackage[capitalize]{cleveref}
\usepackage{fullpage}
\usepackage{color}
\usepackage{amsmath}
\usepackage{amsfonts}
\usepackage{mathrsfs}
\usepackage{t1enc , graphicx}
\usepackage{verbatim}
\usepackage{bbm}
\usepackage{enumitem}
\usepackage{tikz-cd}
\usepackage{adjustbox}
\usepackage[left= 1.2 in, right= 1.2 in ,top= 1.2 in, bottom = 2.1 in]{geometry}

\addtolength{\textheight}{3 cm}
\newcommand{\R}{\mathbb{R}}

\newcommand{\C}{\mathbb{C}}
\newcommand{\T}{\mathbb{T}}

\newcommand{\fp}{\mathfrak{p}}
\newcommand{\Q}{\mathbb{Q}}
\newcommand{\Z}{\mathbb{Z}}
\newcommand{\N}{\mathbb{N}}
\newcommand{\F}{\mathbb{F}}
\newcommand{\E}{\mathbb{E}}
\renewcommand{\P}{\mathbb{P}}
\newcommand{\Pchen}{\mathbb{P}_{\mathrm{Chen}}}

\newcommand{\abs}[1]{\left\lvert #1 \right\rvert}

\newcommand{\OK}{\mathcal{O}_K}

\newtheorem{theorem}{Theorem}[section]
\newtheorem*{theorem*}{Theorem}
\newtheorem{proposition}[theorem]{Proposition}
\newtheorem{lemma}[theorem]{Lemma}

\newtheorem*{corollary*}{Corollary}
\newtheorem{question}[theorem]{Question}
\newtheorem{conjecture}[theorem]{Conjecture}

\theoremstyle{definition}
\newtheorem*{definition*}{Definition}
\newtheorem{definition}[theorem]{Definition}

\theoremstyle{remark}

\newtheorem*{remark*}{Remark}
\newtheorem{remark}{Remark}


\theoremstyle{plain}
\newcounter{MainTheoremCounter}

\newtheorem{Maintheorem}[MainTheoremCounter]{Theorem}

\theoremstyle{plain}
\newcounter{OldTheoremCounter}

\newcommand{\vertiii}[1]{{\left\vert\kern-0.25ex\left\vert\kern-0.25ex\left\vert #1 
		\right\vert\kern-0.25ex\right\vert\kern-0.25ex\right\vert}}

\author{Pierre-Yves Bienvenu}

\author{John T. Griesmer}    
\author{Anh N. Le}
\author{Th\'ai Ho\`ang L\^e}

\address{Institute of Discrete Mathematics and Geometry, TU Wien,
Wiedner Hauptstr. 8--10, A-1040 Wien, Austria} 
\email{pierre.bienvenu@tuwien.ac.at}

\address{Department of Applied Mathematics and Statistics\\
	Colorado School of Mines\\
	1005 14th Street, Golden, CO 80401, USA} 
\email{jtgriesmer@gmail.com}

\address{Department of Mathematics\\
	University of Denver\\
	2390 S. York St, Denver, CO 80210, USA}
\email{anh.n.le@du.edu}

\address{Department of Mathematics\\
	University of Mississippi\\
	University, MS 38677, USA}
\email{leth@olemiss.edu}

\title{Intersective sets for sparse sets of integers}


\begin{document}
	\maketitle


 
	\begin{abstract}
  For $E \subset \N$, a subset $R \subset \N$ is \emph{$E$-intersective} if for every $A \subset E$ having positive upper relative density, we have $R \cap (A - A) \neq \varnothing$. On the other hand, $R$ is \emph{chromatically $E$-intersective} if for every finite partition $E=\bigcup_{i=1}^k E_i$, there exists $i$ such that $R\cap (E_i-E_i)\neq\varnothing$. When $E=\N$, we recover the usual notions of intersectivity and chromatic intersectivity. 
  In this article, we investigate to which extent known intersectivity results hold in the relative setting when $E = \P$, the set of primes, or other sparse subsets of $\N$. Among other things, we prove:
  \begin{enumerate}
      \item There exists an intersective set that is not $\P$-intersective.

      \item However, every $\P$-intersective set is intersective. 

      \item There exists a chromatically $\P$-intersective set which is not intersective (and therefore not $\P$-intersective).

      \item The set of shifted Chen primes $\P_{\mathrm{Chen}} + 1$ is $\P$-intersective (and therefore intersective).
  \end{enumerate}
\end{abstract}

\tableofcontents

\section{Introduction}
\subsection{Combinatorial theorems in dense sets of integers and transference to sparse sets}

Let $\N$ be the set of positive integers $\{1, 2, 3, \ldots\}$ and for $N \in \N$, define $[N] = \{1, 2, \ldots, N\}$.
If $A, E \subset \N$, the \emph{upper density of $A$ relative to $E$} is defined as
\[
\overline{d}_{E}(A) := \limsup_{N \to \infty} \frac{|A \cap E \cap [N]|}{|E \cap [N]|}.
\]
Similarly,  the \emph{lower density of $A$ relative to $E$} is 
\[
\underline{d}_{E}(A) := \liminf_{N \to \infty} \frac{|A \cap E \cap [N]|}{|E \cap [N]|}.
\]
When the ambient set $E$ is unambiguous from context, we simply say the upper relative density and the lower relative density of $A$ without mentioning $E$.

Note that $\underline{d}_{E}(A)\leq \overline{d}_{E}(A)$.
If equality holds, we denote by $d_E(A)$ the common value and call it the \emph{density of $A$ relative to $E$}.
If $E=\N$, we omit $E$ and simply write $\overline{d}(A), \underline{d}(A), d(A)$ and call them the upper density, lower density, and density of $A$, respectively. We say a set $A$ of integers is \emph{dense} if $\overline{d}(A)>0$ and \emph{sparse} if $\overline{d}(A) = 0$. More generally, we say that $A$ is \emph{dense relative to $E$} if $\overline{d}_{E}(A)>0$, and that $A$ is \emph{sparse relative to $E$} otherwise.

Dense subsets of $\N$ are known to inherit many combinatorial properties of $\N$. For example, Roth \cite{Roth53} proved that every dense set contains infinitely many three-term arithmetic progressions. Szemer\'edi \cite{Szemeredi75} showed such a set contains arbitrarily long arithmetic progressions. That being said, these properties are not exclusive to dense sets. For instance, despite being a sparse set, the set of primes $\P$ also enjoys the same properties. In \cite{Green_Roththeoremprime}, Green devised a transference principle to deduce from Roth's theorem that every set which is dense relative to $\P$ contains three-term arithmetic progressions. This transference principle was a precursor to another one which enabled Green and Tao \cite{Green_Tao08} to prove that a dense subset of primes contains arbitrarily long arithmetic progressions.
	Since then, many variants of the transference principle were devised
	to prove combinatorial theorems in sparse sets of integers such as the
	squares \cite{broPre}, the sums of two squares \cite{matt}
	and various relatively sparse subsets of the primes.
	
	
	Against this backdrop, the first goal of our paper is to investigate whether other combinatorial properties of $\N$ may be transferred to $\P$ and other sparse sets.
	The properties we will study are the so-called intersective properties, which we now define.

\subsection{Intersectivity}
Given $A, B\subset \Z$, we define their \emph{sumset} and \emph{difference set} to be $A + B := \{a + b: a \in A, b \in B\}$ and $A - B := \{a - b: a \in A, b \in B\}$, respectively.
\begin{definition}
    For an infinite set $E \subset \N$, a subset $R \subset \N$ is said to be \emph{$E$-intersective} if for every $A \subset E$ with $\overline{d}_E(A) > 0$, we have $R \cap (A - A) \neq \varnothing$. 
\end{definition}
Thus, $R$ is $E$-intersective if every $A$ satisfying $\overline{d}_E(A) > 0$ contains two distinct elements differing by an element of $R$.
 In the case $E = \P$, we will say $R$ is \emph{prime intersective}. On the other hand, if $E = \N$, then $R$ is simply called \emph{intersective}. 
In the contexts where another notion of intersectivity is also being studied, we will use the terminologies density intersective set and density $E$-intersective set, but for now we omit this precise denomination.

An intersective set is also called a \emph{set of recurrence}. 
It is because by the Furstenberg's Correspondence Principle \cite[Theorem 1.1]{Furstenberg77} and Bergelson's Intersectivity Lemma \cite[Theorem 1.2]{Bergelson-sets-of-recurrence} (see also \cite{Bertrand-Mathis-Ensembles-intersectifs}), a set $R$ is intersective if and only if for every measure preserving system\footnote{A measure preserving system is a quadruple $(X, \mathcal{B}, \mu, T)$ where $(X, \mathcal{B}, \mu)$ is a probability space and $T: X \to X$ is a $\mathcal{B}$-measurable map satisfying $\mu(T^{-1} B) = \mu(B)$ for all $B \in \mathcal{B}$.} $(X, \mathcal{B}, \mu, T)$ and $B \in \mathcal{B}$ satisfying $\mu(B) > 0$, there exists $r \in R$ such that
\begin{equation}\label{eq:set_of_recurrence_def}
	\mu(B \cap T^{-r} B) > 0.
\end{equation}
	
In the late 1970s, S\'ark\"ozy \cite{Sarkozy78} and Furstenberg \cite{Furstenberg77, Furstenberg81} independently proved that $\{n^2: n \in \N\}$ is intersective. Furstenberg used ergodic theory while S\'ark\"ozy's proof is inspired by the original proof of Roth's theorem \cite{Roth53} which employs the circle method. 
S\'ark\"ozy \cite{Sarkozy78b} went on and proved that $\{n^2 - 1: n > 1\}$, $\P - 1$ and $\P + 1$ are also intersective, confirming conjectures of Erd\H{o}s. Kamae and Mend\`es France's criterion \cite{Kamae_France78} provides a generalization of S\'ark\"ozy's results to arbitrary polynomials of integer coefficients, which we now state.
	\begin{theorem}[\cite{Kamae_France78}]
		Suppose $Q \in \Z[x]$ has positive leading coefficient. 
		\begin{enumerate}
			\item \label{def:intersective_polynomial} The set $Q(\N)$ is intersective if and only if for every $m \in \N$, there is $n \in \Z$ such that $Q(n) \equiv 0 \pmod{ m}$. If this condition holds, we say $Q$ is an \emph{intersective polynomial}.
			
			\item \label{def:intersective_second_kind} The set $Q(\P)$ is intersective if and only if for every $m \in \N$, there is $n \in \Z$ such that $\gcd(m, n) = 1$ and $Q(n) \equiv 0 \pmod{ m}$.
		\end{enumerate}
	\end{theorem}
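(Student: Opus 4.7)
Each equivalence consists of an easy necessity direction (the residue condition is forced by choosing a suitable dense set $A$) and a substantive sufficiency direction, which is the S\'ark\"ozy--Furstenberg theorem for part~(1) and its prime analogue for part~(2).

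For necessity of part~(1), if $Q$ has no root modulo some $m \in \N$, take $A = m\N$; then $A$ has density $1/m$, $A - A = m\Z$ is disjoint from $Q(\N)$ by hypothesis, and so $Q(\N)$ is not intersective. For necessity of part~(2), one has to force the argument of $Q$ at the witnessing value to be coprime to $m$. Fix $m$ and suppose that no integer coprime to $m$ is a root of $Q$ modulo $m$. Then the finite set $F := \{Q(p) : p \in \P,\, p \mid m\}$ contains every value $Q(p)$ divisible by $m$, because $p \in \P$ with $m \mid Q(p)$ and $\gcd(p,m)=1$ would directly violate the assumption. Choose $M$ so large that $Mm > \max_{q \in F \setminus \{0\}} |q|$ and set $A := \{p \in \P : p \equiv 1 \pmod{Mm}\}$, which has positive relative density in $\P$ by Dirichlet. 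Any nonzero $r = Q(p) \in Q(\P) \cap (A - A)$ would simultaneously satisfy $|r| \geq Mm$ (because $r \in Mm\Z \setminus \{0\}$) and $r \in F$, contradicting the choice of $M$; hence $Q(\P)$ is not $\P$-intersective.

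For sufficiency, the natural framework is Furstenberg's correspondence principle, reducing each implication to the recurrence assertion that for every measure preserving system $(X,\mathcal{B},\mu,T)$ and every $B \in \mathcal{B}$ with $\mu(B) > 0$, there is $n \in \N$ (respectively a prime $p$) with $\mu(B \cap T^{-Q(n)} B) > 0$. One proves the stronger quantitative statement that a suitable Ces\`aro average of these correlations is bounded below by a positive function of $\mu(B)$, via the spectral decomposition of $1_B$ combined with control of the exponential sums
\[
S_N(\alpha) := \sum_{n \leq N} e^{2\pi i Q(n)\alpha} \quad\text{or}\quad T_N(\alpha) := \sum_{p \leq N} e^{2\pi i Q(p)\alpha}.
\]
The intersective hypothesis on $Q$ is precisely what produces a nontrivial major-arc contribution at the rational frequencies $\alpha = a/m$, while Weyl's inequality (respectively Vinogradov's estimates for sums over primes) handles the minor-arc contribution.

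The main obstacle is the sufficiency of part~(2): the prime-variable sum $T_N(\alpha)$ requires the considerably more delicate bilinear-form machinery of Vinogradov/Vaughan, rather than the elementary Weyl differencing that suffices for $S_N(\alpha)$, and the coprimality constraint in the residue condition must be matched by a Siegel--Walfisz-type input controlling primes in arithmetic progressions modulo $m$. A purely ergodic-theoretic alternative proceeds via a PET induction scheme combined with uniformity estimates for the von Mangoldt function along polynomial progressions, effectively transferring the dense polynomial recurrence statement to primes.
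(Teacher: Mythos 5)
This theorem is quoted in the paper from Kamae--Mend\`es France \cite{Kamae_France78} as background and is not proved there, so there is no in-paper argument to compare against; your proposal has to stand on its own. The necessity direction of part~(1) is correct and standard. The necessity direction of part~(2), however, contains a genuine error: the statement asserts that $Q(\P)$ is \emph{intersective}, i.e.\ intersective with respect to $\N$, so to negate it you must exhibit a set $A$ of positive upper density \emph{in $\N$} with $Q(\P)\cap(A-A)=\varnothing$. Your witnessing set $A=\{p\in\P : p\equiv 1 \pmod{Mm}\}$ is a subset of the primes and has zero upper density in $\N$; what you actually prove is that $Q(\P)$ is not $\P$-intersective, which is a strictly weaker conclusion (by the paper's Theorem~\ref{thm:e-intersective-implies-n-intersective}, ``not intersective'' implies ``not $\P$-intersective,'' not conversely). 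The good news is that your preparation is exactly right and the fix is immediate: having established that $Q(\P)\cap m\Z$ is contained in the finite set $F=\{Q(p): p\in\P,\ p\mid m\}$, take $A=Mm\N$ with $Mm>\max_{q\in F\setminus\{0\}}|q|$; then $A$ has density $1/(Mm)$ in $\N$, and any element of $Q(\P)\cap(A-A)$ would be a nonzero multiple of $Mm$ lying in $F$, which is impossible.

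The sufficiency directions, which constitute essentially all of the depth of the theorem, are left as a roadmap rather than a proof. Saying that one should pass through Furstenberg correspondence, split $1_B$ spectrally, and control $S_N(\alpha)$ by Weyl's inequality on minor arcs while extracting a positive major-arc contribution from the local solvability hypothesis is a correct description of the S\'ark\"ozy-style strategy (and the prime-variable version genuinely does require Vaughan/Vinogradov bilinear estimates plus Siegel--Walfisz, as you say), but none of the quantitative work is carried out: in particular you do not explain how the hypothesis ``$Q$ has a root mod $m$'' (respectively, a root coprime to $m$) translates into nonvanishing of the relevant local factors, nor how the minor-arc bound is summed against the spectral measure to produce a uniform lower bound on the averaged correlations. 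It is also worth noting that Kamae and Mend\`es France's own route is somewhat different in flavor: they establish a general criterion (closely tied to van der Corput sets and equidistribution of $(Q(n)\alpha)_n$ for irrational $\alpha$) from which both parts follow, rather than running a density-increment or correspondence-principle argument for each case separately. As written, the proposal correctly identifies the architecture of a proof but does not supply one.
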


All intersective sets mentioned above are also prime intersective. Indeed, using Green's transference principle \cite{Green_Roththeoremprime}, the fourth author \cite{LeThaiHoang} proved that if $Q$ is an intersective polynomial (i.e. satisfies the condition in \eqref{def:intersective_polynomial}), then $Q(\N)$ is prime intersective. 
 This result is now superseded by Rice \cite{rice2}, who proved that if $Q(\N)$ is not $E$-intersective, 
 then $|E \cap [N]| \leq c_1 \frac{N}{(\log N)^{c_2 \log \log \log \log N}}$, where $c_1 >0$ is a constant depending on $Q$ and $c_2 >0$ depends only on the degree of $Q$. Li and Pan \cite{Li-Pan-differencesets} showed that if $Q(1) = 0$ then $Q(\P)$ is also prime intersective. For the general case (when $Q$ satisfies \eqref{def:intersective_second_kind}), $Q(\P)$ is proved to be prime intersective by Rice \cite{Rice_Sarkozytheorem}.


\subsection{Shifted Chen primes}

Our first result contributes to the previously mentioned pool of sets that are both intersective and prime intersective. The sets that we study come from almost twin primes.
A prime $p$ is a \emph{Chen prime} if $p + 2$ is a product of at most two primes. Chen \cite{Chen-OnRepresentation} proved that the set of Chen primes, denoted by $\P_{\mathrm{Chen}}$, is infinite. Another type of almost twin primes is \emph{bounded gap primes}. For a fixed natural number $h$, let $\P_{\mathrm{bdd}, h}$ be the set of primes $p$ such that $p + h$ is a prime. The celebrated theorem of Zhang \cite{Zhang_boundedgaps} shows that there exists $h \in \N$ such that $\P_{\mathrm{bdd}, h}$ is infinite. 

As previously mentioned, the results of S\'ark\"ozy \cite{Sarkozy78b} and Li-Pan \cite{Li-Pan-differencesets} say that $\P - 1$ and $\P + 1$ are both intersective and prime intersective. Therefore, a natural question is whether $\P_{\mathrm{Chen}} - 1, \P_{\mathrm{Chen}} + 1, \P_{\mathrm{bdd}, h} - 1$ and $\P_{\mathrm{bdd}, h} + 1$ are intersective (and prime intersective) for some $h \in \N$.

An intersective set must contain a nonzero multiple of every natural number. If $p \in \P_{\mathrm{bdd}, h}$ then $p + h \in \P$. Therefore, $\P_{\mathrm{bdd}, h} - 1$ is a subset of $\P - h - 1$ which does not contain a nonzero multiple of $h+1$. Thus $\P_{\mathrm{bdd}, h} - 1$ cannot be intersective. For a similar reason, $\P_{\mathrm{bdd}, h} + 1$ is not intersective unless $h = 2$. This leads to a question: Is $\P_{\mathrm{bdd}, 2} + 1$ intersective?
Even though this question is interesting, it is out of the scope of our investigation because a positive answer will imply that there are infinitely many twin primes.
On the other hand, the matter is more tractable regarding Chen primes, and we are able to prove: 
\begin{Maintheorem}\label{thm:Chen_primes_recurrence}
    $\P_{\mathrm{Chen}} + 1$ is both intersective and prime intersective.
\end{Maintheorem}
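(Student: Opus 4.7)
The strategy is to carry over Sárközy's circle-method proof that $\P + 1$ is intersective, together with the Li--Pan / Rice transference arguments that upgrade to prime intersectivity, replacing the exponential sum over primes by the one over Chen primes. Fix a candidate set $A$ that is either of positive upper density in $\N$ (intersective case) or of positive upper density relative to $\P$ (prime-intersective case), and consider the count
\[
T(N) := \sum_{p \in \P_{\mathrm{Chen}} \cap [N]} \#\bigl\{(a,b) \in (A\cap[N])^2 : a - b = p+1\bigr\} = \int_0^1 \bigl|\widehat{\mathbf{1}_{A\cap[N]}}(\alpha)\bigr|^2\, \overline{S(\alpha)}\, d\alpha,
\]
where $S(\alpha) := \sum_{p \in \P_{\mathrm{Chen}}\cap[N]} e\bigl((p+1)\alpha\bigr)$. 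Showing $T(N)>0$ for $N$ large produces a Chen prime $p$ with $p+1 \in A - A$, which is exactly the conclusion sought.

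The two inputs that the circle method requires, now specialised to Chen primes, are (i) a \emph{major-arc asymptotic}, namely that Chen primes equidistribute in reduced residue classes modulo $q$ for $q \leq (\log N)^C$, with a positive singular series coming from Chen's sieve together with a Bombieri--Vinogradov-type input for the almost-prime factor $p+2$; and (ii) a \emph{minor-arc bound} $\sup_{\alpha \in \mathfrak m}|S(\alpha)| = o\bigl(N/(\log N)^2\bigr)$. Granting (i), the major-arc contribution to $T(N)$ is bounded below by $c\delta^2 N/(\log N)^2$ for an absolute constant $c>0$, using the density of $A$ and the positivity of the singular series: no local obstruction arises, because a shift by $+1$ is coprime to every modulus and preserves admissibility of Chen primes. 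Granting (ii), Parseval gives a minor-arc contribution $\ll |A\cap[N]| \cdot \sup_{\mathfrak m}|S(\alpha)| = o\bigl(N/(\log N)^2\bigr)$, hence negligible relative to the major-arc part; this yields intersectivity.

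For prime intersectivity, the same framework is run relative to a Selberg-type pseudorandom majorant $\lambda$ of $\P$, following the transference principle of Green \cite{Green_Roththeoremprime} as adapted by Li--Pan \cite{Li-Pan-differencesets} and Rice \cite{Rice_Sarkozytheorem}: after the standard $W$-trick, the relative-density hypothesis on $A$ becomes an absolute-density statement for $\mathbf{1}_A/\lambda$, and the $L^p$-control and linear/quadratic pseudorandomness of $\lambda$ transfer the minor-arc bound (ii) to the weighted integral with no essential loss, while (i) still controls the major-arc piece.

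The main obstacle is establishing the minor-arc bound (ii) for $S(\alpha)$. Unlike the von Mangoldt function, $\mathbf{1}_{\P_{\mathrm{Chen}}}$ does not admit a clean Vaughan/Heath-Brown identity, so one must first decompose it via Chen's switching trick between the linear and semi-linear sieves and then control the resulting Type I and Type II bilinear exponential sums by Vinogradov-style arguments. All the arithmetic content of Theorem~\ref{thm:Chen_primes_recurrence} sits in this step; once (i) and (ii) are in place, the circle-method computation and the passage to the primes via transference are routine from the existing machinery, and the intersective and prime-intersective statements emerge together.
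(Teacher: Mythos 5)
Your proposal is not the paper's route, and it contains a genuine gap at precisely the point you identify as ``the main obstacle.'' Both of your inputs (i) and (ii) are unavailable for Chen primes, and not merely technically difficult: there is no known asymptotic for $|\P_{\mathrm{Chen}}\cap[N]|$, let alone for Chen primes in reduced residue classes, because $\P_{\mathrm{Chen}}$ is detected only by a \emph{lower-bound} sieve and an asymptotic count would run into the parity barrier. For the same reason the minor-arc estimate $\sup_{\alpha\in\mathfrak m}|S(\alpha)|=o(N/(\log N)^2)$ is open: Chen's switching between the linear and semi-linear sieves produces lower-bound sieve weights, and feeding these into a Vaughan/Heath-Brown-style Type I/Type II decomposition does not yield cancellation in the exponential sum over the \emph{set} $\P_{\mathrm{Chen}}$ (as opposed to over a nonnegative sieve-weighted majorant). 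Since your circle-method computation needs the major-arc main term to be an honest asymptotic (so that the minor arcs are genuinely an error term), and needs pointwise decay of $S(\alpha)$ off the major arcs, the argument cannot be closed as written. The local observation about the shift by $+1$ is correct, but it is not where the difficulty lies.

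The paper avoids this entirely by using the Gowers-norm transference principle of Bienvenu--Shao--Ter\"av\"ainen \cite{Bienvenu-Shao-Teravainen}: after a $W$-trick, the weighted indicator $\theta_{W,-1}$ of (a sub-set of) Chen primes is decomposed as $f_1+f_2$ with $\delta\le f_1\le 2$ pointwise and $\|f_2\|_{U^{k+1}}\le\epsilon$. Positivity then comes not from an asymptotic for Chen primes but from Chen's lower bound feeding into the dense part $f_1$, to which one applies a \emph{uniform} multiple recurrence theorem \cite{Frantzikinakis_Host_Kra13} (for intersectivity, via Furstenberg correspondence) or the Green--Tao theorem \cite{Green_Tao08} (for prime intersectivity), while the uniform part $f_2$ is killed by the generalized von Neumann inequality against a pseudorandom majorant. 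In other words, the paper's method only requires a majorant plus a lower bound of the correct order, which is exactly what Chen's sieve provides; your method requires an asymptotic and Fourier cancellation, which it does not. If you want to salvage a Fourier-analytic version, you would have to replace (i) and (ii) by a restriction-type transference relative to a pseudorandom majorant of the Chen weight (a Fourier analogue of what \cite{Bienvenu-Shao-Teravainen} do in Gowers norms), rather than by classical major/minor arc estimates for $S(\alpha)$.
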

To prove \cref{thm:Chen_primes_recurrence}, we use a transference principle developed by the first author, Shao, and Ter{\"a}v{\"a}inen \cite{Bienvenu-Shao-Teravainen}. Due to a local obstruction, we could not use the same method for $\Pchen - 1$ and so the question whether $\P_{\mathrm{Chen}} - 1$ is intersective is still open.


\subsection{Separating intersective sets and prime intersective sets}

The similarities in known examples of intersective sets and prime intersective sets raise the question of the existence of intersective sets which are not prime intersective. This question was also asked by the third author in his survey \cite{LeThaiHoang_survey}: 
\begin{question}[{\cite[Problem 6]{LeThaiHoang_survey}}]
	Does there exist an intersective set which is not prime intersective?
\end{question} 

In this paper, we give a positive answer to this question in a rather strong way.
To explain the results in details, we first introduce some important classes of subsets of integers: thick sets and syndetic sets.



A set $S \subset \N$ is \emph{thick} if $S$ contains arbitrarily long intervals of the form $\{m, m + 1, \ldots, m+n\}$. On the other hand, $S$ is \emph{syndetic} if $S - F \supset \N$ for some finite set $F$; equivalently, $S$ is syndetic if the gaps between consecutive elements of $S$ are bounded. 
Note that these two classes of sets are dual to each other in the following sense: given a family $X$ of subsets of $\N$, its dual is $X^*=\{E\subset\N : E\cap A\neq\varnothing \text{ for all } A\in X\}$.
When $X$ is upward closed, i.e. any superset of any member of $X$ is again a member of $X$, it is easy to see that $X^{**}=X$.
Therefore, a set is thick if, and only if, it intersects every syndetic set; and a set is syndetic if, and only if, it intersects every thick set. 
We extend the notions of thick sets and syndetic sets to subsets of $\Z$ in the following way: a set $S \subset \Z$ is thick (or syndetic) if $S \cap \N$ is thick (or syndetic).

A folklore result says that every thick set $R$ is intersective.
This follows from the fact that any thick set contains an infinite difference set of the form $A-A$, 
\footnote{\label{footnote:thick}We construct the set $A$ inductively. Suppose we have $A_k=\{a_1<\ldots<a_k\}$ such that $(A_k-A_k)\subset R$. Since $R$ is thick, it contains a translate $-A_k$, say $a_{k+1}-A_k$, with $a_{k+1}>a_k$, which completes the induction step.} and any infinite difference set is intersective.\footnote{If a set $X\subset\N$ has positive upper density, the translates $a_i+X$ cannot all be pairwise disjoint, which shows that $(A-A)\cap (X-X)\neq\varnothing$.} 
As a consequence of Pintz's theorem \cite{Pintz}, we know that $\P-\P$ is syndetic.
However, our next result shows that it is no longer the case if $\P$ is replaced by a subset of lower relative density $1$. As a result, we produce a thick set which is intersective but not prime intersective.

\begin{Maintheorem}
\label{th:prime-elem}
There exists $A \subset \P$ such that $d_{\P}(A) = 1$ and $A - A$ is not syndetic. In particular, $R = \N \setminus (A - A)$ is a thick set (and so an intersective set) but not a prime intersective set.

\end{Maintheorem}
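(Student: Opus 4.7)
My plan is to construct $A \subset \P$ of full relative density whose difference set misses a prescribed sequence of longer and longer intervals. I will fix $M_k := 2^{2^k}$ and $L_k := k$, write $I_k := [M_k, M_k + L_k] \cap \N$ and $I := \bigcup_{k \geq 1} I_k$, and aim to produce $A \subset \P$ with both $(A - A) \cap I = \varnothing$ and $d_\P(A) = 1$. Granted this, since $L_k \to \infty$ the gaps of $A - A$ are unbounded, so $A - A$ is not syndetic, while $R := \N \setminus (A - A)$ contains each $I_k$ and so is thick -- hence intersective by the folklore observation recalled above. Meanwhile, $\overline{d}_\P(A) = 1 > 0$ and $R \cap (A - A) = \varnothing$ together prevent $R$ from being prime intersective.

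To build $A$, I enumerate the primes in increasing order $p_1 < p_2 < \cdots$ and proceed greedily: place $p_i$ into $A$ iff no earlier $p_j \in A$ satisfies $p_i - p_j \in I$. This makes $(A - A) \cap I = \varnothing$ automatic. Every prime $p \in B := \P \setminus A$ is witnessed by some prime $q < p$ and some $m \in I$ with $q + m = p$, so
\[
|B \cap [1, X]| \;\leq\; \sum_{k : M_k \leq X} \, \sum_{m \in I_k} \#\{q \leq X : q,\, q + m \in \P\}.
\]
The essential input is the uniform Brun (or Selberg) sieve bound $\#\{q \leq X : q, q+m \in \P\} \ll \mathfrak{S}(m) X/\log^2 X$, together with the pointwise estimate $\mathfrak{S}(m) = O(\log\log(m+3))$ on the Hardy--Littlewood singular series (saturated, up to constants, on primorials). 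With our choices, at most $O(\log \log X)$ values of $k$ satisfy $M_k \leq X$ and their $L_k$ sum to $O((\log \log X)^2)$, yielding $|B \cap [1, X]| \ll X(\log \log X)^3/\log^2 X = o(\pi(X))$, i.e.\ $d_\P(A) = 1$.

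The main technical obstacle is the uniform sieve bound on prime pairs at a fixed gap, which is classical. Everything else is bookkeeping: $M_k$ must grow fast enough and $L_k$ slow enough that the contribution of bad primes is $o(\pi(X))$, yet $L_k \to \infty$ so that $A - A$ really has unbounded gaps. The specific schedule $M_k = 2^{2^k}$, $L_k = k$ is comfortable but far from tight; many other schedules do the job.
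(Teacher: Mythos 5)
Your proof is correct and follows essentially the same route as the paper: the paper proves a general statement (Proposition \ref{lm:generalisation}) by deleting from $E=\P$ the elements involved in a ``bad pair'' whose difference lies in a prescribed union of long intervals, and then bounds the number of deleted primes using exactly the same Selberg-sieve estimate $\pi_m(X)\ll \mathfrak{S}(m)X/\log^2X$ together with $\mathfrak{S}(m)\ll\log\log m$. Your greedy construction is a streamlined specialization of that argument to the primes (it sidesteps the paper's two-case split between small and large intervals, which is only needed because Proposition \ref{lm:generalisation} is stated for a general ambient set $E$ and an auxiliary function $f$), and all your estimates check out.
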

	
The main ingredient in the proof of \cref{th:prime-elem} is a classical sieve-theoretic bound for the number
$\pi_m(x)$ of primes $p$ smaller than $x$ such that $p+m$ is also a prime.
 In fact, the theorem applies not only to the primes but to a broad range of sparse sets that satisfy 
similar bounds, for example, the images of non-linear polynomials with integer coefficients and the images of those polynomials evaluated at primes. (See \cref{lm:generalisation}.) 

To state our next result, we need to introduce the notions of piecewise syndetic sets and thickly syndetic sets. A set $S \subset \N$ is \emph{piecewise syndetic} if $S = T \cap R$ where $T$ is a thick set and $R$ is syndetic. A set $S \subset \N$ is \emph{thickly syndetic} if $S$ intersects every piecewise syndetic set. Equivalently, $S$ is thickly syndetic if for every $N$, there is a syndetic set $J$ such that $[N] + J \subset S$, i.e. $S$ contains syndetically many copies of intervals of arbitrary length. In particular, a thickly syndetic set is both thick and syndetic. By extension, we say a set $S \subset \Z$ is piecewise syndetic/thickly syndetic if $S \cap \N$ is piecewise syndetic/thickly syndetic.


We currently do not know if the thick set found in \cref{th:prime-elem} can be upgraded to being thickly syndetic.
However, this upgrade is possible if we slightly weaken the hypothesis on the largeness of $A$.

\begin{Maintheorem}
\label{thm:pws}
For every $\epsilon>0$, there is a set $A\subset \P$ of relative density at least $1-\epsilon$ such that $\P-A$ is not piecewise syndetic.  In particular, $R:=\N \setminus (\P-A)$ is thickly syndetic, but not prime intersective.
\end{Maintheorem}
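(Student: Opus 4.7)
The plan is to build $A$ by imposing congruence conditions on primes modulo a sequence of pairwise-coprime primorials $(Q_L)_{L\ge 1}$. Choose a rapidly growing sequence $2=y_0<y_1<y_2<\cdots$ and set $Q_L := \prod_{y_{L-1}<p\le y_L} p$. By Mertens' theorem $\phi(Q_L)/Q_L \sim \log(y_{L-1})/\log(y_L)$, so taking $\log y_L$ to grow doubly exponentially (e.g. $\log y_L \ge C(L+1)2^L\log y_{L-1}/\epsilon$ for an absolute constant $C$) forces $(L+1)\phi(Q_L)/Q_L \le \epsilon_L$ for a sequence $(\epsilon_L)_{L\ge 1}$ with $\sum_L\epsilon_L \le \epsilon/4$.

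The combinatorial core is an averaging argument. Let $D_L\subseteq \Z/Q_L\Z$ consist of those residues $x$ for which $x+k$ is non-coprime to $Q_L$ for every $k\in\{0,\ldots,L\}$; a union bound gives $|D_L|/Q_L \ge 1-(L+1)\phi(Q_L)/Q_L \ge 1-\epsilon_L$. Averaging $|(\Z/Q_L\Z)^*\cap(D_L-j)|$ over $j\in\Z/Q_L\Z$ yields some residue $j_L$, which one lifts to $[y_L,y_L+Q_L)$, together with a set $R_L := (\Z/Q_L\Z)^*\cap(D_L-j_L)$ with $|R_L|\ge(1-\epsilon_L)\phi(Q_L)$, satisfying $\gcd(r+j_L+k,Q_L)>1$ for all $r\in R_L$ and $k\in\{0,\ldots,L\}$.

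Define
\[
A := \bigl\{p\in\P : p\bmod Q_L\in R_L \text{ whenever } p>y_L \text{ and } Q_L\le\sqrt p\bigr\}.
\]
The density bound $d_\P(A)\ge 1-\epsilon$ follows from Brun--Titchmarsh: for each $L$ with $Q_L\le\sqrt X$, the primes $p\le X$ in the at most $\epsilon_L\phi(Q_L)$ excluded residue classes modulo $Q_L$ number at most $2\epsilon_L X/\log(X/Q_L) \ll \epsilon_L\pi(X)$; the remaining $L$ impose no constraint on $[1,X]$; summing and using $\sum_L\epsilon_L\le\epsilon/4$ gives the bound.

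To show $\P-A$ is not piecewise syndetic, fix $L$ and consider $J_L := \{j_L+mQ_L:m\ge 0\}$, an arithmetic progression that is syndetic with maximum gap $\le y_L+Q_L$. Suppose $n\in(J_L+[0,L])\cap(\P-A)$: then $p:=a+n\in\P$ for some $a\in A$, and $p\equiv a+j_L+k\pmod{Q_L}$ for some $k\in\{0,\ldots,L\}$. If $a\ge Q_L^2$, the defining condition on $A$ forces $a\bmod Q_L\in R_L$, so $\gcd(p,Q_L)>1$ and hence $p\le y_L$; but this contradicts $p\ge a\ge Q_L^2>y_L$. Thus only the finite set $S_L:=A\cap[1,Q_L^2]$ can contribute, and a further Brun--Titchmarsh estimate controls the primes of the form $a+j+k$ with $a\in S_L$, $k\in[0,L]$ and $j\in J_L$; past a threshold $X_L$ depending only on $L$, no block of two consecutive elements of $J_L$ is entirely bad, so discarding the bad $j$'s yields a syndetic $J_L^\ast\subset J_L$ with $(J_L^\ast+[0,L])\cap(\P-A)=\varnothing$. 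The main technical obstacle is balancing the density loss from sieving $A$ against the sieve bound needed for the syndeticity of $J_L^\ast$: the doubly exponential growth of $y_L$ is precisely what reconciles these two requirements.
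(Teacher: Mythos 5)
Your argument takes a genuinely different route from the paper's. The paper works abstractly in the Bohr compactification $b\Z$: from the Dressler--Pigno fact that $\overline{\tau(\P)}$ has Haar measure zero, one finds a compact $F\subset b\Z$ with $m_{b\Z}(F)>1-\epsilon$ so that $\overline{\tau(\P)}+F$ has empty interior, notes that return times to a closed set with empty interior can never be piecewise syndetic, and locates the translate $z$ by a Fatou-lemma argument; for $\P$ specifically, the upper relative density of $A$ is upgraded to relative density by invoking that $\P$ is good for the pointwise ergodic theorem. You instead unwind this into an explicit sieve: congruence constraints modulo a sequence of primorials $Q_L$, with Brun--Titchmarsh controlling both the density loss and the bad set at the end. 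This is a valid and essentially elementary alternative. What you gain is explicitness; what you lose is generality: the paper's argument works verbatim for any $E$ with $m_{b\Z}(\overline{\tau(E)})=0$ (polynomials, sums of two squares, norm forms), while your sieve bounds are tied to the primes, or at least to sets with Brun--Titchmarsh-type control.

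There is one concrete error in the final step, and it is not cosmetic. You assert that Brun--Titchmarsh gives a threshold $X_L$ past which ``no block of two consecutive elements of $J_L$ is entirely bad.'' That is not what the sieve gives. The short-interval Brun--Titchmarsh bound shows that the number of bad $m\in[M,M+H]$ is at most $C\,|S_L|(L+1)\,H Q_L/(\phi(Q_L)\log H)$, uniformly in $M$. This is $<H$ only once $\log H$ exceeds roughly $|S_L|(L+1)Q_L/\phi(Q_L)$ --- a quantity you have deliberately arranged to be enormous (you made $Q_L/\phi(Q_L)$ large, and $|S_L|$ can be as big as $Q_L^2/\log Q_L$). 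No fixed small window --- certainly not $H=2$ --- works, and no threshold in $M$ helps, since the short-interval bound is already uniform in $M$. The correct conclusion is that there is $H_L$, depending only on $L$ and the data fixed up to stage $L$, such that every block of $H_L$ consecutive $m$'s contains a good one; the resulting $J_L^*$ is then syndetic with gap at most $H_L Q_L$, which still suffices. This is the fix you want.

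Two smaller things. First, the theorem claims $A$ has relative density at least $1-\epsilon$, not merely lower relative density. Your Brun--Titchmarsh computation directly gives $\underline{d}_\P(A)\ge 1-\epsilon$; to get the honest density, observe that each excluded set $B_L:=\{p\in\P: p>y_L,\ Q_L\le\sqrt p,\ p\bmod Q_L\notin R_L\}$ has a genuine relative density in $\P$ (the modulus $Q_L$ is fixed, so the prime number theorem in progressions applies), while the tail $\bigcup_{L>M}B_L$ has upper relative density $\le 4\sum_{L>M}\epsilon_L\to 0$; hence $d_\P(A)$ exists and is $\ge 1-\epsilon$. Second, when you invoke the defining condition on $A$ for $a\ge Q_L^2$, you need both $a>y_L$ and $Q_L\le\sqrt a$; the latter is $a\ge Q_L^2$ by definition, and the former follows because $Q_L$ is a product of many primes in $(y_{L-1},y_L]$ and so $Q_L>y_L$ --- worth saying explicitly, since otherwise the implication is not formally justified.
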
	

While the proof of \cref{th:prime-elem} uses a quantitative input from number theory regarding the number of bounded gap primes, the proof of \cref{thm:pws} uses a softer approach based on dynamics. More precisely, it uses  minimal rotations on the Bohr compactification $b \Z$ of $\Z$. 

At the cost of replacing ``relative density'' with ``upper relative density,'' \cref{thm:pws} can be extended from $\P$ to any set $E$ whose closure in $b\Z$ has Haar measure zero. This applies to the image of an integer polynomial $\{P(n): n \in \N\}$ where $\deg P \geq 2$, the set of sums of two squares $\{x^2 + y^2: x, y \in \N\}$, and the set of integers represented by a norm form, for example, $\{x^3 + 2y^3 + 4z^3 - 6xyz: x, y, z \in \N\}$.



It is easy to see that if $\overline{d}(A) > 0$, $A - A$ is syndetic. \footnote{Here is a simple proof of this fact: Let $k$ be the largest integer such that there are distinct integers $n_1 < \cdots < n_k$ for which $A + n_i$ are pairwise disjoint. Thus, for any integer $n$, the shift $A + n$ intersects one of the shifts $A + n_i$ and so $n - n_i \in A - A$. It follows that $A - A + \{n_1, \ldots, n_k\} \supset \Z$.} Now if $\underline{d}(E) > 0$ and $A \subset E$ satisfies $\overline{d}_E(A) > 0$, then $\overline{d}(A) > 0$ and so $A - A$ is syndetic.
As a result, it is crucial in Theorems \ref{th:prime-elem} and \ref{thm:pws} that $\underline{d}(\P) = 0$. Given this fact, a natural question is whether having zero lower density is all that is needed to satisfy these two theorems. 
While we do not know the answer to this question for \cref{thm:pws} (see \cref{conj:RelativeDensityOneMinusEps}), we show in \cref{prop:ss'-z} that merely having zero lower density does not suffice for \cref{th:prime-elem}. In fact, we prove something stronger regarding upper Banach density. For $E \subset \Z$, its \emph{upper Banach density} is defined by
\[
    d^*(E) = \lim_{N \to \infty} \sup_{M \in \N} \frac{|E \cap [M, M + N)|} {N}.
\]
Note that we always have $d^*(E) \geq \overline{d}(E) \geq \underline{d}(E)$. \cref{prop:ss'-z} below states that there exists a set $E$ with $d^*(E) = 0$ such that if $\overline{d}_E(A) = 1$, then $A - A = \Z$ (in particular, syndetic).


\subsection{Prime intersective sets must be intersective}

Theorems \ref{th:prime-elem} and \ref{thm:pws} say that there exists an intersective set that is not prime intersective. Interestingly, the converse is false, i.e. every prime intersective set must be intersective. Moreover, the same is true if one replace $\P$ by any infinite subset of $\N$ and  this is the content of the next theorem.

\begin{Maintheorem}\label{thm:e-intersective-implies-n-intersective}
For any infinite $E \subset \N$, every $E$-intersective set is intersective.
\end{Maintheorem}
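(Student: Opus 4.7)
My plan is to argue by contradiction using the dynamical characterization of intersectivity recalled in the introduction. Suppose $R \subset \N$ is $E$-intersective but not intersective; then there exist a measure preserving system $(X, \mathcal{B}, \mu, T)$ and a set $B \in \mathcal{B}$ with $\mu(B) > 0$ such that $\mu(B \cap T^{-r}B) = 0$ for every $r \in R$. The goal is to extract, from the orbit of a suitably chosen point $x \in X$, a subset of $E$ with positive upper $E$-relative density whose difference set is disjoint from $R$, which will contradict the $E$-intersectivity of $R$.

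For each $x \in X$, let $A_x := \{n \in \N : T^n x \in B\}$ be the return time set of $x$ to $B$, and consider the null set $Z := \bigcup_{r \in R} \bigcup_{m \geq 0} T^{-m}(B \cap T^{-r}B)$, which has $\mu(Z) = 0$ as a countable union of null sets. A direct check shows that for every $x \notin Z$, $(A_x - A_x) \cap R = \varnothing$: if $r = n - m \in R$ with $m < n$ both in $A_x$, then $T^m x$ would lie in $B \cap T^{-r}B$, placing $x$ in $Z$. Consequently, it suffices to exhibit $x \notin Z$ with $\overline{d}_E(A_x \cap E) > 0$; the set $B' := A_x \cap E$ will then contradict $E$-intersectivity, since $(B' - B') \cap R \subset (A_x - A_x) \cap R = \varnothing$.

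To produce such an $x$, set $f_N(x) := |A_x \cap E \cap [N]|/|E \cap [N]|$. Fubini and the $T$-invariance of $\mu$ give $\int f_N \, d\mu = \mu(B)$ for every $N$. Since $0 \leq f_N \leq 1$, the reverse Fatou lemma yields $\int \limsup_N f_N \, d\mu \geq \limsup_N \int f_N \, d\mu = \mu(B) > 0$, whence $\overline{d}_E(A_x \cap E) = \limsup_N f_N(x) > 0$ on a set of positive measure. Removing the null set $Z$ still leaves a positive measure set from which to choose the required $x$, completing the proof.

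The step I expect to require the most care is the second one. A naive combinatorial reduction, such as trying to find $k$ with $\overline{d}_E((A - k) \cap E) > 0$ whenever $\overline{d}(A) > 0$, can fail: if $A$ is a union of widely spaced blocks and $E$ sits inside the gaps, every shift of $A$ is eventually disjoint from $E$. Passing to the dynamical picture and averaging over the probability space via Fubini and reverse Fatou is precisely what lets one bypass this obstruction, since it guarantees that at least some orbits visit $B$ along $E$ with positive upper density even when no single deterministic shift of $A$ does so.
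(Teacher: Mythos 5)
Your proof is correct and follows essentially the same route as the paper's: apply Furstenberg's Correspondence Principle to a witness of non-intersectivity, sanitize the measure-zero recurrence (you delete the null set $Z$ of orbits, the paper shrinks the target set $A$ to $A'$ --- equivalent devices), and then use the bounded/reverse Fatou inequality along the enumeration of $E$ to find a point whose return-time set meets $E$ with positive upper relative density. The only difference is that the paper proves the statement in the greater generality of $k$-intersective sets (Proposition 5.2), of which your argument is the $k=1$ case.
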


\cref{thm:e-intersective-implies-n-intersective} follows from a more general result regarding sets of multiple recurrence (see  \cref{thm:recurrence_with_N}). The idea is to use Furstenberg's Correspondence Principle \cite{Furstenberg77} to recast the problem into a question about of sets of recurrence. We then utilize Fatou's lemma to show that a set which is not a set of recurrence (for $\N$) cannot be a set of recurrence for $E$.

\subsection{Chromatic intersectivity versus density intersectivity}

As usual in many additive combinatorial problems, 
besides a density-based notion of intersectivity, there is also a partition-based notion.
\begin{definition}
Given $E\subset\N$, a set $R \subset \N$ is said to be \emph{chromatically $E$-intersective} if for every finite partition $E=\bigcup_{i=1}^k E_i$, there exists $i$ such that $R \cap (E_i-E_i)\neq\varnothing$.
\end{definition}
Equivalently, $R$ is chromatically $E$-intersective if for any finite coloring of $E$, there are distinct $m, n$ of the same color such that $m - n \in R$.

If $E=\N$, we simply say that $R$ is \emph{chromatically intersective}. In dynamical systems language, $R$ is chromatically intersective if and only if for any minimal topological dynamical system $(X, T)$ \footnote{A \emph{topological dynamical system} is a pair $(X, T)$ where $X$ is a compact Hausdorff space and $T: X \to X$ is a continuous map. $(X, T)$ is \emph{minimal} if for every $x \in X$, the orbit $\{T^n x: n \in \N\}$ is dense in $X$.} and any nonempty open set $U \subset X$, there exists $n \in R$ such that $U \cap T^{-n} U \neq \varnothing$.
Due to this characterization, a chromatically intersective set is also called a \emph{set of topological recurrence} (in contrast to measurable recurrence as defined in \eqref{eq:set_of_recurrence_def}).


In any partition $\N = \bigcup_{i=1}^{\ell} A_i$, one of the $A_i$ has positive upper density, and so an intersective set is always chromatically intersective. Therefore, $\N, \{n^2: n \in \N\}, \{n^2 - 1: n \in \N\}, \P - 1, \P + 1$ are chromatically intersective. On the other hand, K{\v{r}}{\'{\i}}{\v{z}} \cite{Kriz87} proved that there exists a chromatically intersective set which is not intersective.

For a similar reason, for any $E \subset \N$, an $E$-intersective set is chromatically $E$-intersective. From K{\v{r}}{\'{\i}}{\v{z}}'s example, it is natural to ask whether there exists a chromatically $E$-intersective set which is not $E$-intersective. Our next result confirms this is the case. In fact, \cref{thm:chromatic_E_not_density} below strengthens K{\v{r}}{\'{\i}}{\v{z}}'s example by showing that for any subset $E \subset \N$, there exists a chromatically $E$-intersective set which is not intersective. Calling this set $R$, \cref{thm:e-intersective-implies-n-intersective} implies that $R$ is not $E$-intersective and as a result, $R$ is an example of a chromatically $E$-intersective set which is not $E$-intersective.

\begin{Maintheorem}\label{thm:chromatic_E_not_density}
    For any infinite $E \subset \N$, there exists a chromatically $E$-intersective set which is not intersective (and thus not $E$-intersective).
\end{Maintheorem}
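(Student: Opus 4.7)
The plan is to construct $R \subset \N$ that is chromatically $E$-intersective but not intersective; by \cref{thm:e-intersective-implies-n-intersective}, such an $R$ will then automatically fail to be $E$-intersective.

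I would begin with a dynamical reformulation of chromatic $E$-intersectivity, in the spirit of Furstenberg's correspondence for chromatic intersectivity on $\N$. A coloring $\chi: E \to [k]$ lifts to a point $\omega_\chi \in ([k] \cup \{\star\})^{\Z}$ via $\omega_\chi(n) = \chi(n)$ for $n \in E$ and $\omega_\chi(n) = \star$ otherwise. Passing to a minimal subsystem $M_\chi$ of the orbit closure of $\omega_\chi$ under the shift $T$, one checks that a monochromatic pair $e < e' \in E$ with $e' - e \in R$ corresponds, after passing to a limit point, to a return time of the cylinder $U_c = \{y \in M_\chi : y(0) = c\}$ for some non-star color $c$. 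In this way $R$ is chromatically $E$-intersective if and only if $R \cap N(U_c, U_c) \neq \varnothing$ for every such $(M_\chi, T)$ and every non-star color $c$ with $U_c \neq \varnothing$, where $N(U_c, U_c) = \{n \in \N : U_c \cap T^{-n} U_c \neq \varnothing\}$.

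With this reformulation in hand, the next step is to adapt K{\v{r}}{\'{\i}}{\v{z}}'s original construction \cite{Kriz87} of a chromatically intersective set on $\N$ that is not intersective. K{\v{r}}{\'{\i}}{\v{z}} builds his set as $R_0 = \bigcup_k R_0^{(k)}$, where each finite $R_0^{(k)}$ is designed via a Kneser-type graph so that $\Cay(\Z, R_0^{(k)})$ has chromatic number greater than $k$, while simultaneously an independent set $A \subset \N$ of positive density is maintained to witness non-intersectivity. In the relativized setting I would modify the construction so that each $R_k \subset E - E$ and every $k$-coloring of a sufficiently long initial segment of $E$ has a monochromatic pair with difference in $R_k$; this uses the infinitude of $E$ to embed Kneser-type obstructions within the difference set $E - E$. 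Running in parallel, I would maintain a nested sequence of dense sets $A_k$ satisfying $(A_k - A_k) \cap R_k = \varnothing$, so that the limit $A := \bigcap_k A_k$ has $\overline{d}(A) > 0$ and $(A - A) \cap R = \varnothing$.

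The hardest part will be keeping the balance at each inductive stage: the chromatic requirement demands enough differences from $E - E$ to enter $R_k$ to defeat $k$-colorings of $E$, while the density requirement restricts $R_k$ to lie outside $A_{k-1} - A_{k-1}$. This is the same tension as in K{\v{r}}{\'{\i}}{\v{z}}'s original argument, but made more delicate by the constraint $R_k \subset E - E$: if $E$ is sparse, $E - E$ is itself thin and offers few candidate differences. The resolution should exploit the infinitude of $E$ to produce arbitrarily many differences, which by careful selection realize the Kneser-type obstruction at level $k$ while remaining disjoint from the structured forbidden set $A_{k-1} - A_{k-1}$.
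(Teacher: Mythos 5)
Your high-level strategy is in the right spirit and does match the paper: both adapt K{\v{r}}{\'{\i}}{\v{z}}'s Kneser-graph construction and then deal with the tension between defeating $k$-colorings of $E$ and preserving a dense set $A$ whose difference set avoids $R$. But there are two concrete gaps in your outline.

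First, the step $A := \bigcap_k A_k$ having $\overline{d}(A) > 0$ does not follow. A nested decreasing sequence of sets, each of positive upper density, can have empty intersection (take $A_k = [k, \infty)$). The paper avoids this by building an \emph{increasing} sequence of \emph{finite} sets $C_1 \subset C_2 \subset \cdots$, with each $C_k \subset [m_k]$ satisfying $|C_k| > \delta m_k$ and $(C_k - C_k) \cap S_k = \varnothing$; then $C := \bigcup_k C_k$ has $\overline{d}(C) \geq \delta$ because the density inequality is witnessed along the sequence $m_k$. To keep the densities from decaying to zero under the concatenation, the argument carefully manages the parameters $\delta_k > \delta$ and $\eta < 1/2$ so that each stage only loses a factor $2\delta_k\eta > \delta$ (this is Lemma~\ref{lm:threesix}). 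Your nested-intersection scheme needs to be replaced by this increasing-union scheme to close the gap.

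Second, you gesture at "exploiting the infinitude of $E$ to embed Kneser-type obstructions within $E - E$," but the actual mechanism is nontrivial and is the heart of the relative version. The paper picks $\alpha \in \T^d$ such that $E\alpha$ is dense in $\T^d$ (possible for any infinite $E$; Lemma~\ref{lm:sixtwo}), sets $\rho(n) = n\alpha$, and uses density to embed a finite subgraph of the continuous Cayley graph $\Cay(H_{2k+1}(\mathbf{1/2}) + V_\epsilon)$ — whose chromatic number is $\geq 2k+1$ by Lov\'asz — isomorphically into $\Cay(E, \rho^{-1}(U) \cap (E-E))$ (Lemma~\ref{lem:transfer_to_discrete}). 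This is what makes the Kneser obstruction live inside $E - E$. Your worry that "$E-E$ is thin and offers few candidate differences" is actually dissolved by this equidistribution argument: density of $E\alpha$ in $\T^d$ holds with no assumption on the sparsity of $E$. Your proposed dynamical reformulation via $([k]\cup\{\star\})^\Z$ is not needed, and is also somewhat delicate — passing to a minimal subsystem of the orbit closure of a coloring of $E$ does not cleanly retain the constraint that return times come from $E - E$; the direct combinatorial/Cayley-graph route is cleaner and is what the paper uses. Once you replace your two missing pieces with these tools, you essentially recover the paper's proof.
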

The proof of \cref{thm:chromatic_E_not_density} is based on a recent refinement of the third author on K{\v{r}}{\'{\i}}{\v{z}}'s theorem \cite{Kriz87}. 
Using \cref{thm:chromatic_E_not_density}, we can find a set $R$ that separates chromatic intersectivity and density intersectivity of any infinite $E$. Nevertheless, it is hard to extract from the construction any combinatorial properties of $R$. In contrast, in the special case when $E = \P$, we can take $R$ to be thickly syndetic. Indeed, \cref{thm:pws} says that there is a thickly syndetic set $R$ which is not prime intersective; on the other hand, our next theorem states that every thick set is chromatically prime intersective. Therefore, the thickly syndetic set $R$ found in \cref{thm:pws} is chromatically prime intersective but not prime intersective.

\begin{Maintheorem}
\label{prop:piecewiseSyndetic}
For any finite partition $\P=\bigcup_{i=1}^rE_i$, the union $\bigcup_{i=1}^r (E_i - E_i)$ is syndetic. Equivalently, every thick set is chromatically prime intersective.
\end{Maintheorem}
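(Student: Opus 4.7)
\emph{Proof Plan.}
The approach is to combine Maynard's theorem on prime $k$-tuples with pigeonhole on the coloring. Recall Maynard's theorem: for every $m \in \N$ there exists $K = K(m)$ such that, for any admissible $K$-tuple of integers $\mathcal{H}$, infinitely many $n$ satisfy $|\{h \in \mathcal{H} : n + h \in \P\}| \geq m$. First I would apply this with $m = r + 1$ and pigeonhole on the $r$ color classes: for every admissible $K(r+1)$-tuple $\mathcal{H}$, there are infinitely many $n$ admitting $h_1 \neq h_2 \in \mathcal{H}$ and $j \in \{1, \ldots, r\}$ with $n + h_1, n + h_2 \in E_j$. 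A secondary pigeonhole over the finitely many triples $(h_1, h_2, j)$ lets us fix such a triple, concluding $h_1 - h_2 \in E_j - E_j \subset \bigcup_i (E_i - E_i)$. In particular, for every admissible $K(r+1)$-tuple $\mathcal{H}$, the difference set $\mathcal{H} - \mathcal{H}$ meets $\bigcup_i (E_i - E_i)$.

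To promote this to syndeticity of $\bigcup_i (E_i - E_i)$, I would construct, for each sufficiently long interval $I \subset \N$, an admissible $K(r+1)$-tuple $\mathcal{H}$ whose pairwise difference set meets $I$ in a controlled way that forces the monochromatic pair guaranteed above to produce a difference lying in $I$. A natural candidate is an admissible arithmetic progression with primorial step, $\mathcal{H} = \{0, q, 2q, \ldots, (K - 1) q\}$ with $q$ a primorial large enough to ensure admissibility: the pairwise differences are then multiples of $q$, and whenever $|I| \geq q$ the interval $I$ necessarily contains such a multiple. Iterating over appropriate choices of $q$ and shifts, and combining with a quantitative form of Maynard's theorem (e.g., the Maynard--Tao--Polymath refinements that count $n$ with multi-prime constellations with positive proportion), one then localizes the monochromatic difference inside $I$.

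The hard part will be this final localization step: guaranteeing that the specific monochromatic pair singled out by Maynard's theorem yields a difference lying in the target interval, rather than in some extraneous short range within $\mathcal{H} - \mathcal{H}$. This requires a delicate combination of admissible tuple constructions with iterated pigeonhole; the resulting syndeticity constant depends only on $r$, through $K(r+1)$ and the associated primorial $q$. Alternatively, one might approach the problem via topological dynamics on the orbit closure (in $\{1,\ldots,r\}^\Z$) of the prime-indexed coloring $k \mapsto c(p_k)$, extracting a uniformly recurrent point proximal to the coloring and reading off a syndetic subset of some $E_j - E_j$; the sparsity of $\P$ and the translation between index-gaps and prime-gaps introduces additional technical complications in that route.
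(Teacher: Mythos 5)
Your first step is exactly right and matches the paper: apply Maynard's theorem with threshold $r+1$ and pigeonhole to conclude that for any admissible $K(r+1)$-tuple $\mathcal{H}$, the difference set $\mathcal{H}-\mathcal{H}$ meets $\bigcup_i (E_i - E_i)$. However, your proposed ``localization'' step --- constructing, for each target interval $I$, an admissible tuple (say a primorial-step AP) whose differences land in $I$, and then forcing the monochromatic pair to produce a difference in $I$ --- has a genuine gap, which you yourself flag. Even if $\mathcal{H}-\mathcal{H}$ contains an element of $I$, Maynard's theorem gives you no control over \emph{which} pair in $\mathcal{H}$ is monochromatic; the pair you get could yield a small difference that misses $I$ entirely. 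Iterating pigeonhole over $q$ and shifts does not repair this, because the interval you care about changes while the ``bad'' differences persist.

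The paper sidesteps this entirely by upgrading to a $\Delta_m^*$-statement rather than chasing differences into prescribed intervals. Recall that a set is $\Delta_m^*$ if it meets $S-S$ for \emph{every} $|S|=m$, and any $\Delta_m^*$-set is automatically syndetic (with index $\leq m-1$, by a short greedy argument). The key missing ingredient in your proposal is a lemma of Huang--Wu: \emph{every} set $A$ of $m$ integers contains an admissible subset $B$ of size $k \gg m/\log m$ --- one simply sieves out a residue class mod $p$ for each prime $p \leq k$. With this in hand, take $m \gg r^3 e^{4r}$; then $|B| \gg r^2 e^{4r}$ is large enough for the Maynard--Tao theorem to produce infinitely many $x$ with $\geq r+1$ primes in $x+B$, two of which share a color, so $(B-B)\cap\bigcup_i(E_i-E_i) \neq \{0\}$ and hence $(A-A)\cap\bigcup_i(E_i-E_i)\neq\{0\}$. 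This gives the $\Delta_m^*$ property (with explicit $m \ll r^3 e^{4r}$), and syndeticity follows immediately --- no interval-localization of any kind is needed. Your dynamical alternative is similarly underdeveloped and faces the same obstruction of translating index-gap control into value-gap control.
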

Since piecewise syndetic sets are partition regular\footnote{That is,
if $k\in\N$ and $A_1,\ldots,A_k\subset\N$ have the property that
$\bigcup_{i=1}^kA_i$ is piecewise syndetic, then one of the $A_i$ is piecewise syndetic.
A proof of this standard fact can be found in \cite[Lemma 1]{brown} or \cite[Theorem 1.24]{Furstenberg81}.}, for any partition $\P=\bigcup_{i=1}^kE_i$, there exists $i\in \{1, \ldots, k\}$ such that $E_i-E_i$ is piecewise syndetic. It remains unclear whether there is $i$ such that $E_i - E_i$ is syndetic.
The proof of \cref{prop:piecewiseSyndetic} relies on Maynard and Tao's famous results \cite{Maynard15} on the Hardy-Littlewood prime tuple conjecture.
In fact, Theorem \ref{prop:piecewiseSyndetic} follows from the more general Theorem \ref{th:admissible} below 
which says that for any set $E$ which satisfies some ``finite-tuple'' property, every thick set is chromatically $E$-intersective. This applies to a broad range of sets, such as random sets, various
subsets of the primes, and the set of sums of two squares.


One of the crucial 
properties of $\P$ that is used in 
\cref{prop:piecewiseSyndetic} is that there are infinitely many pairs of bounded gap primes. The situation is different for a set whose gaps between consecutive elements tends to infinity. 

\begin{Maintheorem}\label{th:squares}
    If $E = \{n_1 < n_2 < \ldots \} \subset \N$ satisfies $\lim_{i \to \infty} (n_{i+1} - n_i) = \infty$, then there is a partition $E = E_1 \cup E_2$ such that $(E_1 - E_1) \cup (E_2 - E_2)$ is not syndetic. In particular, there exists a thick set which is not chromatically $E$-intersective.
\end{Maintheorem}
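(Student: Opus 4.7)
The plan is to produce a partition $E = E_1 \cup E_2$ and a sequence of intervals $I_k \subset \N$ with $|I_k| = k \to \infty$ such that every pair $(n_i, n_j)$ with $n_j - n_i \in I_k$ is bichromatic; this immediately forces $(E_1 - E_1) \cup (E_2 - E_2)$ to be non-syndetic, and exhibits $R := \N \setminus ((E_1 - E_1) \cup (E_2 - E_2))$ as a thick set disjoint from $E_i - E_i$ for $i = 1, 2$, hence not chromatically $E$-intersective. The essential quantitative input is that $g_i := n_{i+1} - n_i \to \infty$ forces $E$ to have upper Banach density zero: if $E$ had $cN$ elements in an interval of length $N$ for some fixed $c > 0$ and arbitrarily large $N$, then at least $cN/2$ of the consecutive gaps in that cluster would be $\leq 2/c$, contradicting finiteness of $\{i : g_i \leq 2/c\}$. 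Consequently, any finite union of shifts $E - n$ and $n - E$ also has Banach density zero, so its complement in $\N$ contains arbitrarily long intervals.

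I construct $I_k$ and $V_k := V(G_{I_k})$ inductively, where $G_{I_k}$ is the graph on $E$ with edges $\{(n_i, n_j) : n_j - n_i \in I_k\}$, maintaining the invariants that $V_1, \ldots, V_k$ are finite and pairwise disjoint and that each $G_{I_\ell}$ is a disjoint union of finite paths. At stage $k$, let $N_k$ be minimal with $g_i > k$ for $i \geq N_k$, and set $W_k := V_1 \cup \cdots \cup V_{k-1} \cup \{n_1, \ldots, n_{N_k}\}$, which is finite. Since $F_k := \bigcup_{n \in W_k} \bigl((E - n) \cup (n - E)\bigr)$ has Banach density zero, I can pick $m_k > k$ so large that $I_k := [m_k, m_k + k] \subset \N \setminus F_k$. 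By construction no element of $W_k$ has a partner via $I_k$, so $V_k \subset \{n_i : i > N_k\}$ and $V_k$ is disjoint from every prior $V_\ell$. Finiteness of $V_k$ follows because only finitely many $i > N_k$ can have $g_i \leq m_k + k$, and for each such $i$ the window $[n_i + m_k, n_i + m_k + k]$ contains finitely many points of $E$. Maximum degree at most $2$ in $G_{I_k}$ follows from $g_j > k$ for $j > N_k$. To rule out cycles, let $n_{i^*}$ be the vertex of smallest value in a supposed cycle; its two cycle-neighbors $n_{j_1} < n_{j_2}$ both lie in $[n_{i^*} + m_k, n_{i^*} + m_k + k]$, so $n_{j_2} - n_{j_1} \leq k$; but $j_1 > N_k$ gives $g_{j_1} > k$, forcing $n_{j_2} \geq n_{j_1 + 1} > n_{j_1} + k$, a contradiction. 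So $G_{I_k}$ is a disjoint union of finite paths.

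At the end of the construction, $G := \bigsqcup_k G_{I_k}$ is a disjoint union of finite paths on pairwise disjoint vertex sets, hence bipartite. Fix any proper 2-coloring of $V(G)$ and extend it arbitrarily to $\chi : E \to \{1, 2\}$. Then every pair $(n_i, n_j)$ with $n_j - n_i \in I_k$ is an edge of $G_{I_k}$, so it is bichromatic and $I_k \cap ((E_1 - E_1) \cup (E_2 - E_2)) = \emptyset$. Since $|I_k| \to \infty$, this proves the theorem. The main obstacle is the simultaneous choice of $m_k$: it must push $I_k$ far enough to leave all previously engaged vertices undisturbed and also into the large-gap regime that kills cycles in $G_{I_k}$; both requirements reduce uniformly to avoiding a Banach density zero set, which is exactly what the assumption $g_i \to \infty$ delivers.
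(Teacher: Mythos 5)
Your proof is correct and follows essentially the same strategy as the paper's: build a thick union of intervals $\bigcup_k I_k$ positioned so that the graph on $E$ whose edges are the pairs differing by an element of that union is a forest, then take a proper $2$-coloring of it as the partition. The only real difference is in the bookkeeping --- the paper shows this graph has degeneracy $1$ (each $a\in E$ has at most one smaller partner) by taking the left endpoints of the intervals to lie in $E$ and grow fast relative to the gap function, whereas you arrange for the graphs $G_{I_k}$ to be disjoint unions of finite paths with pairwise disjoint supports, placing the intervals via the correct observation that divergent gaps force $d^*(E)=0$.
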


\cref{th:squares} applies, for instance, to $E=\{P(n):n\in\N\}$ where
$P$ is a polynomial of degree at least $2$ and $E = \{ \lfloor n^{1 + \epsilon} \rfloor: n \in \N\}$ for any $\epsilon > 0$. 

\vspace{0.5cm}

\textbf{Diagrams.} Below is a diagram of relations between thick sets, intersective sets, prime intersective sets and their chromatic counterparts. The implications we prove are marked with thick arrows; strike-out arrows mean ``do not imply.''

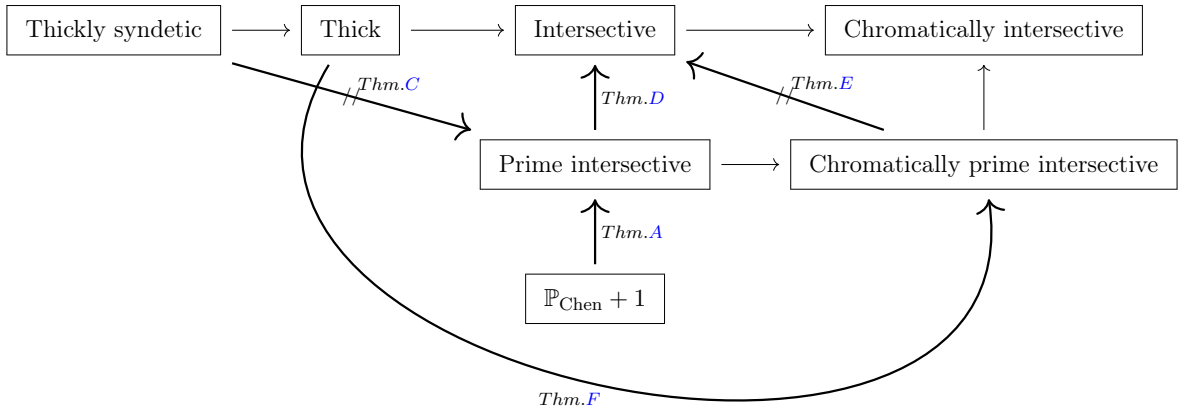
\begin{figure}[ht]
    \centering
\adjustbox{scale=0.85,center}{
    \begin{tikzcd}[row sep= large]
        \fbox{\begin{tabular}{c}
             Thickly syndetic
        \end{tabular}}
        \ar[r,rightarrow] 
        \ar[drr, rightarrow, "//" anchor=center, line width = 1pt, "Thm. \ref{thm:pws}"] & 
        \fbox{\begin{tabular}{c}
             Thick
        \end{tabular}}
        \ar[r, rightarrow] 
        \ar[drr, rightarrow, bend right = 110, looseness=1.4,  line width = 1pt, "Thm. \ref{prop:piecewiseSyndetic}", swap] & 
        \fbox{\begin{tabular}{c}
             Intersective
        \end{tabular}} 
        \ar[r, rightarrow] 
        \ar[d, leftarrow, line width = 1pt, "Thm. \ref{thm:e-intersective-implies-n-intersective}"]& 
        \fbox{\begin{tabular}{c}
            Chromatically intersective
        \end{tabular}} 
        \ar[d, leftarrow]
        \\
        & &
        \fbox{\begin{tabular}{c}
             Prime intersective
        \end{tabular}}
        \ar[r, rightarrow]
        &
        \fbox{\begin{tabular}{c}
        Chromatically prime intersective
        \end{tabular}}
        \ar[ul, rightarrow, "//" anchor=center, line width = 1pt, "Thm. \ref{thm:chromatic_E_not_density}", swap]
        \\
        & & 
        \fbox{\begin{tabular}{c}
             $\P_{\mathrm{Chen}} + 1$
        \end{tabular}}
        \ar[u, rightarrow, line width = 1pt, "Thm. \ref{thm:Chen_primes_recurrence}", swap]
    \end{tikzcd}
    }

    \vspace{-1.3cm}
    \caption{Relations between thick sets, (chromatically) intersective sets, and (chromatically) prime intersective sets.}
    \label{figure:relations_intersective_prime}
\end{figure}

\medskip

The next diagram is the relations between intersective sets and $E$-intersective sets for arbitrary subset $E \subset \N$.

\medskip

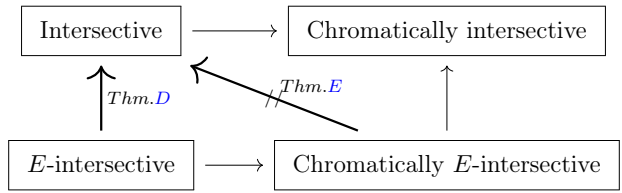
\begin{figure}[ht]
    \centering
\adjustbox{scale=0.85,center}{
    \begin{tikzcd}[row sep= large]
        \fbox{\begin{tabular}{c}
             Intersective
        \end{tabular}} 
        \ar[r, rightarrow] 
        \ar[d, leftarrow, line width = 1pt, "Thm. \ref{thm:e-intersective-implies-n-intersective}"]& 
        \fbox{\begin{tabular}{c}
            Chromatically intersective
        \end{tabular}} 
        \ar[d, leftarrow]
        \\
        \fbox{\begin{tabular}{c}
            $E$-intersective
        \end{tabular}}
        \ar[r, rightarrow]
        &
        \fbox{\begin{tabular}{c}
        Chromatically $E$-intersective
        \end{tabular}}
        \ar[ul, rightarrow, "//" anchor=center, line width = 1pt, "Thm. \ref{thm:chromatic_E_not_density}", swap]
    \end{tikzcd}
    }

    \caption{Relations between (chromatically) intersective sets and (chromatically) $E$-intersective sets for arbitrary $E \subset \N$.}
    \label{figure:relations_intersective_arbitrary_E}
\end{figure}

\medskip

\textbf{Outline of the paper.} 
We prove main theorems in the sequential order: Theorems \ref{thm:Chen_primes_recurrence}, \ref{th:prime-elem}, \ref{thm:pws}, and \ref{thm:e-intersective-implies-n-intersective} are proved in Sections \ref{sec:multiple_recurrence_shifted_checn}, \ref{sec:quantitative-approach},  \ref{sec:proof_of_bohr_compactification}, and \ref{sec:converse-prime-intersective-imply-n-intersective}, respectively. \cref{thm:chromatic_E_not_density} follows from \cref{th:griesmer} which was essentially proved in \cite{Griesmer-separating-topological-recurrence-measurable}. All we have to do is to go through the proof of \cite[Theorem 1.2]{Griesmer-separating-topological-recurrence-measurable} and verify the validity of its relative version. We carry out this tedious task in \cref{sec:appendix_chromatic_not_density}. The rest of \cref{sec:chromatic_vs_density}
is devoted to the proofs of Theorems \ref{prop:piecewiseSyndetic} and \ref{th:squares}. 
\cref{sec:open_questions} contains some open questions that natural arise from our study. Lastly, \cref{sec:closure_measure_zero} supplements \cref{thm:pws} by providing a list of subsets of $\N$ whose closures have zero Haar measure in $b\Z$.
\medskip

\textbf{Acknowledgement.}
The first author is supported by FWF Grant I-5554. The third author is supported by an AMS-Simons Travel Grant. The fourth author is supported by NSF Grant DMS-2246921 and a Travel Support for Mathematicians gift from the Simons Foundation.

\section{Shifted Chen primes}

\label{sec:multiple_recurrence_shifted_checn}

\subsection{Preliminaries}

The goal of \cref{sec:multiple_recurrence_shifted_checn} is to prove \cref{thm:Chen_primes_recurrence}. First we recall some terminology and results from a recent paper by Bienvenu, Shao, and Ter{\"a}v{\"a}inen \cite{Bienvenu-Shao-Teravainen}. 

Let $\P_{\mathrm{Chen}}'$ denote the set
\[
    \{p \in \P: p + 2 \text{ is a prime or a product of two primes $p_1 p_2$ with } p_1, p_2 \geq p^{1/10}\}.
\]
Note that $\P_{\mathrm{Chen}}'$ is a subset of $\P_{\mathrm{Chen}} = \{p \in \P: p + 2 \text{ is a prime or a product of two primes}\}$.
We define a weighted indicator function of $\P_{\mathrm{Chen}}'$ to be
\[
    \theta(n) := (\log n)^2 1_{\mathbb{P}'_{\mathrm{Chen}}}(n) 1_{{p|n(n+2)} \implies p \geq n^{1/10}}.
\]
Chen's theorem \cite{Chen-OnRepresentation} says:
\[
    \sum_{n \in [N]} \theta(n) \gg N.
\]
Here, for two functions $f, g: \R_{> 0} \to \R_{>0}$, $f(N) \gg g(N)$ means there exists a positive constant $C$ such that $f(N) \geq C g(N)$ for sufficiently large $N$.
Alternatively, we also use the notation $g(N)\ll f(N)$ for the same meaning.
For any $W \in \N$, the set of Chen primes is heavily biased towards the congruence classes that are coprime to $W$. To remove this local obstruction, we use a standard procedure called ``W-trick''. For any $w \in \N$, let $W = W(w) = \prod_{p \leq w, p \in \P}p$. For $b \in [W]$ such that $(b, W) = 1$, define
\begin{equation}
\label{eq:w-trick-defined}
    \theta_{W, b}(n) = \left( \frac{\phi(W)}{W} \right)^2 \theta(Wn + b).
\end{equation}
Since our focus will be on $\P_{\mathrm{Chen}} + 1$, we will restrict $b$ to $-1$.

For a function $f: S \to \C$ on a finite set $S$, let $\E_{n \in S} f(n)$ denote the average 
\[
    \frac{1}{|S|} \sum_{n \in S} f(n).
\]
For a function $f: \Z_N := \Z/N \Z \to \C$ and $k \in \N$, the \emph{Gowers $U^k$-norm} of $f$ is defined by
\[
    \lVert f \rVert_{U^k(\Z_N)} = \left( \E_{x \in \Z_N} \E_{\underline{h} \in (\Z_N)^k} \prod_{\underline{\omega} \in \{0, 1\}^k} \mathcal{C}^{|\underline{\omega}|} f(x + \underline{\omega} \cdot \underline{h})  \right)^{1/2^k}
\]
where $\mathcal{C}$ means taking complex conjugates, $|\underline{\omega}| = \sum_{i \in [k]} \omega_i$, and $\underline{\omega} \cdot \underline{h}$ is the dot product of $\underline{w}$ and $\underline{h}$.

We have the following proposition from \cite{Bienvenu-Shao-Teravainen}.
\begin{proposition}[\cite{Bienvenu-Shao-Teravainen}]
\label{thm:theta_decomposed}
There exists a constant $\delta > 0$ such that the following holds: For any $k \in \N, \epsilon > 0$ and a sufficiently large $N = N(\epsilon)$, letting $w = \log \log N, W = \prod_{p < w, p \in \P} p$, and $\theta_{W, -1}$ be defined as in \eqref{eq:w-trick-defined}, we can decompose
\[
    \theta_{W, -1} = f_1 + f_2 \text{ on } [N]
\]
such that $\delta \leq f_1 \leq 2$ pointwise and $\lVert f_2 \rVert_{U^{k+1}(\Z_N)} \leq \epsilon$. 
\end{proposition}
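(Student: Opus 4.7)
The plan is to implement the Green--Tao--Ziegler transference framework in the form used by Bienvenu--Shao--Ter\"av\"ainen \cite{Bienvenu-Shao-Teravainen}: (i) construct a $U^{k+1}$-pseudorandom majorant for the Chen-prime weight $\theta_{W,-1}$; (ii) invoke a dense model theorem to obtain a bounded $U^{k+1}$-approximation to $\theta_{W,-1}$; (iii) upgrade this approximation to one bounded pointwise below by an absolute constant, using the mean lower bound supplied by Chen's theorem.

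For Step (i) I would construct a truncated-divisor-sum weight $\nu:[N]\to\R_{\ge 0}$ of GPY/Maynard type, supported on integers $n$ such that both $Wn-1$ and $Wn+1$ are free of prime factors below $N^{\eta_k}$, and normalised so that a constant multiple of $\theta_{W,-1}$ is pointwise dominated by $\nu$. The $W$-trick \eqref{eq:w-trick-defined} removes local obstructions at small primes, while the definition of $\nu$ ensures that the sieve weight encodes the Chen-prime condition. Verifying the linear forms condition and the correlation condition at level $U^{k+1}$ for $\nu$ is the main obstacle: it reduces, via standard sieve-theoretic moment estimates, to controlling sums $\sum_{n\in[N]}\prod_i\nu(L_i(n))$ for bounded-complexity systems of affine linear forms. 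The restriction $p_1,p_2\ge p^{1/10}$ in the definition of $\mathbb{P}'_{\mathrm{Chen}}$ is crucial here, because it keeps the underlying Selberg sieve of bounded dimension and makes the main terms tractable; this is precisely why one works with $\mathbb{P}'_{\mathrm{Chen}}$ rather than with the full set $\P_{\mathrm{Chen}}$.

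For Step (ii), once pseudorandomness of $\nu$ is secured, the Green--Tao--Ziegler dense model theorem (or its energy-increment variant of Conlon--Fox--Zhao) produces, for any auxiliary $\eta>0$ and all sufficiently large $N$, a function $g:\Z_N\to[0,2]$ with $\|\theta_{W,-1}-g\|_{U^{k+1}(\Z_N)}\le \eta$. Chen's theorem combined with \eqref{eq:w-trick-defined} yields $\E_{n\in[N]}\theta_{W,-1}(n)\ge 2\delta$ for an absolute $\delta>0$, and hence $\E g\ge 2\delta-\eta$. For Step (iii), setting $f_1:=\max(g,\delta)$ and $f_2:=\theta_{W,-1}-f_1$ gives $\delta\le f_1\le 2$ pointwise; since $|f_1-g|\le \delta$ pointwise, the trivial bound $\|\cdot\|_{U^{k+1}}\le \|\cdot\|_\infty$ yields $\|f_1-g\|_{U^{k+1}}\le \delta$ and therefore $\|f_2\|_{U^{k+1}}\le \eta+\delta$. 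Running Step (ii) with $\eta<\epsilon-\delta$ (which is permissible once $\delta$ is chosen small enough at the outset) completes the decomposition; the edge case of very small $\epsilon$ is handled by a refined Koopman--von Neumann decomposition against a $\sigma$-algebra generated by dual functions of the Gowers-anti-uniform components of $\theta_{W,-1}$, using that $\E(\nu\mid\mathcal{B})$ is uniformly close to $1$ by pseudorandomness. The technical heart of the whole argument is Step (i); Steps (ii) and (iii) are essentially black-box invocations of standard transference machinery.
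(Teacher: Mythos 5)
Your overall route --- pseudorandom majorant for the $W$-tricked Chen weight, dense model theorem, then a pointwise lower bound on the structured part --- is the same transference machinery the paper relies on; the paper itself simply cites \cite[Propositions 3.9, 4.2 and 5.2]{Bienvenu-Shao-Teravainen}, which package exactly your Steps (i)--(iii). Steps (i) and (ii) of your sketch are fine as an outline of what those cited results contain.

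Step (iii), however, has a genuine gap, and it is not an ``edge case.'' The quantifiers in the proposition are: there exists $\delta>0$ such that for \emph{all} $\epsilon>0$ the decomposition exists. So $\delta$ is fixed first and $\epsilon$ may be taken smaller than $\delta$. Your truncation $f_1:=\max(g,\delta)$ perturbs $g$ by up to $\delta$ in $L^\infty$, hence only gives $\lVert f_2\rVert_{U^{k+1}}\le \eta+\delta$, and no choice of $\eta$ rescues this when $\epsilon<\delta$; your parenthetical ``choose $\delta$ small enough at the outset'' reverses the quantifiers. Moreover, the global mean bound $\E_{n\in[N]}\theta_{W,-1}(n)\ge 2\delta$ from Chen's theorem cannot by itself force the dense model to be pointwise large: a weight of positive mean can still have near-zero average on a structured set of positive density (think of a weight supported on a single residue class), and then any $g$ with $\lVert \theta_{W,-1}-g\rVert_{U^{k+1}}$ small must be near zero there too, so no small-$U^{k+1}$ correction can lift it to $\ge\delta$ everywhere. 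The missing ingredient is a lower bound for the averages of $\theta_{W,-1}$ over \emph{all} sufficiently long arithmetic progressions (equivalently, over the atoms of the relevant factor in the dense model theorem); this is precisely the content of \cite[Proposition 5.2]{Bienvenu-Shao-Teravainen}, it is where the $W$-trick and the coprimality of both $b=-1$ and $b+2=1$ to $W$ enter, and it is what makes the structured part $f_1$ of the decomposition pointwise $\ge\delta$ \emph{directly}, with no truncation step and no loss of $\delta$ in the $U^{k+1}$ bound on $f_2$. Your closing appeal to a ``refined Koopman--von Neumann decomposition'' gestures at the right mechanism but is doing all the work; as written the argument does not close.
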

\begin{proof}
This proposition follows from the transference principle stated in \cite[Proposition 3.9]{Bienvenu-Shao-Teravainen}. Proposition 4.2 and Proposition 5.2 in the same paper show that for large $w$, the function $f = \theta_{W, -1}$ satisfies the conditions in the hypothesis of \cite[Proposition 3.9]{Bienvenu-Shao-Teravainen}, and so proves \cref{thm:theta_decomposed}. 
\end{proof}




\subsection{Intersectivity of shifted Chen primes}\


The fact that $\P_{\mathrm{Chen}} + 1$ is intersective follows from a much more general result below, which says that $\P_{\mathrm{Chen}} + 1$ is a set of ``multiple polynomial recurrence for measure preserving systems of commuting transformations.'' In other words, by Furstenberg's Correspondence Principle \cite[Theorem 1.1]{Furstenberg77}, the intersectivity part of \cref{thm:Chen_primes_recurrence} corresponds to the case $\ell = m = 1$ and $q_{1, 1}(n) = n$ of the following proposition.

\begin{proposition}
\label{thm:main-twin-prime-recurrence}
Let $\ell \in \N$, $(X, \mathcal{B}, \mu)$ be a probability space and let $T_1, \ldots, T_{\ell}: X \to X$ be commuting measure preserving transformations (i.e. $\mu(T_i^{-1} B) = \mu(B)$ for all $i \in [\ell]$ and $B \in \mathcal{B}$). Let $m \in \N$ and $q_{i,j}: \Z \to \Z$ be polynomials with $q_{i,j}(0) = 0$ for $i \in [\ell]$ and $j \in [m]$. For any $A \in \mathcal{B}$ with $\mu(A) > 0$, the set 
\[
    \left\{n \in \N: \mu \left(A \cap \left(\prod_{i=1}^{\ell} T_i^{-q_{i,1}(n)} A \right) \cap \ldots \cap \left(\prod_{i=1}^{\ell} T_i^{-q_{i,m}(n)} A \right) \right) > 0 \right\}
\]
has a nonempty intersection with  $\P_{\mathrm{Chen}} + 1$.
\end{proposition}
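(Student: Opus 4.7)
The plan is to use \cref{thm:theta_decomposed} as a transference tool: decompose the weight $\theta_{W,-1}$ into a structured part bounded below and a pseudorandom part with small Gowers norm, apply Bergelson--Leibman's polynomial multiple recurrence theorem to the former and a generalized von Neumann estimate to the latter. Fix $N$ large, put $w=\log\log N$, $W=\prod_{p\leq w,\,p\in\P}p$, and define
$$F(n):=\mu\Bigl(A\cap\bigcap_{j=1}^m\prod_{i=1}^\ell T_i^{-q_{i,j}(n)}A\Bigr).$$
It suffices to prove $\liminf_{N\to\infty}S_N>0$ for the weighted average
$$S_N:=\frac{1}{N}\sum_{n\in[N]}\theta_{W,-1}(n)\,F(Wn),$$
since $\theta_{W,-1}(n)>0$ forces $Wn-1\in\P_{\mathrm{Chen}}'\subset\P_{\mathrm{Chen}}$, so any $N$ with $S_N>0$ produces an $n'=Wn\in\P_{\mathrm{Chen}}+1$ satisfying $F(n')>0$.

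Pick $k$ large in terms of $\ell$, $m$, and the degrees of the $q_{i,j}$ (as dictated by the third step), and apply \cref{thm:theta_decomposed} to write $\theta_{W,-1}=f_1+f_2$ on $[N]$ with $\delta\leq f_1\leq 2$ and $\|f_2\|_{U^{k+1}(\Z_N)}\leq\epsilon$. Each polynomial $n\mapsto q_{i,j}(Wn)$ still vanishes at $n=0$, so the Bergelson--Leibman polynomial Szemer\'edi theorem for commuting transformations yields $\liminf_{N\to\infty}\frac{1}{N}\sum_{n\in[N]}F(Wn)\geq c>0$ for some $c=c(\mu(A))$, and hence $\frac{1}{N}\sum_n f_1(n)F(Wn)\geq\delta c/2$ eventually. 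For the pseudorandom contribution, I would invoke a generalized von Neumann estimate for weighted polynomial multiple ergodic averages in commuting systems (the PET-Cauchy--Schwarz bound of Bergelson--Leibman, refined by Host--Kra, Leibman and Frantzikinakis--Host--Kra): there is $k=k(\ell,m,\max\deg q_{i,j})$ so that any weight $a:[N]\to\R$ satisfies
$$\Bigl|\frac{1}{N}\sum_{n\in[N]}a(n)\,F(Wn)\Bigr|\ll_{q_{i,j},W}\|a\|_{U^{k+1}(\Z_N)}+o_{N\to\infty}(1).$$
Applying this with $a=f_2$ and choosing $\epsilon$ small enough in terms of $\delta$, $c$, and the implicit constant then gives $S_N>\delta c/4>0$ for all large $N$.

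The main obstacle is the generalized von Neumann step: although $f_1\leq 2$ controls the negative part of $f_2$, its positive part inherits the crude bound $\theta(n)\approx(\log n)^2$, so the estimate above must tolerate a weight that is not uniformly bounded in $L^\infty$ as $N\to\infty$. One addresses this with the pseudorandom-majorant machinery underpinning \cite{Bienvenu-Shao-Teravainen}: majorize $|f_2|$ by an enveloping sieve measure $\nu$ on $[N]$ with controlled linear-form and correlation conditions, and run the PET-induction Cauchy--Schwarz argument against $\nu$, reducing the $U^{k+1}$-control to the finitely many correlation estimates that the sieve provides. Because Chen primes admit exactly such a majorant (with the same linear forms and correlation conditions used in \cite{Bienvenu-Shao-Teravainen} to prove \cref{thm:theta_decomposed}), the argument closes and Proposition \ref{thm:main-twin-prime-recurrence} follows.
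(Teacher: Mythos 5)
Your architecture is exactly the paper's: weight the multiple recurrence sequence $F(Wn)$ by $\theta_{W,-1}$, split $\theta_{W,-1}=f_1+f_2$ via \cref{thm:theta_decomposed}, get a main term from the pointwise lower bound $f_1\geq\delta$ and kill the $f_2$ term by Gowers-norm control. There is, however, one genuine gap in the main term. You invoke the Bergelson--Leibman polynomial Szemer\'edi theorem to claim $\frac1N\sum_{n\in[N]}F(Wn)\geq c$ with $c=c(\mu(A))$. But $W=W(N)\to\infty$ with $N$, so the polynomials $n\mapsto q_{i,j}(Wn)$ change as $N$ grows, and plain Bergelson--Leibman only gives a positive liminf whose value may depend on the polynomials (hence on $W$) and on the system. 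If $c$ were allowed to decay with $W$, the comparison with the fixed constants $\delta$ and $\epsilon$ would not close. What is needed, and what the paper cites, is the \emph{uniform} multiple recurrence statement (\cite[Corollary 4.2]{Frantzikinakis_Host_Kra13}, see also \cite{Bergelson-Host-McCutcheon-Parreau}): a single $c>0$ depending only on $\mu(A)$, $m$, $\ell$ and the $q_{i,j}$ such that $\E_{n\in[N]}F(Wn)\geq c$ for all $W$ and all large $N$. You assert the right conclusion but attribute it to a theorem that does not deliver the required uniformity; this step needs the FHK/BHMP uniform version (or an explicit derivation of uniformity in the dilation parameter).

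On the $f_2$ term you are more cautious than necessary. The paper disposes of it by citing \cite[Lemma 3.5]{Frantzikinakis_Host_Kra13}, which bounds $\bigl|\E_{n\in[N]}a(n)F(Wn)\bigr|\ll\lVert a\cdot 1_{[N]}\rVert_{U^k(\Z_{kN})}$ for the weight directly: since the quantities $F(Wn)\in[0,1]$ are bounded, the PET/van der Corput iteration never needs the weight itself to be dominated by a pseudorandom measure, so no majorant or linear-forms/correlation conditions are required here. (The majorant-based generalized von Neumann theorem you describe is what the paper uses later for \emph{prime} intersectivity, \cref{prop:Chen_shifted_for_primes}, where the objects being correlated are themselves unbounded arithmetic weights rather than measures of sets.) So your proposed resolution of the "unbounded weight" obstacle would work but is heavier machinery than the situation demands; with the uniformity issue above repaired and the von Neumann step replaced by the FHK lemma, your proof is the paper's proof.
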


\begin{proof}
Define
\[
    \alpha(n) = \mu \left(A \cap \left( \prod_{i=1}^{\ell} T_i^{-q_{i,1(n)}} A \right) \cap \ldots \cap \left( \prod_{i=1}^{\ell} T_i^{-q_{i,m}(n)} A \right) \right).
\]
By \cite[Corollary 4.2]{Frantzikinakis_Host_Kra13} (see also \cite[Theorem 3.2]{Bergelson-Host-McCutcheon-Parreau}), there is a constant $c > 0$ depending only on $\mu(A), m$ and the polynomials $q_{i,j}$ such that for sufficiently large $N$, for any $W \in \N$,
\begin{equation}
\label{eq:uniform_bound_below}
    \E_{n \in [N]}  \alpha(Wn) \geq c.
\end{equation}
Let $\delta$ be the constant found in \cref{thm:theta_decomposed} and let $\epsilon > 0$ be very small compared to $c \delta$. By \cref{thm:theta_decomposed}, for sufficiently large $N$, we have
\begin{equation}
\label{eq:theta_f1f2}
    \E_{n \in [N]} \theta_{W,-1}(n) \alpha(Wn) = \E_{n \in [N]} f_1(n) \alpha(Wn) + \E_{n \in [N]} f_2(n) \alpha(Wn),
\end{equation}
where $f_1, f_2$ satisfy the conclusion of \cref{thm:theta_decomposed} with respect to aforementioned $\delta$ and $\epsilon$.  

Since $f_1$ is bounded below pointwise by $\delta$, \eqref{eq:uniform_bound_below} implies that
\begin{equation}
\label{eq:f_1}
    \E_{n \in [N]} f_1(n) \alpha(Wn) \geq c \delta.
\end{equation}
On the other hand, by \cite[Lemma 3.5]{Frantzikinakis_Host_Kra13}, there exists an integer $k$ that depends only on the maximum degree of the polynomials $q_{i,j}$ and the integers $\ell, m$ such that
\begin{equation}
\label{eq:f_2}
    \E_{n \in [N]} f_2(n) \alpha(Wn) \ll \lVert f_2 \cdot 1_{[N]} \rVert_{U^k(\Z_{k N})} \ll \lVert f_2 \rVert_{U^k(\Z_N)} \leq \epsilon.
\end{equation}
Combining \eqref{eq:theta_f1f2}, \eqref{eq:f_1} and \eqref{eq:f_2}, we deduce that for sufficiently large $N$, 
\[
    \E_{n \in [N]} \theta_{W,-1}(n) \alpha(Wn) \gg 1
\]
where the implicit constant depends only on $\mu(A), \ell, m$ and the $q_{i,j}$'s. 
Thus, there exists $n \in \N$ such that $\alpha(Wn) > 0$ and  $\theta_{W, -1}(n) > 0$.

Observe that $\theta_{W, -1}(n) >0$ implies $\theta(Wn - 1) > 0$, which in turn implies $Wn - 1 \in \P_{\mathrm{Chen}}$. Therefore, the fact that $\alpha(Wn) > 0$ and $\theta_{W, -1}(n) > 0$ implies $Wn \in \{m \in \N: \alpha(m) > 0\} \cap (\P_{\mathrm{Chen}} + 1)$. In particular, this intersection is nonempty and our theorem follows. 
\end{proof}

\subsection{Prime-intersectivity of shifted Chen primes}

\label{sec:multiple_recurrence_for_primes_shifted_chen}

Similarly to the case of $\N$-intersectivity, the fact that $\P_{\mathrm{Chen}} + 1$ is prime intersective (the second part of \cref{thm:Chen_primes_recurrence}) follows from a more general result concerning multiple recurrence.

\begin{proposition} \label{prop:Chen_shifted_for_primes}
Let $A \subset \P$ be such that $\overline{d}_{\P}(A) > 0$. Then for every $k \in \N$, there exists $p \in \P_{\mathrm{Chen}}$ such that 
\begin{equation*}\label{eq:recurrence_in_prime_gaps_twin}
    a, a + (p+1), \ldots, a + k(p+1) \in A.
\end{equation*} 
\end{proposition}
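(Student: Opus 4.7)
The strategy is to combine the decomposition of the Chen weight $\theta_{W,-1}$ provided by \cref{thm:theta_decomposed} with a Green--Tao-style relative transference applied to $A\subset\P$, and then look for a single $n\in[N]$ for which the Chen weight $\theta_{W,-1}(n)$ is positive \emph{and} the count of $(k{+}1)$-term APs in $A$ with common difference $Wn$ is positive. In that case the common difference will be exactly $p+1$ for $p:=Wn-1\in\Pchen$.

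The plan is to proceed as follows. First (W-trick), set $\delta=\overline{d}_\P(A)>0$, fix $k$, let $w$ be a slowly growing parameter and $W=\prod_{p\leq w,\,p\in\P}p$. By Dirichlet's theorem and pigeonhole, one can select a residue $b\in[W]$ with $(b,W)=1$ along which $A$ retains positive density, and define the W-tricked indicator
\[
g(n)\;:=\;\frac{\phi(W)}{W}\,\log(Wn+b)\,\mathbf{1}_{A}(Wn+b),
\]
whose average on $[N]$ is bounded below by some $\delta'=\delta'(\delta)>0$ along a subsequence $N\to\infty$. Second (transference for $A$), apply a Green--Tao-type relative Szemer\'edi theorem to $g$: since $g$ is majorized (up to $1+o(1)$) by the W-tricked von~Mangoldt function, which itself admits a Green--Tao pseudorandom majorant, one obtains a decomposition
\[
g=g_1+g_2,\qquad 0\leq g_1\leq C,\quad \E_{n\in[N]}g_1(n)\geq\delta'/2,\quad \lVert g_2\rVert_{U^{k+2}(\Z_{N'})}=o(1),
\]
for a suitable prime $N'\asymp N$ and some absolute constant $C$. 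Third, apply \cref{thm:theta_decomposed} with parameter $k+1$ in place of $k$ to obtain $\theta_{W,-1}=f_1+f_2$ with $\delta\leq f_1\leq 2$ pointwise and $\lVert f_2\rVert_{U^{k+2}(\Z_{N'})}\leq\epsilon$ for $\epsilon$ chosen small enough later.

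Fourth (the mixed count), consider the weighted average
\[
S(N)\;:=\;\E_{n\in[N]}\E_{m\in[N]}\;\theta_{W,-1}(n)\prod_{j=0}^{k}g(m+jn).
\]
Expanding $\theta_{W,-1}=f_1+f_2$ and $g=g_1+g_2$ yields one \emph{main term}
\[
M(N)\;:=\;\E_{n\in[N]}\E_{m\in[N]}\;f_1(n)\prod_{j=0}^{k}g_1(m+jn)
\;\geq\;\delta\cdot\E_{n\in[N]}\E_{m\in[N]}\prod_{j=0}^{k}g_1(m+jn),
\]
which is bounded below by a positive constant $c=c(k,\delta,C)>0$ via Varnavides' averaged form of Szemer\'edi's theorem applied to the bounded nonnegative function $g_1$ of mean $\geq\delta'/2$. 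All remaining terms contain either $f_2$ or at least one $g_2$ factor; by a generalized von~Neumann inequality (of the type used in \cite{Bienvenu-Shao-Teravainen,Green_Tao08}) each such term is bounded in absolute value by a constant (depending on $C$ and $k$) times the $U^{k+2}$-norm of the respective uniform piece. Choosing $\epsilon$ small and $w,N$ large renders the total error at most $c/2$, so $S(N)\geq c/2>0$. Hence there exist $m,n\in[N]$ with $\theta_{W,-1}(n)>0$ and $g(m+jn)>0$ for every $j\in\{0,\dots,k\}$; the former gives $p:=Wn-1\in\Pchen$, the latter gives $Wm+b,\,W(m+n)+b,\dots,W(m+kn)+b\in A$, which is the required AP in $A$ with common difference $p+1$.

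The main obstacle is to justify the generalized von~Neumann inequality in this mixed setting, because both $\theta_{W,-1}$ and $g$ are unbounded (polynomial in $\log$) and only controlled through pseudorandom majorants. The cleanest route is to first replace $\theta_{W,-1}$ by its bounded dense model $f_1+f_2$: once the $\theta$-factor is either the bounded function $f_1\leq 2$ (where the $g$-variables can be controlled by standard Cauchy--Schwarz/Gowers arguments against the pseudorandom Green--Tao majorant of the primes) or the uniform function $f_2$ (where the Chen-prime pseudorandom majorant from \cite{Bienvenu-Shao-Teravainen} controls the $g$-variables), the bookkeeping becomes routine. The only subtlety is to ensure that both majorants can be handled simultaneously in a single Cauchy--Schwarz chain; this is essentially the relative Szemer\'edi framework and is the place where the hypothesis that we are in the \emph{shifted Chen primes} $\Pchen+1$ (rather than $\Pchen-1$) enters through the availability of the decomposition in \cref{thm:theta_decomposed}.
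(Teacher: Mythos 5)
Your plan is correct, and its overall architecture coincides with the paper's: form the weighted average $\E_{n\in[N]}\theta_{W,-1}(n)\,\alpha_{W,N}(n)$, where $\alpha_{W,N}(n)$ is the (weighted) count of $(k+1)$-term progressions in $A$ with common difference $Wn$; split $\theta_{W,-1}=f_1+f_2$ via \cref{thm:theta_decomposed}; extract a positive main term from $f_1\geq\delta$; and kill the $f_2$ contribution with a generalized von Neumann inequality against a pseudorandom majorant, concluding that some $n$ has both $\theta_{W,-1}(n)>0$ and $\alpha_{W,N}(n)>0$. The one place you diverge is in how the positivity of $\E_{n}\alpha_{W,N}(n)$ is established. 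The paper does not decompose $f_{W,b}(n)=\frac{\phi(W)}{W}\log(Wn+b)1_A(Wn+b)$ at all: it quotes Theorem 3.5 and Proposition 9.1 of \cite{Green_Tao08} as a black box to obtain $\E_{n\in[N]}\alpha_{W,N}(n)\geq c(k,\rho)$, after which $f_1\geq\delta$ pointwise and $\alpha_{W,N}\geq 0$ immediately give $\E_n f_1(n)\alpha_{W,N}(n)\geq c\delta$. You instead run a second transference on $g=f_{W,b}$ (dense model $g_1$ plus uniform part $g_2$) and lower-bound the resulting main term via Varnavides applied to the bounded function $g_1$. Both routes work, but the paper's is lighter: it never has to control cross terms containing $g_2$, whereas your double decomposition multiplies the number of error terms and forces the generalized von Neumann step to juggle the Green--Tao prime majorant and the Chen majorant in the same Cauchy--Schwarz chain. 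You correctly flag this as the main subtlety; it is resolved, exactly as in the paper's treatment of the $f_2$ term, by observing that a convex combination of pseudorandom measures is again pseudorandom, so a single $\nu$ with $|f_j|\leq\nu+2$ for all factors can be produced. In short, your extra decomposition of $g$ buys nothing here, since the quantitative AP count in dense subsets of the primes is precisely what \cite{Green_Tao08} already supplies.
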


\begin{proof}
The idea is similar to the proof of \cref{thm:main-twin-prime-recurrence}. The main difference is that we will use the Green-Tao Theorem \cite{Green_Tao08} on arithmetic progressions in primes instead of the uniform Furstenberg-Katznelson Multiple Recurrence (\cite[Corollary 4.2]{Frantzikinakis_Host_Kra13}, \cite[Theorem 3.2]{Bergelson-Host-McCutcheon-Parreau}). 

For the rest of the proof, $w$ will be a very large integer, $W = \prod_{p < w, p \in \P} p$, and $N \gg e^{e^w}$. 
These parameters can always be increased later without affecting our proof. 

Suppose $\overline{d}_{\P}(A) = \rho > 0$. For $b \in [W]$ coprime to $W$, define $f_{W, b} = \frac{\phi(W)}{W} \log (Wn + b) 1_{A}(Wn + b)$. 
Note that 
\begin{multline*}
    \overline{d}_{\P}(A) = \limsup_{N \to \infty} \frac{|A \cap \P \cap [N]|}{|\P \cap [N]|} = \limsup_{N \to \infty} \E_{n \in [N]} \log(n) 1_A(n) \\
    = \limsup_{N \to \infty} \E_{\substack{b \in [W] \\ (b, W) = 1}} \E_{n \in [N/W]} f_{W,b}(n).
\end{multline*}
Therefore, there exists $b \in [W]$ coprime to $W$ such that $\limsup_{N \to \infty} \E_{n \in [N]} f_{W,b}(n) \geq \rho$. 
Define 
\[
    \alpha_{W, N}(n) = \E_{a \in [N]} f_{W, b}(a) f_{W, b}(a + n) \cdots f_{W, b}(a + kn). 
\]
By Theorem 3.5 and Proposition 9.1 in \cite{Green_Tao08} (or see page 524 in that paper), there exists a positive constant $c = c(k, \rho)$ such that
\begin{equation}\label{eq:alpha_bounded_below_prime}
    \E_{n \in [N]} \alpha_{W, N}(n) \geq c.
\end{equation}
Define $\theta_{W, -1}$ as in \eqref{eq:w-trick-defined}. Let $\delta$ be the constant found in \cref{thm:theta_decomposed} and let $\epsilon$ be very small compared to $c \delta$. In view of \cref{thm:theta_decomposed} and by increasing $w$ and $N$ if necessary, $\theta_{W, -1}$ can be decomposed on $[N]$ as
\[
    \theta_{W, -1} = f_1 + f_2
\]
where $\delta \leq f_1 \leq 2$ pointwise and $\lVert f_2 \rVert_{U^{k+1}(\Z_N)} \leq \epsilon$.
It follows from \eqref{eq:alpha_bounded_below_prime} that
\begin{equation}\label{f_1alpha}
    \E_{n \in [N]} f_1(n) \alpha_{W, N}(n) \geq c \delta.
\end{equation}
In the equation above, we use the fact that $\delta, c$ only depend on $\rho$ and $k$. As a result, $c \delta$ does not depend on $w$ or $N$.

Now we want to show
\begin{equation}\label{f_2alpha}
    \E_{n \in [N]} f_2(n) \alpha_{W, N} (n) = \E_{n \in [N]} f_2(n) \E_{a \in [N]} \prod_{j=0}^k f_{W,b}(a + jn) \ll \epsilon.
\end{equation}
In order to do this, we will use the generalized von Neumann Theorem (van der Corput Lemma) (see \cite[Proposition 5.3]{Green_Tao08}, \cite[Proposition 7.1]{Green_Tao12} or \cite[Proposition 3.8]{Bienvenu-Shao-Teravainen}) which says that if the integers $a_j, b_j$ satisfy $a_i b_j \neq a_j b_i$ for all $i \neq j$ and if  $|f_j| \leq \nu + 2$ for all $j$ where $\nu$ is a ``pseudorandom measure'' on $[N]$, then
\[
    \E_{m, n \in [N]} \prod_{j=0}^k f_j(a_j m + b_j n) \ll \inf_{0 \leq j \leq k} \lVert f_j \rVert_{U^{k}(\Z_N)}.
\]
By \cite[Theorem 9.1]{Green_Tao08}, $|f_{W, b}|$ is bounded by a pseudorandom measure $\nu$. It remains to check that $|f_2| \leq \nu + 2$. Since $f_2 = \theta_{W, -1} - f_1$ and $0 \leq f_1 \leq 2$, it suffices to check that $0 \leq \theta_{W, -1} \leq \nu$. This in turn was shown in \cite[Proposition 4.2]{Bienvenu-Shao-Teravainen}. 


Putting \eqref{f_1alpha} and \eqref{f_2alpha} together, we get 
\[
    \E_{n \in [N]} \theta_{W,-1}(n) \alpha_{W, N}(n) \gg 1.
\]
In particular, there exist $a, n \in [N]$ such that
\[
    Wa - 1, W(a + n) - 1, \ldots, W(a + kn) - 1 \in A
\]
and $Wn \in \P_{\mathrm{Chen}} + 1$. Thus $Wa - 1, Wa - 1 + Wn, \ldots, Wa - 1 + k(Wn) \in A$ where $Wn \in \P_{\mathrm{Chen}} + 1$. 

\end{proof}


\begin{remark}
    In this remark, we explain why the proofs of Propositions \ref{thm:main-twin-prime-recurrence} and \ref{prop:Chen_shifted_for_primes} do not work for $\P_{\mathrm{Chen}} - 1$. A main ingredient in these proofs is \cref{thm:theta_decomposed}, which in turn uses \cite[Proposition 5.2]{Bienvenu-Shao-Teravainen}. The hypothesis of \cite[Proposition 5.2]{Bienvenu-Shao-Teravainen} requires both $b$ and $b + 2$ to be coprime to $W$, where $W = \prod_{p \in \P, p < w} p$ comes from the $W$-tricked weighted indicator function of Chen primes $\theta_{W, b}$. (This requirement  is due ultimately to the fact that $\theta_{W, b}$ is supported on $\{n: Wn + b \text{ is a Chen prime}\}$.) If we work with $\P_{\mathrm{Chen}} + 1$, then $b = -1$ and $b + 2 = 1$, both of which are coprime to $W$. On the other hand, if we work with $\P_{\mathrm{Chen}} - 1$, then $b = 1$ and so $b + 2 = 3$ which is not coprime to $W$. At the moment, we do not know how to remove this local obstruction.
\end{remark}

	\section{A quantitative approach and Proof of \texorpdfstring{\cref{th:prime-elem}}{Theorem B}}

 \label{sec:quantitative-approach}

\subsection{Proof of \texorpdfstring{\cref{th:prime-elem}}{Theorem B}}
For two functions $f, g: \R_{>0} \to \R_{>0}$, the expression $f(x) = o(g(x))$ means $\lim_{x \to \infty} f(x)/g(x) = 0$.
Theorem \ref{th:prime-elem} will be a simple consequence of the following proposition.

\begin{proposition}
\label{lm:generalisation}
Let $E \subset \N$ be an infinite set and $E(x)=|E\cap [1,x]|$ its counting function. Let $E_m(x)=|\{n\in E \cap [1, x]: n+m\in E\}|$. Assume that 
there is a thick set $T \subset \N$ and an increasing bijection $f :\R_{>0}\rightarrow\R_{>0}$ such that the
following hold as $x$ tends to infinity:
\begin{enumerate}
    \item \label{hyp1}    $f^{-1}(x)=o(E(x))$.
    \item \label{hyp2} $\max \{E_m(x):m\in T,m\leq f(x)\}=o(E(x))$.
\end{enumerate}
Then there exists $A\subset E$ of relative density 1 such that
$A-A$ is not syndetic.

If both conditions are satisfied as $x$ tends to infinity along a common subsequence,
then there exists $A\subset E$ of upper relative density 1 such that
$A-A$ is not syndetic.

\end{proposition}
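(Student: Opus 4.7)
My plan is to construct $A$ as $E \setminus R$, where $R$ is a small ``vertex cover'' designed to delete arbitrarily long intervals of differences inside $T$. Concretely, I would set $L_k := k$ and inductively pick an increasing sequence $M_1 < M_2 < \cdots$ so that $I_k := [M_k, M_k + L_k] \subset T$; this is possible because thickness of $T$ provides intervals of every length with arbitrarily large starting point. I then define
\[
R_k := \{n_2 \in E : n_2 - m \in E \text{ for some } m \in I_k\},
\]
i.e.\ the larger element of every pair in $E \times E$ whose difference lies in $I_k$, and set $R := \bigcup_k R_k$ and $A := E \setminus R$. For any two distinct elements $n_1 < n_2$ of $A$ with $n_2 - n_1 \in I_k$, we would have $n_1 \in E$, hence $n_2 \in R_k \subset R$, a contradiction; thus $(A - A) \cap I_k = \varnothing$ for every $k$, and since $|I_k| = L_k + 1 \to \infty$, the complement $\N \setminus (A - A)$ is thick, so $A - A$ is not syndetic.

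The heart of the argument is the density estimate $|R \cap [1, x]| = o(E(x))$. Let $\varepsilon(x) := \max\{E_m(x)/E(x) : m \in T,\ m \leq f(x)\}$, which tends to $0$ by (2). Also, (1) together with $E(x) \leq x$ gives $f^{-1}(x) = o(x)$, equivalently $f(y)/y \to \infty$. I would therefore choose each $M_k$ large enough that
\begin{enumerate}
    \item[(i)] $f(M_k) > M_k + L_k$, which ensures $I_k \subset T \cap [1, f(x)]$ for all $x \geq M_k$ and hence $E_m(x) \leq \varepsilon(x) E(x)$ for every $m \in I_k$;
    \item[(ii)] $\sup_{y \geq M_k} \varepsilon(y) \leq 1/(k S_k)$, where $S_k := \sum_{j \leq k}(L_j + 1) \sim k^2/2$.
\end{enumerate}
Since $R_k \subset [M_k + 1, \infty)$, one has $R_k \cap [1, x] = \varnothing$ for $x \leq M_k$, and for $x \geq M_k$
\[
|R_k \cap [1, x]| \;\leq\; \sum_{m \in I_k} E_m(x - m) \;\leq\; \sum_{m \in I_k} E_m(x) \;\leq\; (L_k + 1)\, \varepsilon(x)\, E(x).
\]
Summing over $k \leq k(x) := \max\{k : M_k \leq x\}$ and invoking (ii) yields
\[
\frac{|R \cap [1, x]|}{E(x)} \;\leq\; \varepsilon(x)\, S_{k(x)} \;\leq\; \frac{1}{k(x)} \;\longrightarrow\; 0,
\]
so $d_E(A) = 1$.

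For the subsequence version I would run the same construction, imposing (i)-(ii) only along the given subsequence; the estimate then holds along the subsequence and gives $\overline{d}_E(A) = 1$ in place of $d_E(A) = 1$. The point that needs care, and that I expect to be the main obstacle, is the joint inductive selection of $M_k$ at each stage: one must simultaneously meet $[M_k, M_k + L_k] \subset T$, $f(M_k) > M_k + L_k$, and the smallness condition on $\varepsilon$. Each of these holds for all sufficiently large choices (the first by thickness, the second because $f(y)/y \to \infty$, the third because $\varepsilon \to 0$), so the three can be secured together, but this is where the two hypotheses of the proposition enter in an essential, coordinated way.
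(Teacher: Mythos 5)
Your proof of the main (full-limit) statement is correct, and it takes a genuinely different route from the paper's. The paper removes from $E$ the \emph{smaller} element $s$ of each bad pair, but only for those intervals $I_k$ with $g(k)\leq f(s)$; this leaves a residual set of larger elements living near the points $g(k)$, which is then controlled using hypothesis (1) in its full strength $f^{-1}(x)=o(E(x))$. You instead remove only the \emph{larger} element of every bad pair and arrange the locations $M_k$ so that hypothesis (2) applies to every difference $m\in I_k$ whenever $R_k\cap[1,x]\neq\varnothing$; hypothesis (1) then enters only through the weak consequence $f(y)/y\to\infty$ (via $E(x)\leq x$). This is a cleaner bookkeeping and in fact shows that, for the first assertion, hypothesis (1) can be relaxed to $f^{-1}(x)=o(x)$. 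All the individual estimates ($|R_k\cap[1,x]|\leq\sum_{m\in I_k}E_m(x)$, the choice of $S_k$, the bound by $1/k(x)$) check out.

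There is, however, a genuine gap in your treatment of the subsequence version. You justify condition (i) by saying that $f(M)>M+L_k$ holds for all sufficiently large $M$ ``because $f(y)/y\to\infty$.'' When hypothesis (1) holds only along a subsequence $(x_n)$, you only get $x_n/f^{-1}(x_n)\to\infty$, i.e.\ $f$ is large at the points $f^{-1}(x_n)$; between these points $f$ may hug the identity on long stretches, so $f(M)>M+L_k$ need not hold for all large $M$, and thickness of $T$ only supplies intervals at locations you do not control --- these locations may all fall in stretches where $f(M)\leq M+L_k$. So the ``joint inductive selection'' you flag as the delicate point can actually fail as you have set it up. The repair is to observe that what you really need is $M_k+L_k\leq f(x_n)$ for every subsequence element $x_n>M_k$, not $f(M_k)>M_k+L_k$. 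Since $f^{-1}(x_n)<x_n$ for large $n$, you have $f(x_n)>x_n$, so this holds automatically unless $x_n-L_k<M_k<x_n$; spacing the $M_k$ so that at most one index $k$ can satisfy this for a given $x_n$, the corresponding $R_k\cap[1,x_n]\subset E\cap(M_k,x_n]$ has at most $L_k$ elements, and requiring in addition $E(M_k)\geq k^2$ makes this contribution at most $\sqrt{E(x_n)}=o(E(x_n))$. With that patch your argument gives the subsequence version as well; note that the paper's formulation avoids this issue because its set $C$ is defined intrinsically and hypothesis (1) is invoked pointwise at $x=x_n$.
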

\begin{remark}
The hypothesis of \cref{lm:generalisation} may look unnatural and complicated, but certainly some sort of conditions on $E_m(x)$ is necessary.
We discuss this issue in \cref{sec:concluding_remarks}.
\end{remark}

\begin{proof}[Proof of \cref{lm:generalisation}]
 By definition, the thick set $T$ contains a set of the form $R=\bigcup_{k\in\N} I_k$ where $I_k=\N\cap [g(k)-k,g(k)]$ and $g:\R_{>0}\rightarrow\R_{>0}$ is an increasing bijection. We
    may choose $g$ to grow as fast as we like, and we will determine 
    a suitable rate of growth later.

We shall construct a set $C\subset E$ of relative density 1 in $E$ such that
$B=(C+R)\cap E$ is relatively sparse in $E$.
Then $A=C\setminus B$ satisfies $A\cap (A+R)\subset (C+R)\cap (E\setminus B) =\varnothing$, so
$(A-A)\cap R=\varnothing$, in particular $A-A$ is not syndetic.

Observe that for any $C\subset E$ we have
$$
C+R=\bigcup_{\substack{s\in C,k\in\N\\g(k)\leq f(s)}}(s+I_k)\cup\bigcup_{\substack{s\in C,k\in\N\\g(k)> f(s)}}(s+I_k)
$$
Now we try to construct $C$ such that 
$\bigcup_{\substack{s\in C,k \in \N\\g(k)\leq f(s)}}(s+I_k)$ does not meet $E$.

    Define
    \[
        C = \{s \in E: (s + I_k) \cap E = \varnothing \text{ for all } k \text{ such that } g(k) \leq f(s) \}.
    \]
Let $E'(x) := |(E \setminus C) \cap [x]|$ be the counting function of $E \setminus C$. If $s \in E \setminus C$, then by definition of $C$, there exists $k \in \N$ such that $g(k) \leq f(s)$ (so $k \leq g^{-1}(f(s))\,$) and $m \in I_k \subset T$ such that $s + m \in E$.  Therefore,
    \[
    E'(x)\leq \sum_{k:g(k)\leq f(x)}\sum_{m\in I_k}E_m(x) < g^{-1}(f(x))^2 \cdot \max\{E_m(x): m < f(x),m\in T\} = o(E(x))
    \]
    if $g$ grows sufficiently quickly, using hypothesis (\ref{hyp2}).

    Then
$(C+R)\cap E\subset \bigcup_{\substack{s\in E,k \in \N\\g(k)> f(s)}}(s+I_k)=:D$.
We show that this set is sparse.
Observe that if $g(k) > f(s)$, then $s < f^{-1}(g(k))$ and $s + I_k \subset [g(k)-k, g(k) + f^{-1}(g(k))]$. Therefore,
$$
D\subset \bigcup_{k}[g(k)-k,g(k)+f^{-1}(g(k))].
$$
In particular,
$D\cap [1,x]\subset \bigcup_{g(k)<x}[g(k)-g^{-1}(x),g(k)+f^{-1}(x)]$, whose cardinality is 
at most 
$$g^{-1}(x)  \cdot ((f^{-1}(x)+g^{-1}(x)) = o(E(x))$$
if $g$ grows sufficiently quickly (in terms of $f$ and $E$), in view of hypothesis (1).

It follows that the counting function $B(x)$ of $B = (C+R)\cap E$ satisfies $B(x)=o(E(x))$, as desired, and we are done.

If the hypotheses only hold for $x$ in an increasing sequence $(x_n)_{n\in\N}$ of integers, we still have $E'(x_n)=o(E(x_n))$ and also
$B(x_n)=o(E(x_n))$ as $n$ tends to infinity, which concludes the proof.
\end{proof}	

\begin{proof}[Proof of \cref{th:prime-elem}] 
To prove Theorem \ref{th:prime-elem}, it suffices
to check the hypotheses of \cref{lm:generalisation} for the set of primes.
In this case, one may take $T=\N$ and $f:x\mapsto x^2$. The counting function
of the prime $E(x)$ is asymptotic to $x/\log x$ by the prime number theorem,
so the first hypothesis is satisfied.

		By definition, $E_m(x)$ is the number of primes $p\leq x$ such that
$p+m$ is prime.		
		We know, by Selberg's sieve, that $$E_m(x)\ll \frac{x}{\log^2x}\prod_{p\mid m}(1+1/p),$$ where the implied constant is absolute; see for instance \cite{HR}.
		Furthermore, denoting the number of prime factors of $m$ by $\omega(m)$
		and observing that $\omega(m)\leq \log_2 m$
		and $\prod_{p\leq n}(1+1/p)\ll \log n$,
		we have
		$$
		\prod_{p\mid m}(1+1/p)\leq \prod_{p\leq  \omega(m)}(1+1/p)
		\ll \log\log m.
		$$
Thus for any $m\leq x^2$, we have $E_m(x)\ll \frac{x\log\log x}{\log^2x}=o(E(x))$, confirming the second hypothesis.
\end{proof}

\cref{lm:generalisation} also applies to $E = \{P(n): n \in \N\}$ where $P \in \Z[x]$ with a positive leading coefficient and $d = \deg(P) \geq 2$. In this case, we take $T = \N$ and $f: x \to x^{d + 1}$. Since $P(x) = o(x^{d+1})$, we have $f^{-1}(x) = x^{1/(d+1)} = o(E(x))$. For $m \in \N$, $E_m(x) \ll d(m)$ where $d(m)$ is the number of divisors of $m$. Applying the divisor the bound $d(m)=m^{o(1)}$, it is easy to see the second hypothesis is true in this case. For the same reason, we can show that \cref{lm:generalisation} also applies to $E = \{P(p): p \in \P\}$ where $P \in \Z[x]$ having a positive leading coefficient and degree $\geq 2$.

\subsection{A discussion on \texorpdfstring{\cref{lm:generalisation}}{Proposition 3.1}}

\label{sec:concluding_remarks}

 

We construct in \cref{lm:generalisation} a set $A \subset E$ such that $d_E(A) = 1$ and $A-A$ is not syndetic, assuming some more or less complicated hypotheses on $E$.
Here we discuss the necessity of certain hypotheses of \cref{lm:generalisation}.

As mentioned in the introduction, if $\underline{d}(E) > 0$ and $A \subset E$ satisfies $\overline{d}_E(A) > 0$, then $A - A$ is syndetic. Therefore, it is necessary in \cref{lm:generalisation} that $\underline{d}(E) = 0$. However, merely having zero lower density does not guarantee the conclusion of \cref{lm:generalisation} as we shall see; in fact, even having upper Banach density 0 does not suffice.

First, recall the notations $E(x)$ and $E_m(x)$ from \cref{lm:generalisation}. If for some $m\in\N$ and constant $c_m>0$ and every $x>0$ we have $E_m(x) \geq c_m E(x)$, then every $A\subset E$ such that $m \not\in A-A$ must have $\overline{d}_E(A)<1-c_m/2$. 
Indeed, for every pair $\{n,n+m\}\subset E$, at least one element of the pair must be outside of $A$; and any element of $E$ may be in at most two such pairs. 
This observation already makes the condition on $E_m(x)$ in \cref{lm:generalisation} less surprising and will be implicitly at the core of the next proposition.

\begin{proposition} \label{prop:ss'-z}
Let $E$ be the set of all positive integers which have the same number of $0$'s and $1$'s in their binary expansions; that is 
\[
E = \bigcup_{k=0}^\infty \left\{ 2^{2k+1} + 
\sum_{i \in I} 2^i: I \subset [0,2k], |I|=k \right\}.
\]
Then $E$ has the following properties:
\begin{enumerate}
\item  \label{item:Banach_density_0_prop7.1}  $d^*(E)  = 0$,
\item \label{item:E'-E'=Z} If $E' \subset E$ is such that $\overline{d}_{E}(E')=1$, then $E'-E' = \Z$.
\end{enumerate}
\end{proposition}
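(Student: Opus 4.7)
My plan exploits the rigid binary structure of $E$: writing $s_2(n)$ for the binary digit sum of $n$, an integer $n$ lies in $E$ if and only if $n \in [2^{2k+1}, 2^{2k+2})$ and $s_2(n) = k+1$ for some $k \geq 0$.

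For part~(1), I will bound $|E \cap [M, M+N)|$ uniformly in $M$. Set $\ell = \lceil \log_2 N \rceil$; any interval of length $N \leq 2^\ell$ is covered by at most two dyadic windows $[a \cdot 2^\ell, (a+1) \cdot 2^\ell)$, inside which the bits at positions $\geq \ell$ are fixed (and equal to the bits of $a$). The integers in such a window with any prescribed digit sum $t$ therefore number exactly $\binom{\ell}{t - s_2(a)}$, which is bounded above by the central binomial $\binom{\ell}{\lfloor \ell/2 \rfloor}$. Since the digit sum required for $E$-membership is determined by the bit length of the integer and takes at most two values over the window, $|E \cap [M, M+N)| \ll \binom{\ell}{\lfloor \ell/2 \rfloor} \ll 2^\ell/\sqrt{\ell}$. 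Dividing by $N$ gives density $O(1/\sqrt{\log N}) \to 0$, hence $d^*(E) = 0$.

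For part~(2), I will reduce matters to the quantitative lemma: for every $z \in \N$ there exists $c_z > 0$ such that $E_z(x) := |\{n \in E \cap [1, x] : n + z \in E\}| \geq c_z \, |E \cap [1, x]|$ for all large $x$. Granting this, since $\overline{d}_E(E') = 1$ I can extract a subsequence $x_i \to \infty$ along which $|(E \setminus E') \cap [1, x_i]| = o(|E \cap [1, x_i]|)$. A pair $(n, n+z)$ counted by $E_z(x_i)$ fails to lie in $(E')^2$ only if one of $n, n+z$ belongs to $E \setminus E'$, and the number of such bad pairs is at most $2|(E \setminus E') \cap [1, x_i + z]| = o(|E \cap [1, x_i]|)$, which for large $i$ is strictly smaller than $E_z(x_i)$. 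Hence some pair $(n, n+z)$ sits entirely in $E'$, contributing both $z$ and $-z$ to $E' - E'$. Combined with $0 \in E' - E'$, this forces $E' - E' = \Z$.

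The heart of the argument is the construction supporting the lemma. Fix $L \in \N$ with $2z < 2^L$. For each large $k$, consider the integers $n = 2^{2k+1} + h \cdot 2^L + z$ where $h \in [0, 2^{2k+1-L})$ satisfies $s_2(h) = k - s_2(z)$: the three summands occupy disjoint bit ranges, so $s_2(n) = 1 + s_2(h) + s_2(z) = k+1$ and $n \in E \cap [2^{2k+1}, 2^{2k+2})$. Because $2z < 2^L$ no carry escapes the bottom $L$ bits when we add $z$, giving $n + z = 2^{2k+1} + h \cdot 2^L + 2z$ with $s_2(n+z) = 1 + s_2(h) + s_2(2z) = k+1$, so $n + z \in E$ as well. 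The count $\binom{2k+1-L}{k - s_2(z)}$ of admissible $h$ is, by comparing central coefficients in Pascal's triangle via Stirling, $\Theta_z\!\left(\binom{2k+1}{k}\right)$; together with $|E \cap [1, 2^{2k+2}]| = \Theta\!\left(\binom{2k+1}{k}\right)$, this establishes the lemma at the dyadic scales $x = 2^{2k+2}$. The only real bookkeeping I anticipate as an obstacle is interpolating to arbitrary $x$: for $x \in [2^{2k+1}, 2^{2k+2})$ one combines the monotonicity of $E_z$ with the fact that $|E \cap [1, x]|$ changes by only a bounded multiplicative factor as $x$ traverses this dyadic range, which upgrades the dyadic bound to all scales.
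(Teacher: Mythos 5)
Your proof is correct and takes essentially the same approach as the paper: part~(1) counts $E$ in dyadic windows, and part~(2) constructs, for a given difference $z$, the same $\binom{2k+1-L}{k-s_2(z)}$ pairs $(n,n+z)\subset E\cap[2^{2k+1},2^{2k+2})$ that the paper indexes by $J\subset[\ell+2,2n]$ (with $L=\ell+2$, $m=s_2(z)$), then runs the identical density argument. The only minor blemish is in part~(1): for the dyadic window $[0,2^\ell)$ the required digit sum is not confined to two values, and you need the separate observation that $\sum_{2j+2\le\ell}\binom{2j}{j}\ll\binom{\ell}{\lfloor\ell/2\rfloor}$ (cf.\ the paper's Case~1), though this does not affect the conclusion.
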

\begin{proof}   
First we prove (\ref{item:Banach_density_0_prop7.1}). Note that
\begin{equation}\label{eq:E_2^n_bound}
|E \cap [1, 2^{2n+2}]| = \sum_{k =0}^n \binom{2k}{k} = O(2^{2n} /\sqrt{n}),
\end{equation}
so $E$ has upper density $0$.

We will now prove the stronger statement that $E$ has upper Banach density $0$. It suffices to show that as $n \rightarrow \infty$, for every $u \in \N$, we have $|E \cap [u, u+2^n)| = o(2^n)$. Also, we may assume that $u$ is divisible by $2^n$. We consider two cases.

\noindent \textbf{Case 1}: $u = 0$. In this case the claim follows from the estimate \eqref{eq:E_2^n_bound}.

\noindent \textbf{Case 2}: $u \geq 2^n$. We write
\begin{equation*} 
    u = \sum_{s=1}^\ell 2^{j_s}
\end{equation*}
where $n \leq j_1 < \cdots < j_\ell$. 
Suppose $v \in E \cap [u, u+2^n)$. By the definition of $E$,
\begin{equation*} 
    v = 2^{2k+1} + 
\sum_{r=1}^k 2^{i_r}
\end{equation*}
for some $0 \leq i_1 < \cdots  < i_k \leq 2k$. Therefore
\begin{equation*} 
    v = 2^{2k+1} + 
\sum_{r=1}^k 2^{i_r} = \sum_{s=1}^\ell 2^{j_s} + y
\end{equation*}
for some $0 \leq y < 2^{n}$. 
We must necessarily have $2k+1 = j_\ell$ (in particular, $k$ is uniquely determined in terms of $u$). Furthermore, $i_k = j_{\ell-1}, \ldots, i_{k-\ell + 2} = j_1$ and they are also uniquely determined.
Hence, 
\[
y = \sum_{r=1}^{k-\ell + 1} 2^{i_r}.
\]
Therefore, 
\[
    |E \cap [u, u+2^n)| \leq \binom{n}{k - \ell + 1} = o(2^n).
\]

We now proceed to prove (\ref{item:E'-E'=Z}). Let $a \in \N$ be arbitrary. Suppose $a = \sum_{i \in A} 2^i$ is the binary representation of $a$. Then we have
\[
a = \sum_{i \in A + 1} 2^i - \sum_{i \in A } 2^i.
\]

Let $|A| =m$ and $\ell = \max A$. By \eqref{eq:E_2^n_bound}, there exists a constant $c_{\ell, m} >0$ such that for all $n$ sufficiently large, we have \[
 \binom{2n - \ell-1}{n-m} > c_{\ell, m} |E \cap [1, 2^{2n+4}]|.
\]

Since $\overline{d}_{E}(E') = 1$, we have $|E' \cap [1, x]| \geq (1 - \frac{c_{\ell, m}}{2}) |E \cap [1, x]|$ for infinitely many $x \in \N$. Let $x$ be any such number and let $n$ be such that $2^{2n+2} < x \leq 2^{2n+4}$.

Note that for any subset $J \subset [\ell+2, 2n]$ with $|J| = n-m$, we have
\[
a = s_1 - s_2 = \left( 2^{2n+1} + \sum_{i \in J \cup (A+1)} 2^i \right) - \left( 2^{2n+1} + \sum_{i \in J \cup A} 2^i \right)
\]
is a difference of two elements $s_1, s_2 \in E \cap [1,2^{2n+2}] \subset E \cap [1,x]$. The number of such representations is 
\[
 \binom{2n - \ell-1}{n-m} > c_{\ell, m} \left|E \cap \left[1, 2^{2n+4} \right] \right| \geq c_{\ell, m} \left|E \cap [1,x] \right|
\]
if $n$ is sufficiently large. 

Since $|E' \cap [1, x]| \geq (1 - \frac{c_{\ell, m}}{2}) |E \cap [1, x]|$, the number of representations $a = s_1 - s_2$ with $s_1, s_2 \in E \cap [1,x]$ and at least one of them not belonging in $E' \cap [1, x]$, is at most  $c_{\ell, m} |E \cap [1, x]|$. Thus there exists a representation $a = s_1 - s_2$ where $s_1, s_2 \in E'\cap [1,x]$. This shows that $E'-E' = \Z$, and we are done.
\end{proof}

With the notation from \cref{lm:generalisation}, in \cref{prop:ss'-z}, we just proved $E_a(x) \geq c_a E(x)$ for some $c_a>0$, for any $a$, and for
$x$ large enough in terms of $a$.
Yet for many $a$, the constant $c_a$ may be expected to be very small,
in view of Cusick's conjecture and partial results towards it such as
\cite{digits}.
When $c_a$ is positive but indeed very small on a sufficiently large set of integers $a$, we have the following relaxation of \cref{lm:generalisation} (in which both hypothesis and conclusion are slightly weaker compared to \cref{lm:generalisation}). 

\begin{proposition}
\label{lm:generalisationEps}
Let $E$ be a set of positive integers and $E(x)=|E\cap [1,x]|$ its counting function. Let $E_m(x)=|\{n\in E \cap [1, x]: n+m\in E\}|$. Assume that 
there is a sequence $(c_m)_{m\in \N}$ and an increasing bijection $f :\R_{>0}\rightarrow\R_{>0}$ such that the
following hold:
\begin{enumerate}
    \item $f^{-1}(x)=o(E(x))$ as $x$ tends to infinity.
    \item \label{hypii} $\forall m\in\N,\,\forall
    x>f^{-1}(m),$ one has $E_m(x)\leq c_m\cdot E(x)$.
\end{enumerate}
Assume additionally that 
there is a thick set $T$ such that $\lim_{m\in T,m\rightarrow\infty}c_m=0$.
Then for every $\epsilon >0$ there exists $A \subset E$ of lower relative density at least $1-\epsilon$ such that
$A-A$ is not syndetic. 
\end{proposition}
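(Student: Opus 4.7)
The plan is to adapt the proof of \cref{lm:generalisation}, with one key modification: rather than forcing the ``bad'' subset of $E$ to have relative density zero, I aim only to control it by $\epsilon$, and to do so I exploit the fact that $c_m \to 0$ along the thick set $T$ to engineer a summable correlation bound.

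First I would construct a thick auxiliary set $R = \bigcup_{k \in \N} I_k \subset T$ of the same form as in \cref{lm:generalisation}: each $I_k \subset T$ is an integer interval of length $k$, lying in $[g(k) - k + 1, g(k)]$ for an increasing bijection $g : \R_{>0} \to \R_{>0}$ which I am free to make grow arbitrarily fast. The novelty is to choose the $I_k$'s inductively so that $\max_{m \in I_k} c_m \leq \epsilon\, 2^{-k}/k$; this is possible because the thickness of $T$ supplies intervals of any prescribed length arbitrarily late in $T$, and $c_m \to 0$ along $T$. Consequently
\[
\sum_{k \in \N} \sum_{m \in I_k} c_m \leq \epsilon.
\]

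I would then mimic \cref{lm:generalisation} and define $C = \{s \in E : (s + I_k) \cap E = \varnothing \text{ for every } k \text{ with } g(k) \leq f(s)\}$, set $B = (C + R) \cap E$, and take $A = C \setminus B$. As before, if $a_1 - a_2 \in R$ with $a_1, a_2 \in A \subset C$, then $a_1 \in (C + R) \cap E = B$, contradicting $a_1 \notin B$; hence $(A - A) \cap R = \varnothing$, so $A - A$ is disjoint from a thick set and is therefore not syndetic. For any $s \in (E \setminus C) \cap [1, x]$ there are $k$ and $m \in I_k$ with $g(k) \leq f(s) \leq f(x)$ and $s + m \in E$; since then $m \leq g(k) \leq f(x)$, hypothesis (\ref{hypii}) applies with this $x$, giving
\[
|(E \setminus C) \cap [1, x]| \leq \sum_{k : g(k) \leq f(x)} \sum_{m \in I_k} E_m(x) \leq E(x) \sum_{k} \sum_{m \in I_k} c_m \leq \epsilon\, E(x).
\]
The bound $|B \cap [1, x]| = o(E(x))$ is copied verbatim from \cref{lm:generalisation}, using hypothesis (1) and the freedom to make $g$ grow fast. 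Adding up, $|A \cap [1, x]| \geq (1 - \epsilon) E(x) - o(E(x))$, so $\underline{d}_E(A) \geq 1 - \epsilon$.

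The only point where the argument genuinely departs from \cref{lm:generalisation} is the inductive construction in the first step, and this is the main thing to get right: one must slide each $I_k$ far enough into $T$ to push $c_m$ below $\epsilon\, 2^{-k}/k$, while simultaneously making $g$ grow quickly enough to ensure the $o(E(x))$ bound on $|B \cap [1, x]|$. Since both requirements amount to taking $g(k)$ large as a function of $k$, they are compatible and easy to realize by a diagonal choice; once this is done, the rest is bookkeeping inherited from the earlier proof.
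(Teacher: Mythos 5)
Your proposal is correct and is exactly the adaptation of the proof of \cref{lm:generalisation} that the paper has in mind (the paper omits the proof, noting only that it is "extremely close" to that of \cref{lm:generalisation}): you place the intervals $I_k$ far enough out in $T$ that $\sum_k\sum_{m\in I_k}c_m\leq\epsilon$, which converts the $o(E(x))$ bound on $|(E\setminus C)\cap[1,x]|$ into an $\epsilon E(x)$ bound, while the treatment of $B$ and the non-syndeticity of $A-A$ carry over unchanged. The two lower-bound constraints on $g$ (sliding $I_k$ deep into $T$, and growing fast enough for the $o(E(x))$ estimate on $B$) are indeed compatible, as you note, so the argument is complete.
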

We omit the proof since it is extremely close to that of \cref{lm:generalisation}. 
Note that given a sequence $(c_m)$ of positive real numbers, the negation of the statement
that there exists a thick set $T$ such that $\lim_{m\in T,m\rightarrow\infty}c_m=0$
is the statement
that there exists $\eta>0$ such that $\{m\in\N:c_m>\eta\}$ is syndetic.

In practice, \cref{lm:generalisationEps} is unwieldy because the hypothesis \eqref{hypii}
is very difficult to prove.
A more manageable hypothesis would be for instance $\lim_{x\rightarrow\infty} E_m(x)/E(x)\leq c_m$, but it is not clear whether such an hypothesis is sufficient.
On the other hand, the negation of an hypothesis of this form leads to a conclusion
of a positive upper Banach density, as stated in the proposition below, of which we also omit the proof.
\begin{proposition}
\label{th:largecm}
Let $E\subset\N$.
Suppose $c_m:=\liminf_{x \to \infty} E_m(x)/E(x)$ is such that
$\{m:c_m>\eta\}$ is syndetic (or even has positive lower density) for some $\eta>0$.
Then $d^*(E) > 0$.
\end{proposition}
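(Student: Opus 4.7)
The plan is a direct double-counting argument. Put $S := \{m \in \N : c_m > \eta\}$. We may assume $\underline{d}(S) \geq \delta > 0$, since a syndetic subset of $\N$ automatically has positive lower density (if the gaps of $S$ are bounded by $K$, then $|S \cap [1,N]| \geq N/K - 1$). Fix $N$ large (to be chosen) and let $S_N := S \cap [1,N]$; the goal is to exhibit a window of width $N$ containing at least $cN$ elements of $E$ for some constant $c > 0$ independent of $N$, which immediately gives $d^*(E) \geq c$.

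Because $S_N$ is finite and $c_m > \eta$ for each $m \in S_N$, we can choose $x \geq x_0(N)$ large enough that $E_m(x) \geq \eta E(x)$ simultaneously for all $m \in S_N$. Summing over $m \in S_N$ and swapping the order of summation,
\[
\eta\, |S_N|\, E(x) \;\leq\; \sum_{m \in S_N} E_m(x) \;=\; \sum_{n \in E \cap [1,x]} \bigl|\{m \in S_N : n + m \in E\}\bigr| \;\leq\; \sum_{n \in E \cap [1,x]} |E \cap (n, n+N]|,
\]
where the last inequality uses $S_N \subset [1,N]$. Since the right-hand side is a sum of $E(x)$ nonnegative terms, the pigeonhole principle yields some $n \in E \cap [1,x]$ with
\[
|E \cap (n, n+N]| \;\geq\; \eta\, |S_N|.
\]

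Finally, the hypothesis $\underline{d}(S) \geq \delta$ gives $|S_N| \geq (\delta/2) N$ for all sufficiently large $N$, so for every such $N$ there is an interval of length $N$ meeting $E$ in at least $(\eta \delta / 2) N$ points. This shows $\sup_M |E \cap [M, M+N)|/N \geq \eta\delta/2$ for all large $N$, and therefore $d^*(E) \geq \eta \delta / 2 > 0$. The argument is not technically deep; the only subtlety worth flagging is the order of quantifiers: since $c_m > \eta$ only controls $E_m(x)$ for $x$ large \emph{depending on} $m$, one must fix $N$ first (so that $S_N$ is finite) before picking $x$.
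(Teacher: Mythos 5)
Your proof is correct. The paper explicitly omits its own proof of this proposition, so there is nothing to compare against, but your double-counting argument (sum $E_m(x)$ over $m\in S\cap[1,N]$, swap the order of summation, and pigeonhole to find a single $n$ whose window $(n,n+N]$ is dense in $E$) is the natural and presumably intended one; you also correctly handle the one real subtlety, namely fixing $N$ before letting $x\to\infty$ so that the finitely many conditions $E_m(x)\geq\eta E(x)$ hold simultaneously.
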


\section{Bohr compactification and proof of Theorem \ref{thm:pws}}
\label{sec:proof_of_bohr_compactification}
	
Let $\T = \R/\Z$ be the $1$-dimensional torus and $\T_d$ be $\T$ endowed with the discrete topology. The \emph{Bohr compactification} of $\Z$ is the Pontryagin dual of $\T_d$ and denoted by $b\Z$. Then $b \Z$ is a compact abelian group. For every $n \in \Z$, let $\tau(n) \in b\Z$ be the character on $\T_d$ defined by $\tau(n)(\chi) = \chi(n)$, for every $\chi \in \T \cong \hat{\Z}$. Then $\tau(\Z)$ is dense in $b\Z$ (for a proof, see \cite[Theorem 1.8.2]{rudin}), and as a result $\tau(\N)$ is also dense in $b \Z$. 
We use $m_{b\Z}$ to 
denote the normalized Haar measure on $b\Z$. We remark that $(b\Z, \tau)$ has the following universal property: if $K$ is any compact Hausdorff topological group and $\phi: \Z \to K$ is a homomorphism, then there is a unique continuous homomorphism $\tilde{\phi}: b\Z \to K$ such that $\phi = \tilde{\phi} \circ \tau$. 




We say a sequence $E = \{c_n : n\in \N\} \subset \N$ with $c_1 < c_2 < \cdots$ is good for the pointwise ergodic theorem  if for any measure preserving system $(X, \mathcal{B}, \mu, T)$, for any $f \in L^\infty(X)$, the pointwise limit $\lim_{N \rightarrow \infty} \frac{1}{N} \sum_{n=1}^N f(T^{c_n} x)$ exists for almost all $x \in X$. 
  
By works of Bourgain \cite{bourgain}, Wierdl \cite{wierdl} and Nair \cite{nair}, the following sequences are good for the pointwise ergodic theorem: $\{ P(n): n \in \N \}, \{ \lfloor Q(n) \rfloor: n \in \N \}, \{p_n: n \in \N\}, \{P(p_n): n \in \N\}$. Here $P$ is any polynomial in $\Z[x]$, $Q$ is any polynomial in $\R[x]$, $p_n$ is the $n$-th prime and $\lfloor \cdot \rfloor$ denotes the integer part.

\cref{thm:pws} follows from a more general result. 
\begin{proposition}
\label{thm:main-good-recurrence}
Let $E \subset \N$ be such that its closure in $b\Z$ has measure $0$, i.e. $m_{b\Z}(\overline{\tau(E)}) = 0$.
Then for every $\epsilon > 0$, there exist $A \subset E$ with $\overline{d}_{E}(A) > 1 - \epsilon$ such that $E - A$ is not piecewise syndetic. In particular, $R = \N \setminus (E - A)$ is thickly syndetic but is not $E$-intersective.

Furthermore, if the natural enumeration of $E$ is good for the pointwise ergodic theorem, then $\overline{d}_{E}(A)$ can be replaced by $d_{E}(A)$.
\end{proposition}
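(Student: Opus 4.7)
The plan is to exploit the tension between the topological density of $\tau(\N)$ in $b\Z$ and the measure-theoretic smallness of $K=\overline{\tau(E)}$. Let $\mu$ be a weak-$*$ subsequential limit of the empirical measures $\mu_N = |E\cap[N]|^{-1}\sum_{n\in E\cap[N]}\delta_{\tau(n)}$; since $\tau(E)\subset K$, the measure $\mu$ is a Borel probability measure supported on $K$. By inner regularity of $\mu$, choose a compact $L\subset K$ with $\mu(L)>1-\epsilon$. Since $m_{b\Z}(L)=0$ and $L$ is compact, outer regularity of Haar measure together with compactness (covering $L$ by finitely many translates of a symmetric open neighborhood $U$ of $0$ and applying the standard fact that $m_{b\Z}(K+U)$ can be made arbitrarily small when $U$ shrinks) lets me pick an open continuity set $V\supset L$ of very small Haar measure. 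Define $A=E\cap\tau^{-1}(V)$. Then by the Portmanteau theorem applied to the subsequential limit defining $\mu$, one has $\overline{d}_E(A)\geq\mu(V)\geq\mu(L)>1-\epsilon$.

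Next, I would show $E-A$ is not piecewise syndetic by contradiction. Suppose $(E-A)+F$ is thick for some finite $F\subset\N$, so $[a_n,a_n+\ell_n]\subset (E-A)+F$ for some integers $a_n$ and $\ell_n\to\infty$. By compactness of $b\Z$, pass to a subsequence so that $\tau(a_n)\to y^*$. For each fixed $j\geq 0$, eventually $a_n+j\in(E-A)+F$, so $\tau(a_n)+\tau(j)\in\tau(E)-\tau(A)+\tau(F)\subset K-\tau(A)+\tau(F)$; letting $n\to\infty$ gives $y^*+\tau(j)\in\overline{K-\tau(A)+\tau(F)}$. Using compactness of $K$ and finiteness of $F$, one checks $\overline{K-\tau(A)+\tau(F)}=K-\overline{\tau(A)}+\tau(F)$. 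Density of $\tau(\N)$ in $b\Z$ then forces the closed set $K-\overline{\tau(A)}+\tau(F)$ to equal all of $b\Z$.

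This is where the construction of $V$ must bite: since $\overline{\tau(A)}\subset\overline{V}$, we must rule out $K-\overline{V}+\tau(F)=b\Z$. Writing $V$ as a finite union of sets of the form $y_i+U$ for $y_i\in L$ and $U$ a small symmetric open neighborhood of $0$ chosen with $m_{b\Z}(K-\overline{U})$ extremely small (by the same compactness estimate used earlier, applied to $K-\overline U = K+(-\overline U)$), one bounds $m_{b\Z}(K-\overline{V})\leq \sum_i m_{b\Z}(K-\overline{U})$, which can be made $<1/|F|$ by shrinking $U$ further. Then subadditivity yields $m_{b\Z}(K-\overline{V}+\tau(F))\leq |F|\,m_{b\Z}(K-\overline{V})<1$, contradicting $K-\overline{V}+\tau(F)=b\Z$. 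The main obstacle is that $V$ is chosen once while $F$ is arbitrary; the way around this is to absorb the dependence on $F$ into the choice of $A$ via a diagonal construction, taking $V=\bigcap_r V_r$ where the $V_r$ form a decreasing sequence of neighborhoods tuned so that $m_{b\Z}(K-\overline{V_r})<2^{-r}$, and showing the density of $A$ still exceeds $1-\epsilon$ by a summable-errors argument.

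Finally, the ``furthermore'' upgrade from $\overline{d}_E$ to $d_E$ uses that the rotation $T_{\tau(1)}$ on $(b\Z,m_{b\Z})$ is uniquely ergodic, so for any continuity set $V$ the ergodic averages $\frac{1}{N}\sum_{n=1}^N 1_V(y+\tau(c_n))$ converge to $m_{b\Z}(V)$ (against the envelope $\mu$, by an approximation argument) for a.e.\ $y$ whenever $E=\{c_n\}$ is good for the pointwise ergodic theorem; transferring this a.e.\ statement to the specific starting point needed, via the $T_{\tau(1)}$-invariance of the relevant sets, gives genuine convergence of $\mu_N(V)$ to $\mu(V)$ and hence $d_E(A)=\mu(V)>1-\epsilon$.
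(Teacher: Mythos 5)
Your density lower bound for $A$ (via weak-$*$ limits of the empirical measures and Portmanteau) is workable, and your derivation that piecewise syndeticity of $E-A$ would force $K-\overline{\tau(A)}+\tau(F)=b\Z$ is essentially the same covering statement the paper extracts. The fatal problem is the tool you use to contradict that covering: Haar measure. The inequality $m_{b\Z}(K-\overline{V})<1/|F|$ that your argument needs is unobtainable. Since $L\subset \overline V$, you have $K-\overline V\supset K-L$, and the difference set of a Haar-null compact set can have positive --- even full --- Haar measure. This is not a pathology but the generic situation here: for $E=\P$, Pintz's theorem gives that $\P-\P$ is syndetic, so finitely many translates of $\overline{\tau(\P)}-\overline{\tau(\P)}\supset\overline{\tau(\P-\P)}$ cover $b\Z$ and hence $m_{b\Z}(K-K)>0$; choosing $L$ with $\mu(L)$ close to $1$ does not make $K-L$ measure-theoretically small. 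Your covering estimate also fails on its own terms: writing $V=\bigcup_i(y_i+U)$ gives $m_{b\Z}(K-\overline V)\leq (\#i)\,m_{b\Z}(K-\overline U)$, but the number of translates of $U$ needed to cover the compact set $L$ blows up as $U$ shrinks, so the product has no reason to tend to $0$. The diagonal construction $V=\bigcap_r V_r$ cannot rescue this, since the premise $m_{b\Z}(K-\overline{V_r})<2^{-r}$ is exactly what is false (and intersecting destroys openness, hence the Portmanteau step).

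The missing idea is that the obstruction must be \emph{topological}, not measure-theoretic: a closed subset of $b\Z$ with empty interior can have full Haar measure, and conversely the relevant set in the correct proof may have large measure. Concretely, since $b\Z$ is separable (as $\tau(\Z)$ is a countable dense subset) and $m_{b\Z}(K)=0$, the set $b\Z\setminus\bigcup_n(x_n-K)$ has full measure for any countable dense $\{x_n\}$, so by regularity it contains a compact $F$ with $m_{b\Z}(F)>1-\epsilon$, and by construction $K+F$ misses every $x_n$, i.e.\ has empty interior. One then takes $A=\{n: z-\tau(n)\in F\}\cap E$ for a translate $z$ chosen (by Fatou's lemma applied to $\frac1N\sum_{n\le N}1_F(z-\tau(c_n))$) so that $\overline d_E(A)\geq m_{b\Z}(F)$; then $z+\tau(E-A)\subset K+F$, and the covering statement $b\Z=\bigcup_{a\in F'}(K+F+\tau(a))$ that piecewise syndeticity would force is refuted by Baire category (finitely many closed sets covering $b\Z$ means one has nonempty interior), not by subadditivity of measure. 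Your ``furthermore'' paragraph is in the right spirit --- replacing the limsup/Fatou step by an actual limit via the pointwise ergodic theorem --- but it cannot stand without a correct main argument.
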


	In \cite{Dressler-Pigno-Haarmeasure}, Dressler and Pigno showed that the closures of the following sets in $b\Z$ have measure zero:
\begin{enumerate}
    \item The set of prime powers $\{p^n: p \in \P, n \in \N\}$. This explains why \cref{thm:pws} follows from \cref{thm:main-good-recurrence}.
    
    \item The set of sums of two squares $\{x^2 + y^2: x, y \in \Z\}$.
    
    \item The set of square-full numbers, that is, the set of numbers $n$ so that every exponent in the prime factorization of $n$ is at least two.
\end{enumerate}
In Appendix \ref{sec:closure_measure_zero}, we give more examples of such sets:

\begin{enumerate}[resume]
    \item The set of values of a polynomial of degree >1, i.e. $\{ P(n) : n \in \Z \}$, where $P \in \Z[x], \deg P > 1$.
    
    \item The set of values of a binary quadratic form, i.e. $\{ a x^2 + b xy + c y^2: x, y \in \Z\}$, whose discriminant $D = b^2 - 4 ac$ is not a perfect square.
    
    \item More generally, the set of integers represented by a norm form,\footnote{A norm form is a homogeneous form $F(x_1, \ldots, x_d)= N_{K/\Q}(x_1 \omega_1 + \cdots + x_d \omega_d)$, where $K$ is an algebraic number field of degree $d \geq 2, \{\omega_1 , \ldots, \omega_d\}$ is a basis of the ring of integers of $K$ as a $\Z$-module, and $N_{K/\Q}$ denotes the norm.
    } e.g. $\{x^3 + 2y^3 + 4z^3 - 6xyz: x, y, z \in \Z \}$.
\end{enumerate}

The fact that all the examples of set $E$ presented above have zero Banach density is not a coincidence. It is because for $E \subset \N$, we always have $m_{b\Z}(\overline{\tau(E)}) \geq d^*(E)$, and so if $E$ has positive upper Banach density, it will not satisfy the hypothesis of \cref{thm:main-good-recurrence}.

As mentioned above, Theorem \ref{thm:main-good-recurrence} applies to $E=\{P(n):n\in\N\}$, where $P \in \Z[x]$ has degree $\geq 2$. Note that for such sets, we also have Theorem \ref{th:squares}, which says that there exists a thick set
(however, not thickly syndetic) which is not chromatically $E$-intersective.

	In the proof
of Theorem \ref{thm:main-good-recurrence},	
	we make use of the following two lemmas from \cite{griesmer-dense-set}. For completeness we include the short proofs. Note that the compact abelian groups appearing in this section are not assumed to be metrizable.

 The first lemma says that we can create sumsets with large measure and empty interior, if the group is separable.
 
	\begin{lemma}[{\cite[Lemma 2.7]{griesmer-dense-set}}]
		\label{lem:empty-interior}
		Let $K$ be a separable compact abelian group with the normalized Haar measure $m_K$ and let $E \subset K$ be a compact set with $m_K(E) = 0$. For all $\epsilon > 0$, there exists a compact set $F \subset K$ with $m_K(F) > 1 - \epsilon$ such that $E + F$ has empty interior. 
	\end{lemma}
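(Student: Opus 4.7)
The plan is to exploit separability to construct $F$ as a compact set of large Haar measure that is disjoint from a certain $F_\sigma$ null set built from translates of $E$; this will make it impossible for the Minkowski sum $E+F$ to meet any point of a dense set that lies inside its supposed interior.

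Concretely, I would first choose a countable dense subset $\{k_n : n \in \N\}$ of $K$ (using separability) and set $G := \bigcup_{n \in \N}(k_n - E)$. Each translate $k_n - E$ is compact, being a continuous image of the compact set $E$, and has Haar measure zero by translation invariance and invariance of $m_K$ under inversion. Therefore $G$ is $F_\sigma$ with $m_K(G) = 0$, so $m_K(K \setminus G) = 1$.

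Next, since the normalized Haar measure on the compact Hausdorff group $K$ is a Radon measure, inner regularity lets me extract a compact $F \subset K \setminus G$ with $m_K(F) > 1 - \epsilon$. By construction $F \cap (k_n - E) = \varnothing$ for every $n$, equivalently $k_n \notin E + F$ for every $n$. Suppose for contradiction that $E + F$ has nonempty interior, containing an open set $U$. Density of $\{k_n\}$ in $K$ forces some $k_n$ into $U \subset E+F$, contradicting the previous sentence. Hence $E + F$ has empty interior, as claimed.

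The main obstacle is essentially measure-theoretic rather than combinatorial: the whole argument hinges on inner regularity of Haar measure to approximate the Borel set $K \setminus G$ from within by a compact set of measure at least $1-\epsilon$. Separability is crucial at exactly one point, namely to keep $G$ a countable union of null sets and hence itself null; without it, replacing $\{k_n\}$ by an uncountable dense net could make the union have positive, or even full, measure and the argument would collapse.
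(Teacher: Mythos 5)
Your proof is correct and is essentially the paper's own argument: the paper defines $G := K \setminus \bigcup_{n}(x_n - E)$ and you define its complement, but the construction, the use of inner regularity to extract $F$, and the density argument to conclude empty interior are identical.
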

	 
\begin{proof}
    Let $X=\{ x_n \}_{n=1}^\infty$ be a dense, countable subset of $K$. 
    By taking $G=K \setminus \bigcup_{n=1}^\infty (x_n - E)$, we have $x_n \not \in E + G$ for all $n$ and since $m_K(E) = 0$, $m_K(G)=1$.
    By regularity of the Haar measure on a compact group, $G$ contains a compact set $F$ of measure more than $1-\epsilon$,
    and $(E + F)\cap X\subset (E + G) \cap X = \varnothing$.
    Thus $E + F$ has empty interior.
\end{proof}

The next lemma says that the set of return times to a closed set with empty interior, is not piecewise syndetic.
	
\begin{lemma}{\cite[Lemma 4.1]{griesmer-dense-set}}
	\label{lem:pw-syndetic}
		Let $K$ be a compact abelian group and $\tau: \Z \to K$ be a homomorphism such that $\tau(\N)$ is dense in $K$. Let $U$ be a compact subset of $K$ with empty interior. Then the set  $\{n \in \N: \tau(n) \in U\}$ is not piecewise syndetic.
\end{lemma}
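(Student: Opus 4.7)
The plan is to argue by contradiction, combining a standard shift manipulation for piecewise syndeticity with minimality of the translation action of $\tau(1)$ on $K$. Assume $S := \{n \in \N : \tau(n) \in U\}$ is piecewise syndetic, so there is a finite $F \subset \N$ such that $B := \bigcup_{f \in F}(S - f)$ is thick. Since $n \in S - f$ is equivalent to $\tau(n) \in U - \tau(f)$, we have $B = \{n \in \N : \tau(n) \in U'\}$, where $U' := \bigcup_{f \in F}(U - \tau(f))$ is a finite union of translates of $U$. Each translate $U - \tau(f)$ is compact with empty interior, and a finite union of closed sets with empty interior in any topological space is itself closed with empty interior (a short induction: a nonempty open $W$ inside the union must, after intersecting with the complement of one of the $U-\tau(f)$, remain a nonempty open subset of the smaller union, reducing to the singleton case). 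Hence $U'$ is compact with empty interior.

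Next, I would verify that the map $R : K \to K$, $x \mapsto x + \tau(1)$, is a minimal homeomorphism of $K$. The forward orbit of $0$ under $R$ is exactly $\tau(\N)$, which is dense by hypothesis; and since translations in any topological group are homeomorphisms, every orbit is a translate of the orbit of $0$ and is therefore also dense. The key consequence of minimality is the standard fact that forward return times to any nonempty open set $W \subset K$ are syndetic in $\N$: minimality yields $\bigcup_{n \in \N} R^{-n}W = K$, and compactness of $K$ produces a finite subcover $K = \bigcup_{i=1}^{k} R^{-n_i}W$, which bounds the gaps in $\{n \in \N : \tau(n) \in W\}$.

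Applying this with $W = K \setminus U'$, which is open (since $U'$ is closed) and nonempty (since $U'$ has empty interior while $K$ is open in itself), the return-time set $\{n \in \N : \tau(n) \in W\}$ is syndetic in $\N$ and, by construction, is disjoint from $B$. This contradicts the elementary fact that every thick set meets every syndetic set, and the proof is complete. The only point that requires care, rather than being a genuine obstacle, is that $K$ is not assumed metrizable: the finite-union argument is purely point-set topological, the minimality step is immediate from $R$ being a group translation with a dense orbit, and the syndeticity of return times uses only compactness, so the whole argument goes through without any metrizability or separability assumption.
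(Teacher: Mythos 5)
Your proof is correct and takes essentially the same approach as the paper: unpack piecewise syndeticity into a thick union of translates, identify that union as the set of $\tau$-return times to a compact nowhere dense subset of $K$, and derive a contradiction from minimality of the translation action via syndeticity of return times to nonempty open sets. The only cosmetic differences are that you prove the syndeticity of return times directly from compactness rather than citing Birkhoff's theorem, and you observe at the outset that the finite union of translates of $U$ is nowhere dense instead of arguing that the union cannot cover $K$.
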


 \begin{proof} Suppose for a contradiction that $R:=\{n \in \N: \tau(n) \in U\}$ is piecewise syndetic. Then there is a finite set $A \subset \N$ such that $R': = \bigcup_{a \in A} (R+a)$ is thick. 
 Note that $R' = \{n \in \N: \tau(n) \in \bigcup_{a \in A} (U + \tau(a) )\}$. 
 
 We claim that $K = \bigcup_{a \in A} (U + \tau(a) )$. Suppose this is not true. Then $V: = K \setminus \bigcup_{a \in A} (U + \tau(a) )$ is nonempty and open (since $U$ is compact). Since $\tau(\N)$ is dense in $K$, the action of $\N$ on $K$ given by $T_n(x) = x + \tau(n)$ for all $x \in K$, defines a minimal topological dynamical system. By Birkhoff's theorem (see \cite[Theorem 1.15]{Furstenberg81}), the set $\N \setminus R' = \{n \in \N: \tau(n) \in V\}$ is syndetic. But this contradicts the fact that $R'$ is thick.
 
 Hence, $K = \bigcup_{a \in A} (U + \tau(a) )$, so one of the $U + \tau(a)$ has a non-empty interior, and $U$ has a non-empty interior. This is a contradiction.
 \end{proof}
 
In order to prove  \cref{thm:main-good-recurrence}, we need a new lemma.	\begin{lemma}\label{lem:SameIntegral}
		Let $K$ be a compact abelian group and $\tau: \Z \to K$ an arbitrary map.
  Then for every $E \subset \N$ and every measurable set $D \subset K$, there exists $z \in K$ such that 
		\[
		\overline{d}_{E}(\{n \in \N: z - \tau(n) \in D\}) \geq m_K(D).
		\]
		
	\end{lemma}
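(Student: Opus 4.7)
The plan is to prove the lemma by averaging over $z \in K$ and applying the reverse Fatou lemma. For each fixed $n \in \N$, translation invariance of Haar measure gives
\[
\int_K \mathbf{1}_D(z - \tau(n))\, dm_K(z) = m_K(D + \tau(n)) = m_K(D),
\]
so the ``pointwise in $z$'' density we care about has the right average over $K$. Note that $\tau$ being an arbitrary (not necessarily homomorphism) map causes no difficulty, since we are only using translation invariance for each individual value $\tau(n)$.

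Concretely, I would define, for $N$ large enough that $E \cap [N] \neq \varnothing$,
\[
f_N(z) \;=\; \frac{|\{n \in E \cap [N] : z - \tau(n) \in D\}|}{|E \cap [N]|} \;=\; \frac{1}{|E \cap [N]|}\sum_{n \in E \cap [N]} \mathbf{1}_D(z - \tau(n)).
\]
Each $f_N$ is measurable (a finite sum of measurable functions) and satisfies $0 \leq f_N \leq 1$. By Fubini's theorem applied to the finite sum,
\[
\int_K f_N(z)\, dm_K(z) \;=\; \frac{1}{|E \cap [N]|}\sum_{n \in E \cap [N]} \int_K \mathbf{1}_D(z - \tau(n))\, dm_K(z) \;=\; m_K(D).
\]

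Now observe that $\overline{d}_E(\{n \in \N : z - \tau(n) \in D\}) = \limsup_{N \to \infty} f_N(z)$ for every $z \in K$. Since $f_N \leq 1$ is dominated by the integrable constant function $1$ on the finite-measure space $K$, the reverse Fatou lemma yields
\[
\int_K \limsup_{N \to \infty} f_N(z)\, dm_K(z) \;\geq\; \limsup_{N \to \infty} \int_K f_N(z)\, dm_K(z) \;=\; m_K(D).
\]
Since the integrand is bounded by $1$ and its integral is at least $m_K(D)$, there must exist some $z \in K$ with $\limsup_{N \to \infty} f_N(z) \geq m_K(D)$, giving the desired conclusion.

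There is no real obstacle here; the argument is a clean averaging plus reverse Fatou. The only thing to be careful about is the joint measurability in $z$ (immediate from the finite-sum definition of $f_N$) and the use of reverse Fatou (justified by the uniform bound $f_N \leq 1$ on the probability space $(K, m_K)$). Neither the separability of $K$ nor any structure on $\tau$ beyond being a map $\Z \to K$ is used.
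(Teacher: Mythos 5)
Your proof is correct and follows essentially the same approach as the paper's: average the indicator $1_D(z - \tau(n))$ over $z \in K$ using translation invariance of Haar measure, then apply the (reverse) Fatou lemma to pass from the average $m_K(D)$ to a pointwise $\limsup$ bound at some $z$. Your choice of normalization $\frac{1}{|E \cap [N]|}\sum_{n \in E \cap [N]}$ versus the paper's enumeration $\frac{1}{N}\sum_{n \leq N} 1_D(z - \tau(c_n))$ is an equivalent bookkeeping convention, and your explicit mention of ``reverse Fatou'' is a slightly more precise name for the inequality the paper invokes under the label ``Fatou's lemma'' (justified there, as here, by the uniform bound $f_N \leq 1$).
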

	
	\begin{proof}
		Enumerate $E$ as the sequence $1 \leq c_1 < c_2 < \ldots$. Let $f = 1_D$, the indicator function of $D$, and for $N \in \N$, define the function $f_N: K \to [0, 1]$ by
		\[
		  f_N(z) := \frac{1}{N} \sum_{n = 1}^N f(z - \tau(c_n)).
		\]
		Since $m_K$ is translation invariant, $\int_K f_N \ d m_K = \int_K f \ d m_K$. Because $f_N$ is bounded, Fatou's lemma implies
		\[
		\int_K \limsup_{N \to \infty} f_N \ d m_K \geq \limsup_{N \to \infty} \int_K f_N \ d m_K = \int_K f \ d m_K = m_K(D).
		\]
		Therefore, the set 
		\[
		S:= \left \{z \in K: \limsup_{N \to \infty} f_N \geq m_K(D) \right\} = \left \{z \in K: \limsup_{N \to \infty} \frac{1}{N} \sum_{n = 1}^N 1_D(z - \tau(c_n)) \geq m_K(D) \right\}
		\]
		has positive measure; in particular, this set is non-empty.
		
		Let $z$ be a point in $S$ and $A: = \{n \in \N: z - \tau(n) \in D\}$. Then 
		\[
		|A \cap \{c_1, \ldots, c_N\}| = \sum_{n = 1}^N 1_D(z - \tau(c_n)).
		\]
		Therefore, 
		\[
		\overline{d}_{E}(A) := \limsup_{N \to \infty} \frac{|A \cap \{c_1, \ldots, c_N\}|}{N} = \limsup_{N \to \infty} \frac{1}{N} \sum_{n = 1}^N 1_D(z - \tau(c_n)) \geq m_K(D). \qedhere
		\]
 \end{proof}

Equipped with these intermediate results, we may conclude this section.
	
	\begin{proof}[Proof of \cref{thm:main-good-recurrence}]
    Let $E \subset \N$ be such that $m_{b\Z}(\overline{\tau(E)}) = 0$ and let $\epsilon > 0$. By applying \cref{lem:empty-interior} for $K = b\Z$, there exists a compact set $F \subset b\Z$ such that $m_{b\Z}(F) > 1 - \epsilon$ and $\overline{\tau(E)} + F$ has an empty interior. \cref{lem:pw-syndetic} implies that for all $z \in b\Z$, 
		\[
		  E_z := \{n \in \N: z + \tau(n) \in \overline{\tau(E)} + F\}
		\]
		is not piecewise syndetic. 
		
		In view of \cref{lem:SameIntegral}, we can choose $z\in b\Z$ so that $A :=\{n \in \N: z - \tau(n) \in F\} \cap E$ satisfies 
		\[
		\overline{d}_{E}(A) \geq m_{b\Z}(F) > 1 - \epsilon. 
		\]
		Fixing this $z$, for any $e \in E$ and $a \in A$, we have
		\[
		z + \tau(e - a) = \tau(e) + z - \tau(a) \in \overline{\tau(E)} + F.
		\]
		Therefore, $(E - A) \cap \N \subset E_z$ and so $E - A$ is not piecewise syndetic.

  Finally, if $E$ is good for the pointwise ergodic theorem, then the upper relative density $\overline{d}_E$ in \cref{lem:SameIntegral} can be replaced by the relative density $d_E$, as a result, the same can be said about \cref{thm:main-good-recurrence}.
	\end{proof}


\begin{remark}\label{remark:zero-haar-measure-again}\,
   In the proof of Proposition \ref{thm:main-good-recurrence}, all we need is a compact abelian group $K$ and a group homomorphism $\tau: \Z \rightarrow K$ such that $\tau(\Z)$ is dense in $K$, and $b\Z$ is not the only choice for $K$. See \cite[Section 3]{gll} for a construction of all such groups $K$. However, as observed by Dressler and Pigno \cite[Theorem 1]{Dressler-Pigno-Haarmeasure}, $m_K(\overline{\tau(E)})$ is minimized when $K = b\Z$ (where $m_K$ is the probability Haar measure on $K$). Therefore, the choice $K = b\Z$ is optimal in the statement of Theorem \ref{thm:main-good-recurrence}.
\end{remark}

\section{The converse: \texorpdfstring{$E$-intersectivity}{E-intersectivity} implies intersectivity}

\label{sec:converse-prime-intersective-imply-n-intersective}

The goal of this section is to prove \cref{thm:e-intersective-implies-n-intersective}. It follows from a more general theorem below regarding sets of multiple recurrence.

\begin{definition}
    Let $E$ be an infinite subset of $\N$. For $k \in \N$, a set $S \subset \N^k$ is called a \emph{$k$-intersective set for $E$} if for any $A \subset E$ such that $\overline{d}_E(A) > 0$, there exists $(n_1, \ldots, n_k) \in S$ such that
    \[
        A \cap (A - n_1) \cap \ldots \cap (A - n_k) \neq \varnothing.
    \]
\end{definition} 
For example, Szemer\'edi's theorem \cite{Szemeredi75} says that for any $k$, the set $\{ (n, 2n, \ldots, kn) : n \in \N\} \subset \N^k$ is $k$-intersective for $\N$. The polynomial Szemer\'edi theorem \cite{Bergelson-Leibman} says that if $P_1, \ldots, P_k \in \Z[x]$ are polynomials without constant term, then the set $\{ (P_1(n), \ldots, P_k(n)) : n \in \N\}$ is $k$-intersective for $\N$. 

\cref{thm:e-intersective-implies-n-intersective} corresponds to the case $k = 1$ of the next proposition.

\begin{proposition}\label{thm:recurrence_with_N}
Let $E \subset \N$ be an infinite set and $k \in \N$. Every $k$-intersective set for $E$ is a $k$-intersective set for $\N$.
\end{proposition}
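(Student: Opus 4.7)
The plan is to prove the contrapositive: assuming $S \subset \N^k$ fails to be $k$-intersective for $\N$, I will produce a witness $A \subset E$ with $\overline{d}_E(A) > 0$ such that $A \cap (A-n_1) \cap \cdots \cap (A-n_k) = \varnothing$ for every $(n_1,\ldots,n_k) \in S$. The two ingredients are Furstenberg's Correspondence Principle and a reverse form of Fatou's lemma, exactly as the authors indicate.

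First, apply Furstenberg's Correspondence Principle to the hypothesized counterexample $A_0 \subset \N$ with $\overline{d}(A_0) > 0$: this yields a measure preserving system $(X, \mathcal{B}, \mu, T)$ and a set $B \in \mathcal{B}$ with $\mu(B) > 0$ such that, for every $(n_1,\ldots,n_k) \in S$,
\[
\mu\bigl(B \cap T^{-n_1}B \cap \cdots \cap T^{-n_k}B\bigr) = 0.
\]
Next, enumerate $E = \{c_1 < c_2 < \cdots\}$ and associate to each $x \in X$ the ``sampled orbit'' $A_x := \{n \in E : T^n x \in B\} \subset E$. Setting $f_N(x) := \frac{1}{N}\sum_{i=1}^N 1_B(T^{c_i}x)$, one has $\int f_N \, d\mu = \mu(B)$ by $T$-invariance, and $\limsup_N f_N(x) = \overline{d}_E(A_x)$ directly from the definition of relative density. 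Applying ordinary Fatou to the nonnegative bounded sequence $1 - f_N$ gives
\[
\int \overline{d}_E(A_x) \, d\mu(x) \;\geq\; \limsup_{N \to \infty} \int f_N \, d\mu \;=\; \mu(B) > 0,
\]
so the set $\{x : \overline{d}_E(A_x) > 0\}$ has positive $\mu$-measure.

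To conclude, I discard the negligible set of $x$ for which $A_x$ contains a forbidden configuration. For each tuple $\mathbf{n} = (n_1,\ldots,n_k) \in S$, write $G_{\mathbf{n}} := B \cap T^{-n_1}B \cap \cdots \cap T^{-n_k}B$; by choice of $B$ this is $\mu$-null, hence so is the countable union $\bigcup_{m \in \N} T^{-m} G_{\mathbf{n}}$. Taking a further countable union over $\mathbf{n} \in S$ (permissible because $S \subset \N^k$ is countable) produces a null set $\tilde{G}$. Picking any $x \notin \tilde{G}$ with $\overline{d}_E(A_x) > 0$ — which exists since a positive-measure set minus a null set is nonempty — the set $A := A_x$ is the desired witness: any $m \in A_x \cap (A_x - n_1) \cap \cdots \cap (A_x - n_k)$ with $\mathbf{n} \in S$ would force $T^m x \in G_{\mathbf{n}}$, contradicting $x \notin \tilde{G}$.

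There is no genuinely hard step here. The one subtlety worth flagging is the direction of Fatou's lemma: one needs the reverse inequality for $\limsup$, which is why the uniform bound $f_N \leq 1$ (equivalently, applying Fatou to $1 - f_N$) is used. I do not expect to obtain $d_E$ instead of $\overline{d}_E$ from this argument, nor is it needed. The countability of $S \subset \N^k$ is the only cardinality input, and it is precisely what makes the union of ``bad'' orbits null.
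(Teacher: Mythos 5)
Your proof is correct and follows essentially the same route as the paper: contrapositive via Furstenberg's Correspondence Principle, then reverse Fatou applied to the orbit-sampling averages along $E$ to find a point $x$ whose sampled set $A_x$ has positive upper relative density. The only (cosmetic) divergence is that the paper pre-shrinks the set $A$ in the system to $A' = A \setminus \bigcup_{\mathbf{n}\in S}\bigl(A\cap T^{-n_1}A\cap\cdots\cap T^{-n_k}A\bigr)$ so the forbidden intersections become literally empty and every $x$ works, whereas you instead discard \emph{a posteriori} the $\mu$-null set of $x$ whose orbits ever hit a forbidden configuration — two equivalent ways of handling the same measure-zero issue.
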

\begin{proof}
 Suppose $S \subset \N^k$ is not a $k$-intersective set for $\N$. We will prove $S$ is not a $k$-intersective set for $E$. 

    Since $S$ is not a $k$-intersective set for $\N$, there exists $B \subset \N$ such that $\overline{d}(B) > 0$ and 
    \begin{equation}\label{eq:B-n-1}
        B \cap (B - n_1) \cap \ldots \cap (B - n_k) = \varnothing \text{ for all $(n_1, \ldots, n_k) \in S$.}
    \end{equation}
    By Furstenberg's Correspondence Principle \cite[Theorem 1.1]{Furstenberg77}, there exist a measure preserving system $(X, \mathcal{B}, \mu, T)$ and a set $A \subset X$ with $\mu(A) = \overline{d}(B)$ such that 
    \[
        \mu(A \cap T^{-h_1} A \cap \ldots \cap T^{-h_k} A) \leq \overline{d}(B \cap (B - h_1) \cap \ldots \cap (B - h_k)) \text{ for all $(h_1, \ldots, h_k) \in \N^k$.}
    \]
     Therefore, \eqref{eq:B-n-1} implies
		\[
		\mu(A \cap T^{-n_1} A \cap \ldots \cap T^{-n_k} A) = 0 \text{ for all } (n_1, \ldots, n_k) \in S.
		\]
		Define $A' = A \setminus \bigcup_{(n_1, \ldots, n_k) \in S} (A \cap T^{-n_1} A \cap \ldots \cap T^{-n_k} A)$. Then we have $\mu(A') = \mu(A) > 0$ and  
		\[
		A' \cap T^{-n_1} A' \cap \ldots \cap T^{-n_k} A'  = \varnothing \text{ for all $(n_1, \ldots, n_k) \in S$.}
		\]
		Therefore, by replacing $A$ with $A'$, we can assume 
		\[
		A \cap T^{-n_1} A \cap \ldots \cap T^{-n_k} A= \varnothing \text{ for all } (n_1, \ldots, n_k) \in S.
		\]
		It follows that for every $x \in X$, the set $A_x := \{n \in \N: T^n x \in A\}$ satisfies
        \[
        A_x \cap (A_x - n_1) \cap \ldots \cap (A_x - n_k) = \varnothing \text{ for all $(n_1, \ldots, n_k) \in S$.}
        \]

		Enumerate $E = \{c_1, c_2, ...\}$ in the increasing order. Let $f = 1_A \in L^{\infty}(X)$. For $N \in \N$, define the function
		\[
		f_N(x) = \frac{1}{N} \sum_{n=1}^N f(T^{c_n} x).
		\]
		Since $\mu$ is $T$-invariant, $\int_X f_N \ d\mu = \int_X f \ d \mu$. By Fatou's lemma:
		\[
		\int_X \limsup_{N \to \infty} f_N \ d \mu \geq \limsup_{N \to \infty} \int_X f_N \ d \mu = \int_X f \ d \mu = \mu(A). 
		\]
		Therefore, the set
		\[
		R := \{x \in X: \limsup_{N \to \infty} f_N (x)\geq \mu(A)\} 
		\]
		has positive measure; in particular, $R$ is non-empty.
		
		Let $x \in R$. Then
		\[
		|A_x \cap \{c_1, \ldots, c_N\}| = \sum_{n=1}^N 1_A(T^{c_n} x).
		\]
		Therefore,
		\[
		\overline{d}_E(A_x) = \limsup_{N \to \infty} \frac{|A_x \cap \{c_1, \ldots, c_N\}|}{N} = \limsup_{N \to \infty} f_N(x) \geq \mu(A) > 0.
		\]
		In other words, $B := A_x \cap E$ is a subset of $E$ of positive upper density. However,
  \[
    B \cap (B - n_1) \cap \ldots \cap (B - n_k) \subset A_x \cap (A_x - n_1) \cap \ldots \cap (A_x - n_k) = \varnothing
  \]
  for all $(n_1, \ldots, n_k) \in S$. In other words, $S$ is not a $k$-intersective set for $E$.
	\end{proof}

	\begin{remark}
	    We remark that the chromatic analogues of \cref{thm:recurrence_with_N} and \cref{thm:e-intersective-implies-n-intersective} are obvious: because for every $E\subset\N$, a partition of $\N$ automatically induce a partition of $E$, a chromatically $E$-intersective set is chromatically intersective.
	\end{remark}
	
\section{Chromatic intersectivity versus density intersectivity}

\label{sec:chromatic_vs_density}

\subsection{For an arbitrary ambient set}



\cref{thm:chromatic_E_not_density} is a corollary of the next proposition, which in turn was implicitly proved in Theorem 1.2 from \cite{Griesmer-separating-topological-recurrence-measurable}.
However, since the relative notion of $E$-intersective sets is not introduced in \cite{Griesmer-separating-topological-recurrence-measurable}, it requires us to go through the proof of \cite[Theorem 1.2]{Griesmer-separating-topological-recurrence-measurable} and verify all the steps in the relative context.
We perform this tedious task in \cref{sec:appendix_chromatic_not_density}.


\begin{proposition}
\label{th:griesmer}
    For every infinite set $E\subset\N$ and $\delta\in (0,1/2)$, there exists a set $S\subset \N$ which is chromatically $E$-intersective
    and a set $A\subset\N$ of upper density at least $\delta$ such that $S \cap (A - A) = \varnothing$.
\end{proposition}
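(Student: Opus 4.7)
The plan is to revisit the proof of Theorem 1.2 in \cite{Griesmer-separating-topological-recurrence-measurable}, which settles the case $E = \N$, and verify that each step carries through to an arbitrary infinite set $E \subset \N$. The construction, ultimately due to K{\v{r}}{\'{\i}}{\v{z}} \cite{Kriz87}, produces a compact abelian group $K$ with normalized Haar measure $m_K$, a continuous homomorphism $\phi: \Z \to K$ with dense image, and a closed symmetric subset $B \subset K$ bounded away from $0$, enjoying the following two properties: (i) for every finite Borel partition $K = \bigsqcup_{i=1}^k D_i$ some part $D_i$ satisfies $(D_i - D_i) \cap B \neq \varnothing$; and (ii) there exists a Borel set $I \subset K$ with $m_K(I) > \delta$ and $(I - I) \cap B = \varnothing$. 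Such a pair $(K, B)$ can be built as an inverse limit of finite Cayley graphs $\Cay(G_n, B_n)$ of the type provided by K{\v{r}}{\'{\i}}{\v{z}}, whose chromatic numbers tend to infinity while their independence ratios remain at least $\delta$.

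Setting $S := \{n \in \N : \phi(n) \in B\}$, the existence of a dense $A \subset \N$ disjoint in differences from $S$ follows from the Fatou-type averaging argument of \cref{lem:SameIntegral} applied with ambient set $\N$: we obtain $z \in K$ for which $A := \{n \in \N : z - \phi(n) \in I\}$ satisfies $\overline{d}(A) \geq m_K(I) > \delta$. For any $a, b \in A$, the image $\phi(a - b) \in I - I$ lies outside $B$ by property (ii), so $a - b \notin S$ and $S \cap (A - A) = \varnothing$.

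The genuinely new ingredient is chromatic $E$-intersectivity of $S$; note that this does \emph{not} follow formally from chromatic $\N$-intersectivity, since extending a partition of $E$ to a partition of $\N$ may send the monochromatic $S$-difference into $\N \setminus E$. Given a partition $E = \bigsqcup_{i=1}^k E_i$, the plan is to push the coloring forward by $\phi$: choose a weak-$*$ accumulation point $\mu$ of the empirical measures $\frac{1}{|E \cap [N]|}\sum_{n \in E \cap [N]} \delta_{\phi(n)}$ along a subsequence, write $\mu = \sum_i \mu_i$ with $\mu_i$ the contribution of $E_i$, pigeonhole to select $i$ with $\mu_i(K) \geq 1/k$, and approximate $\mu_i$ by a Borel set $D_i$ of positive mass. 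Applying property (i) to a Borel partition refining $\{D_i\}$ produces a pair of points in some Borel cell whose difference lies in $B$; using the density of $\phi(E_i)$ in $\supp(\mu_i)$ together with the clopen nature of $B$, this pair can be realized as $\phi(n), \phi(m)$ with $n, m \in E_i$, hence $n - m \in S$.

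The main obstacle is that Griesmer's original argument is written for averages over $\N$ and invokes translation invariance of these averages on $\Z$ at several junctures. Adapting every step to averages restricted to $E$ requires replacing each appeal to invariance with an appropriate Fatou-lemma substitute, in the spirit of \cref{lem:SameIntegral}, and checking that the empty-interior and non-piecewise-syndeticity lemmas underpinning the construction remain valid when sampled along sparse enumerations of $E$. This verification is expected to be mechanical but lengthy, and we defer it in full to \cref{sec:appendix_chromatic_not_density}.
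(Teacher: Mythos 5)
Your reduction of the statement to a single compact group $K$, a single closed set $B\subset K$, and the preimage $S=\{n\in\N:\phi(n)\in B\}$ has two gaps, the second of which is fatal for arbitrary $E$. First, the existence of one pair $(K,B)$ satisfying your property (i) for \emph{every} finite Borel partition of $K$ together with property (ii) is precisely the hard content of the K{\v{r}}{\'{\i}}{\v{z}}-type construction, and you assert it without proof. What the construction actually supplies is a sequence of finite Cayley graphs (on Hamming balls $H_{2k+1}(\mathbf{1})\subset\F_2^{d_k}$ with $d_k\to\infty$, \cref{lm:twofive}) whose chromatic numbers tend to infinity while still admitting independent sets of density about $\delta$; accordingly the paper does not build one closed $B$, but assembles $S$ as an increasing union $\bigcup_k S_k$ of \emph{finite} sets, each $k$-chromatically $E$-intersective (\cref{lm:threeeight}), gluing the finite non-intersectivity witnesses via the concatenation lemma (\cref{lm:threesix}). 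Your Fatou argument for producing $A$ is fine in spirit (it mirrors \cref{lem:SameIntegral}), but it presupposes the unestablished single-$B$ package.

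Second, and decisively, your transfer of chromatic intersectivity from partitions of $K$ to partitions of $E$ does not work. The weak-$*$ limits $\mu_i$ of empirical measures along $E$ are all supported on the closed set $\overline{\phi(E)}$, which for sparse $E$ is typically Haar-null and nowhere dense in $K$ (this is exactly the phenomenon exploited in \cref{sec:proof_of_bohr_compactification}). Hence property (i), applied to a Borel partition refining approximations of the $\mu_i$, can only be expected to locate its monochromatic $B$-difference in the complementary cell of full measure, which says nothing about the $E_i$; and even given $x,y\in\supp\mu_i$ with $x-y\in B$, realizing this as $\phi(n)-\phi(m)$ with $n,m\in E_i$ requires density of $\phi(E_i)$ near $x$ and $y$, which the support of a weak-$*$ limit does not provide. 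The working mechanism is graph-theoretic rather than measure-theoretic: one embeds a finite subgraph of $\Cay(U)$ of large chromatic number into $\Cay(E,\rho^{-1}(U)\cap(E-E))$ using that $\rho(E')$ is dense in the group and that $U$ is \emph{open} (\cref{lem:transfer_to_discrete}, \cref{lm:fouroneiii}); a monochromatic edge then yields $n,m$ in the same $E_i$ with $n-m\in S$. This forces the group to be chosen depending on $E$ --- the paper takes $K=\T^d$ and picks $\alpha$ with $E'\alpha$ dense for a congruence-restricted $E'\subset E$ (\cref{lm:sixtwo}) --- whereas your fixed profinite $K$ only has $\phi(\Z)$ dense, and $\phi(E)$ need not be dense in it (for instance $E\subset 2\N$ lands in a proper closed subgroup whenever $K$ has a quotient of even order). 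Until this step is replaced, the chromatic $E$-intersectivity of your $S$ is not established.
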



\begin{remark}
As mentioned in the introduction, the strength of \cref{th:griesmer} and \cref{thm:chromatic_E_not_density} is in their generality. However, the price for this generality is the lack of understanding of the chromatically $E$-intersective set $S$ found in these results. This is in contrast with the special case $E = \P$ where we can take $S$ to be thickly syndetic. In this remark, we discuss another difference between the general case and the case $E = \P$.

For every $\delta \in (0, 1/2)$, \cref{th:griesmer} gives a set $S$ which is chromatically $E$-intersective and a subset $A \subset \N$ such that $\overline{d}(A) > \delta$ and $S \cap (A - A) = \varnothing$ (in particular, $S$ is not intersective). To show that $S$ is not $E$-intersective, we run $S$ through the proof of \cref{thm:recurrence_with_N}. In this proof, we find $A_x \subset E$ such that $\overline{d}_E(A_x) \geq \overline{d}(A) > \delta$ and $S \cap (A_x - A_x) = \varnothing$. Thus, we only know that the upper relative density of $A_x$ is bounded below by $\delta$. On the other hand, when $E = \P$, we can produce a subset of full relative density whose difference set is disjoint from certain chromatically prime intersective set. Indeed, as a consequence of \cref{th:prime-elem}, there are a thick set $S$ and a subset $A \subset \P$ such that $d_{\P}(A) = 1$ and $S \cap (A - A) = \varnothing$. Furthermore, \cref{prop:piecewiseSyndetic}, which we will prove shortly, says that this set $S$ (more generally, every thick set) is chromatically prime intersective. 
\end{remark}

\subsection{For primes}

 \label{sec:for_primes_pwsyndetic}
	
Theorem \ref{th:prime-elem} says that there is a thick set $R$ which is not density $\P$-intersective. But we may wonder whether it is at least chromatically $\P$-intersective. Theorem \ref{prop:piecewiseSyndetic} answers this question positively and we prove it here.
In fact, we prove two different enhancements of Theorem \ref{prop:piecewiseSyndetic}: the first one is a quantitative version of Theorem \ref{prop:piecewiseSyndetic}, whereas the second one is qualitative and 
applies to a vast category of sets, beyond the set of all primes.

\subsubsection{A quantitative enhancement of \cref{prop:piecewiseSyndetic}}
A set $A\subset \N$ is a \textit{$\Delta_r^*$-set} if $A  \cap (S-S)\neq \varnothing$ for every subset $S$ of $\Z$ with
$|S| = r$. Any $\Delta_r^*$-set is syndetic (see footnote \ref{footnote:thick}).

The \textit{syndeticity index} of a set $A\subset \N$ is the smallest cardinality of a set $S \subset \Z$ such that $A + S \supset \N$. If $A$ is a $\Delta_r^*$-set, then $A$ is syndetic with index at most $r-1$. 
To see this, choose $t_1 \in \N$ arbitrary and for choose $t_2, t_3, \ldots \in \N$ recursively such that $t_k > t_{k-1}$ and $t_{k} \not \in \bigcup_{i=1}^{k-1} (t_i + A)$ for any $k \geq 2$. Since $A \cap \{t_j - t_i : 1 \leq i < j \leq k \} = \varnothing$ for all $k$, this process must stop at some $k \leq r-1$. We then have $\N \setminus \bigcup_{i=1}^{k} (t_i + A)$ is finite and let $m$ the greatest integer in this set. It follows that $\N \subset \bigcup_{i=1}^{k} (t_i - m + A)$.  

We extend the notions of $\Delta_r^*$-set and syndeticity index to subsets of $\Z$ in the usual way, i.e. a set $A \subset \Z$ is a $\Delta_r^*$-set if $A \cap \N$ is, and the syndeticity index of $A$ is that of $A \cap \N$.

A set $H = \{ h_1, \ldots, h_k \}$ of integers is said to be \textit{admissible} if for every prime $p$, the elements of $H$ do not occupy all the residues modulo $p$. The Hardy-Littlewood conjecture says that if $H$ is admissible, then there are infinitely many $n \in \N$ such that all elements of $n+H$ are primes.
The Maynard-Tao theorem \cite{Maynard15} says that for any $r >0$, if $H$ is any admissible set with $|H| \gg r^2 e^{4r}$, there exists infinitely many $n \in \N$ such that $|(n+H) \cap \P| \geq r$.

Pintz \cite[Theorem 2]{Pintz} proved that $\P - \P$ is syndetic; however, his proof does not give a bound on the syndeticity index of $\P - \P$. Huang and Wu \cite{Huang-Wu-DifferencePrime} proved that $\P - \P$ is a $\Delta^*_{721}$-set. The next proposition extends Huang and Wu's result to any colorings of $\P$.

	\begin{proposition}
		\label{prop:piecewiseSyndetic2} For each $r \geq 1$ there is a number $m = m(r) \ll r^3 e^{4r}$ such that the following holds. 
		For any partition $\P = \bigcup_{i=1}^{r} P_i$, 
		the set $\bigcup_{i=1}^r (P_i-P_i)$ is $\Delta_m^*$. In particular, $\bigcup_{i=1}^r (P_i-P_i)$ is syndetic with index $\ll r^3 e^{4r}$.
	\end{proposition}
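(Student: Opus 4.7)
The plan is to bootstrap the Maynard--Tao theorem via pigeonhole. To show that $\bigcup_{i=1}^r(P_i - P_i)$ is a $\Delta_m^*$-set, one must prove that every $S \subset \Z$ with $|S| = m$ contains two distinct elements whose difference lies in some $P_i - P_i$. The key intermediate step is to extract an admissible subset $H \subset S$ of size $k^* \asymp r^2 e^{4r}$, large enough that Maynard--Tao produces $r+1$ primes on some translate of $H$.

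To extract $H$, I would proceed greedily: enumerate the primes $p_1 < p_2 < \cdots \leq k^*$, and at step $i$ use pigeonhole to find a residue class modulo $p_i$ containing at most a $1/p_i$-fraction of the current set, then delete that class. After processing all primes $\leq k^*$, the remaining set $H'$ misses some residue class modulo each such prime and is therefore admissible at all primes $\leq k^*$; admissibility at primes $p > k^*$ is automatic once we truncate $H'$ to size $k^*$ (admissibility passes to subsets, and a set of size less than $p$ cannot cover all residues mod $p$). By Mertens' theorem,
\[
|H'| \;\geq\; m\prod_{p \leq k^*}\bigl(1 - 1/p\bigr) \;\gg\; \frac{m}{\log k^*} \;\asymp\; \frac{m}{r},
\]
so choosing $m \gg k^* \log k^* \asymp r^3 e^{4r}$ guarantees $|H'| \geq k^*$, whence an admissible $H$ of size $k^*$ exists inside $S$.

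Having produced such an $H$, the Maynard--Tao theorem yields some $n \in \N$ with $|(n+H) \cap \P| \geq r+1$. Since $\P$ is partitioned into only $r$ classes, the pigeonhole principle forces two distinct $h, h' \in H$ with $n + h,\, n + h' \in P_i$ for some $i$; then $h - h' \in (P_i - P_i) \cap (S - S)$, which settles the $\Delta_m^*$ claim. The syndeticity statement with index $\ll r^3 e^{4r}$ then follows from the general observation (noted just before the proposition) that any $\Delta_m^*$-set is syndetic with index at most $m - 1$. The main obstacle is the admissibility extraction: starting from a generic $S$ of size $m$, the Mertens-type loss of a factor of $\log k^* \asymp r$ is precisely what inflates the final bound from the Maynard--Tao threshold $r^2 e^{4r}$ to $r^3 e^{4r}$.
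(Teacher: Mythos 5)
Your proof is correct and follows essentially the same route as the paper: both extract an admissible set of size $\asymp r^2 e^{4r}$ from $S$ by greedily deleting, for each prime $p \le k^*$, the least-populated residue class mod $p$ (the paper credits this observation to Huang and Wu), then invoke Maynard--Tao to produce $r+1$ primes on a translate and pigeonhole on the colors. The Mertens-type factor-of-$\log k^* \asymp r$ loss you identify is exactly what accounts for the bound $r^3 e^{4r}$ in the paper as well.
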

\cref{prop:piecewiseSyndetic2} directly implies Theorem \ref{prop:piecewiseSyndetic} since
any thick set meets every syndetic set. 
	
\begin{proof}
To begin with, we recall the following observation of Huang and Wu \cite{Huang-Wu-DifferencePrime}: 

\noindent \textbf{Claim:}
      If $A \subset \Z, |A| = m \geq k \prod_{
      \substack{p \in \P,\\ p \leq k}} (1-1/p)^{-1}$ (i.e. $k \ll \frac{m}{ \log m}$), then $A$ contains an admissible set $B$ of cardinality $k$. 

Indeed, by sieving out one residue modulo $p$, for each $p \leq k$, we have a set $B \subset A$ of cardinality $ \geq m \prod_{
\substack{p \in \P,\\ p \leq k}} (1-1/p) \geq k$ which has the property that for each $p \in \P, p \leq k$, $B$ misses at least one residue modulo $p$.  By removing additional elements from $B$, we may assume $|B| = k$. Clearly for each $p \in \P, p > k$, $B$ does not occupy all the residues modulo $p$. Hence, $B$ is admissible.

Let $A$ be any set of cardinality $m \gg r^3 e^{4r}$, then $A$ contains an admissible set $B$ of cardinality $k \gg r^2 e^{4r}$. By the Maynard-Tao theorem, there are infinitely many translates $x+B$ that contain at least $r+1$ primes. Two of these primes must have the same color, so $ (B-B)  \cap \bigcup_{i=1}^r (P_i-P_i) \neq \{ 0 \}$ and therefore $ (A-A)  \cap \bigcup_{i=1}^r (P_i-P_i) \neq \{ 0\}$. 
\end{proof}

\subsubsection{A generalization of \texorpdfstring{\cref{prop:piecewiseSyndetic}}{Theorem F} to other sets}
The next theorem gives a criterion on $E \subset \N$ for every thick set to be chromatically $E$-intersective. Its proof is partially inspired by Pintz \cite{Pintz}. Its statement involves a generalization of the notion of admissible sets.

\begin{proposition}
\label{th:admissible}
    Let $E$ be a set of positive integers.
    Suppose that there exists a family $\mathcal{F}$ of finite sets of positive integers,
called \textup{generalized admissible sets}, satisfying the following two properties.
\begin{enumerate}
    \item \label{th:admissible-1} For every $k\in\N$, there  exists $\ell \in \N$ such that for all 
    $F\in \mathcal{F}$ of cardinality at least $\ell$, the set $|\{n\in\N : |(n+F)\cap E|\geq k\}|$ is infinite.
    \item \label{th:admissible-2} For every $\ell\in\N$, there exists $C\in\N$ such that for every family $I_1,\ldots,I_\ell$
    of intervals of length at least $C$, there exists $F=\{f_1,\ldots,f_\ell\}\in\mathcal{F}$ such that $f_j\in I_j$ for all $j\in [\ell]$.
\end{enumerate}
Then every thick set is chromatically $E$-intersective.
\end{proposition}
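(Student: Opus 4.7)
The plan is to show that for any partition $E = \bigcup_{i=1}^r E_i$ and any thick set $T$, some $(E_i - E_i) \cap T$ is nonempty. The strategy combines both hypotheses on $\mathcal{F}$ with a pigeonhole argument: first construct a generalized admissible set $F \in \mathcal{F}$ whose pairwise differences all lie in $T$, then apply hypothesis \eqref{th:admissible-1} with $k = r+1$ to find a translate $n + F$ meeting $E$ in at least $r+1$ points, and finally invoke pigeonhole on the coloring to obtain two such elements in the same color class whose difference lies in $T$.

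Concretely, applying hypothesis \eqref{th:admissible-1} with $k = r+1$ yields an integer $\ell$, after which hypothesis \eqref{th:admissible-2} yields a constant $C$. The crucial combinatorial step is then to produce intervals $I_1 < I_2 < \cdots < I_\ell$, each of length at least $C$, such that every difference set $I_j - I_i$ (for $i < j$) is contained in $T$. Once this is achieved, hypothesis \eqref{th:admissible-2} produces a generalized admissible set $F = \{f_1, \ldots, f_\ell\} \in \mathcal{F}$ with $f_j \in I_j$, and then all pairwise differences $f_j - f_i$ automatically lie in $T$. Hypothesis \eqref{th:admissible-1} furnishes some $n \in \N$ with $|(n+F) \cap E| \geq r+1$, so two distinct elements $n + f_s, n + f_t$ (with $s < t$) land in the same color class $E_i$, and their difference $f_t - f_s$ lies in $(E_i - E_i) \cap T$, concluding the proof.

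For the construction of the intervals, I would introduce the \emph{$C$-core} $T^{(C)} := \{n \in \N : [n-C, n+C] \subset T\}$, which is again thick because any interval $[M, M+L] \subset T$ with $L \geq 2C$ yields $[M+C, M+L-C] \subset T^{(C)}$. Iteratively pick $0 < t_1 < t_2 < \cdots < t_\ell$ with $t_j - t_i \in T^{(C)}$ for all $i < j$: given $t_1, \ldots, t_{k-1}$, choose an interval $[M, M + t_{k-1}] \subset T^{(C)}$ with $M$ large and set $t_k := M + t_{k-1}$, so that $t_k - t_i \in [M, M + t_{k-1}]$ for each $i < k$. Translating by a large shift $N$ and setting $I_j := [N + t_j, N + t_j + C]$ gives $\ell$ intervals of length $C$ inside $\N$, and $I_j - I_i \subset [t_j - t_i - C,\, t_j - t_i + C] \subset T$ by the defining property of $T^{(C)}$.

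The main obstacle is precisely this synchronization: hypothesis \eqref{th:admissible-2} places only one $f_j$ per prescribed interval, so any admissible choice of $F$ must have all pairwise differences in $T$. This forces us to arrange the intervals $I_j$ in advance so that their entire mutual difference sets sit inside $T$, and the iterative use of $T^{(C)}$ — which is where thickness of $T$ is essential — is the combinatorial engine that makes this possible.
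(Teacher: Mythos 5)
Your proof is correct, and its skeleton is the same as the paper's: apply hypothesis (\ref{th:admissible-1}) with $k=r+1$ to obtain $\ell$, use hypothesis (\ref{th:admissible-2}) to place a generalized admissible set $F=\{f_1,\dots,f_\ell\}$ into $\ell$ prescribed intervals arranged so that the relevant differences land in $T$, and finish by pigeonhole on the colour classes. The one step you execute differently is the arrangement of the intervals. The paper writes $T\supseteq\bigcup_k[M_k,M_k+C_k]$ with $M_k>C_k>4M_{k-1}$ and places the $j$-th prescribed interval in the top half $[M_j+C_j/2,\,M_j+C_j]$ of the $j$-th run; then for $m<j$ one has $f_m\leq 2M_{j-1}<C_j/2$, so the difference $f_j-f_m$ falls back into the $j$-th run $[M_j,M_j+C_j]\subset T$ — only the differences with the larger index on top are controlled, which is all that is needed. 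You instead pass to the $C$-core $T^{(C)}$, extract $t_1<\dots<t_\ell$ with all pairwise differences in $T^{(C)}$ (the standard ``thick sets contain infinite difference sets'' construction, which the paper records in a footnote), and thereby force the entire difference sets $I_j-I_i$ into $T$. Both mechanisms are elementary and yield the same conclusion; yours is arguably a little cleaner to verify since it avoids the bookkeeping with the growth condition $C_j>4M_{j-1}$, while the paper's avoids introducing the auxiliary set $T^{(C)}$.
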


\begin{proof}
    Consider a partition $E=\bigcup_{i=1}^rE_i$. Applying condition (1) with $k=r+1$, letting $\ell\in\N$
    be given by this condition, we infer that for every admissible set $F\in\mathcal{F}$ of cardinality $\ell$,
the set $X$ of $n\in\N$
    such that $\abs{(n+F)\cap E}\geq r+1$ is infinite.
    Then by pigeonholing, for every $n\in X$, there is $i\in [r]$ such that
    $\abs{(n+F)\cap E_i}\geq 2$.
    Then again by pigeonholing, there is $i\in [r]$ such that the set $Y$ of integers
    $n$ for which $\abs{(n+F)\cap E_i}\geq 2$ is infinite.

Let $T \subset \N$ be a thick set. We need to show that $T\cap \bigcup_{i\in r}(E_i-E_i)\neq\varnothing$.
By definition of $T$, there exists a sequence $(N_c)_{c\in\N}$ of integers such that
$T\supseteq \bigcup_c [N_c,N_c+c]$.
Upon extracting a sequence of $c$, we may in fact suppose
that $T\supseteq \bigcup_{k\geq 1} I_k$ where $I_k=[M_k,M_k+C_k]$ and 
$M_k>C_k>4M_{k-1}$; also $C_1>0$ may be chosen
arbitrarily large.
Let $\ell$ be given by condition (\ref{th:admissible-1})
for $k=r+1$.
Consider the intervals $I'_j=[M_j+C_j/2,M_j+C_j]$ for $j\in [\ell]$.
By condition (\ref{th:admissible-2}), there exists a generalized admissible set $F=\{f_1,\ldots,f_\ell\}\in\mathcal{F}$, where 
 $f_j\in I'_j$ for each $j\in [\ell]$, if $C_1$ is large enough. 
Thus there exists $i\in [r]$ such that $\abs{(n+F)\cap E_i}\geq 2$ for some $n\in\N$.
Therefore $f_j-f_m\subset E_i-E_i$ for some $1\leq m<j\leq \ell$.

By assumption, $0 \leq f_m \leq 2M_j \leq 2M_{j-1}$, so $f_j-f_m \in [M_j+C_j/2-2M_{j-1},M_j+C_j]\subset I_j$. We infer $I_j\cap (E_i-E_i)\neq\varnothing$, hence $T\cap (E_i-E_i)\neq\varnothing$.
\end{proof}

Many sets $E$ satisfy the hypothesis of \cref{th:admissible}. Here are some examples:
\begin{enumerate}
    \item The set of primes, with $\mathcal{F}$ being the family of all admissible sets in the usual sense.
    Indeed, property (1) is satisfied by Maynard's theorem \cite{Maynard15}.
    Property (2) is satisfied with $C=\prod_{p\leq \ell}p$ for instance, because whenever $F=\{f_1,\ldots,f_\ell\}$ is such that $(f_i,C)=1$, we have $\abs{F\mod p}<p$ for every $p\leq \ell$, and $\abs{F\mod p}\leq \ell <p$ for every $p>\ell$. Therefore, Theorem \ref{th:admissible} implies Theorem \ref{prop:piecewiseSyndetic}.
    
    More generally, still by Maynard's theorem, the set of all primes of the form $an+b$, for any given coprime integers $a$ and $b$ satisfies the hypothesis of Theorem \ref{th:admissible}.
    Since the set $E$ of all sums of two squares contain the set of all primes of the form $4n+1$, Theorem \ref{th:admissible} also applies to $E$.
    \item 
    Subsets of the primes which retain some of the equidistribution enjoyed by the primes,
    in the form of a Siegel-Walfisz and a Bombieri-Vinogradov theorem,
    also satisfy the
    hypothesis of Theorem \ref{th:admissible}, as shown by Benatar \cite{benatar} and Maynard \cite{Maynard16}, with $\mathcal{F}$ being again the family of all admissible sets in the usual sense.
    These authors mention, respectively, sets of the form
    $\P\cap g^{-1}((0,d))$ where $g\in\R[x]$ satisfies some diophantine conditions and $d$ is a positive real, and Chebotarev sets, i.e. primes with a prescribed value of the Artin symbol respectively to a Galois number field extension. See also \cite{thorner} for the latter example.
    \item Almost all sets of integers, by \cite[Lemma 5]{decompos}. Here $\mathcal{F}$ is the family of all finite subsets of $\N$.
    ``Almost all'' refers to the probability measure on (non-cofinite) sets of integers induced by the Lebesgue measure on $[0,1]$ through the bijection provided by the binary expansion of real numbers.
\end{enumerate}

\subsection{For the sets whose gaps go to infinity}
Here we prove Theorem \ref{th:squares} which demonstrates the necessity of the fact that the gaps between consecutive primes do not go to infinity in \cref{prop:piecewiseSyndetic}.  

\begin{proof}[Proof of \cref{th:squares}]
Let $R=\bigcup_{n\geq 2} I_n$ where $I_n=[f(n),f(n)+n]$, where $f(n)$ is a sufficiently quickly increasing sequence of elements of $E$. In particular, the intervals $I_n$ may be assumed to be pairwise disjoint and so $R$ is thick.
Consider $a \in E$.
We show that there exists at most one integer $b<a$	in $E$ such that $a-b\in R$.
So assume there exists such an integer $b\in E$, and let $n\in\N$ such that $a-b\in I_n$; $n$ which is unique given $a-b$. 
By hypothesis, there exists a function $g:E\rightarrow\N$
increasing to infinity such that $a-c>g(a)$, for any $c\in E$ with $c<a$, in particular for $c=b$.
Thus $a-b\in [g(a),a-1]$. If $f$ grows sufficiently quickly in terms of $g$, there is at most one $n\in \N$ such that
$[g(a),a-1]\cap I_n\neq\varnothing$. 

So there is a unique $n$ such that there exists $c<a,c\in E$	such that $a-c\in I_n$. In particular $a>f(n)$.
Since $f(n)\in E$, we infer $a\geq f(n)+g(f(n))$.
Since $a-b\leq f(n)+n$, this implies in turn that $b\geq f(g(n))-n$.

But then for any integer $c\in E,c\neq b$ we have $\abs{c-b}\geq g(b)\geq g(f(g(n))-n)>n$, if $f$ grows sufficiently quickly in terms of $g$. Thus $a-c\not\in I_n$. This means that
$a-c\not\in R$. Whence the uniqueness of $b\in E$ such that $a-b\in R$.

This uniqueness implies that there exists a two-coloring of $c:\N\rightarrow\{1,2\}$
 such that 
 \begin{equation}
 \label{eq:coloring}
 \forall \{a,b\}\subset E,\,a-b\in R\Rightarrow c(a)\neq c(b).
 \end{equation}
 This is a standard deduction in graph theory (the chromatic number is at most one plus the degeneracy) but we provide it briefly here. We construct $c(e_n)$ inductively, where $e_1<e_2<\cdots$ is the increasing sequence of the elements of $E$.
 Suppose $c(e_1),\ldots,c(e_n)$ have been constructed and satisfy
$$\forall k,\ell  \in [n],\, e_k-e_\ell\in R\Rightarrow c(e_k)\neq c(e_\ell).$$
 Then we color $e_{n+1}$.
 Since there is at most one $k<n+1$ such that $e_{n+1}-e_k\in R$, it suffices
 to take $c(e_{n+1})\neq c(e_k)$ if such a $k$ exists, and $c(e_{n+1})\in\{1,2\}$ arbitrary otherwise.

 Now the coloring $c$ induces a bipartion $E=E_1\cup E_2$ defined by $E_i=\{e \in E : c(e)=i\}$
 for $i\in [2]$. 
 In view of \eqref{eq:coloring}, we have $R\cap (E_i-E_i)=\varnothing$ for any $i\in[2]$, as desired.
\end{proof} 
 
\section{Open questions}

\label{sec:open_questions}

In this section, we present some open questions that naturally arise from our study. The first four questions involve the set of primes and the last two are about arbitrary sets of zero Banach density.

In \cref{prop:piecewiseSyndetic}, we prove that for any finite partition $\P = \bigcup_{i=1}^k E_i$, the union $\bigcup_{i-1}^k (E_i - E_i)$ is syndetic. It follows that $E_i - E_i$ is piecewise syndetic for some $i \in [k]$. It is not clear whether we can upgrade from piecewise syndeticity to syndeticity:
\begin{question}\label{ques:partition_p_syndetic}
    For any partition $\P = \bigcup_{i=1}^{k} E_i$, does there exist $i \in [k]$ such that $E_i - E_i$ is syndetic?
\end{question}
It is worth noting that the density analogue of \cref{ques:partition_p_syndetic} is false as shown in Theorems \ref{th:prime-elem} and \ref{thm:pws}.

A natural approach to answer \cref{ques:partition_p_syndetic} would be using \cref{th:prime-elem}, in which we found a partition $\P=A\cup B$ where $A-A$ 
is not syndetic and $\overline{d}_{\P}(B) = 0$. Since $B$ is a sparse subset of $\P$, it would be plausible to expect that $B - B$ is not syndetic, thus giving a negative answer to \cref{ques:partition_p_syndetic}. However, assuming the Hardy-Littlewood Conjecture (see \cref{sec:for_primes_pwsyndetic}), we can prove that $B - B$ is syndetic.

Indeed, \cref{th:prime-elem} is a special case of \cref{lm:generalisation} when $E=\P,T=\N$ and $f(x) = x^2$. First, observe that $B=\P\setminus A\supset \P\setminus C =:\P'$ where $C$ is the set defined in the proof of \cref{lm:generalisation}.
In turn, $\P'\supset \{p\in\P : p\geq \sqrt{g(2)}, (p+[g(2)-2,g(2)])\cap\P\neq\varnothing\}$. Certainly, $[g(2)-2,g(2)]$ contains an even number $m$. 
		So $B\supset \{p\in\P: p\geq g(2), p+m\in\P\}=:\P_m $.
Letting $k \in \N$ and $H = \{0, m, 6k, m+6k\}$. Since $m$ is even, $H$ is admissible. By the Hardy-Littlewood Conjecture, there are infinitely many $n$ such that $n + H \subset \P$. Hence, there are infinitely many $n \in \P_m$ such that $n + 6k \in \P_m$. We conclude that $6 \cdot \Z \subset \P_m - \P_m$ and therefore $\P_m - \P_m$ is syndetic.


The next question is about the chromatic analogue of Theorems \ref{th:prime-elem} and \ref{thm:pws}. More precisely, it follows from these theorems that that there exists an intersective set which is not prime intersective. Equivalently, it is false that every intersective set is prime intersective. Therefore, we ask:


\begin{question}\label{ques:chromatic-intersective-and-prime}
    Must every chromatically intersective set be chromatically prime intersective?
\end{question}
For the density counterpart of \cref{ques:chromatic-intersective-and-prime}, we produce thick sets (and so an intersective set) which are not prime intersective. However, the same idea will not work for \cref{ques:chromatic-intersective-and-prime}: it has been shown in \cref{prop:piecewiseSyndetic} that every thick set is chromatically prime intersective.
We also remark that the converse of \cref{ques:chromatic-intersective-and-prime} is true; it is easy to see that every chromatically prime intersective set is chromatically intersective. 

In the next question, we upgrade chromatic intersectivity to density intersectivity:
\begin{question}\label{ques:intersective-prime}
    Must every intersective set be chromatically prime intersective?
\end{question}

Questions \ref{ques:chromatic-intersective-and-prime} and \ref{ques:intersective-prime} are related to the next conjecture.
It is widely believed that $\P - \P \supset 2 \cdot \Z$. If this conjecture is true, then for every intersective set $R$, $R \cap (\P - \P) \neq \varnothing$. The following conjecture seeks to prove the second clause unconditionally.
\begin{conjecture}\label{conj:intersective_intersect_p-p}
    For every intersective set $R$, we have $R \cap (\P - \P) \neq \varnothing$.
\end{conjecture}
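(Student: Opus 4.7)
The plan is to proceed by trying to reduce the conjecture to a structural statement about the difference set $\P - \P$. The most direct route would be to exhibit a set $A \subset \N$ with $\overline{d}(A) > 0$ satisfying $A - A \subset \P - \P$: then applying the intersectivity of $R$ to $A$ would immediately yield $R \cap (A - A) \neq \varnothing$, and hence $R \cap (\P - \P) \neq \varnothing$. A natural search is for $A$ of the form of a Bohr-type set around a translate of the primes, or the image of a suitable sieve construction. Failing that, since every intersective set contains a nonzero multiple of every $d \in \N$ (because $d\N$ has positive density and $d\N - d\N = d\Z$), one could instead try to identify, for each intersective $R$, some specific $d$ such that $d\Z \cap (\P - \P)$ is already forced to meet $R$; this would require substantial new input on the divisibility structure of $\P - \P$.

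An alternative strategy is dynamical. By Furstenberg's correspondence, as exploited in the proof of \cref{thm:recurrence_with_N}, $R$ is intersective if and only if it is a set of measurable recurrence. One would therefore try to construct a measure preserving system $(X, \mathcal{B}, \mu, T)$ together with a set $B$ of positive measure such that $\{n \in \N : \mu(B \cap T^{-n} B) > 0\} \subset \P - \P$. A natural candidate would be a system built from a dense model for the primes via the transference machinery used in \cref{sec:multiple_recurrence_shifted_checn} (following Green--Tao and Bienvenu--Shao--Ter\"av\"ainen), so that the return times of a suitable set are forced to track the additive structure of $\P$. One could supplement this with Pintz's syndeticity of $\P - \P$ and the Maynard--Tao bounded gaps result to ensure a rich enough spectrum of candidate return times.

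The main obstacle I foresee is that intersective sets can be made extremely sparse and structured in antagonistic ways: for essentially any prescribed set of density zero one can engineer an intersective $R$ avoiding it, so purely soft density or Bohr arguments cannot suffice. Since every intersective $R$ does contain a nonzero element of $2\Z$, the conjecture would follow immediately from the widely believed inclusion $\P - \P \supset 2\Z$, so any unconditional proof must either establish a weakening of that inclusion still strong enough to absorb every intersective set, or bypass the difference-set reduction entirely via a genuinely new dynamical link between measurable recurrence and the additive combinatorics of the primes. Producing either the positive-density $A$ of the first paragraph or the measure preserving system of the second is, I expect, precisely the hard step, and it is the gap that the conjecture is designed to highlight.
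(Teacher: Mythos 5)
This statement is labelled a \emph{conjecture} in the paper, and the paper offers no proof of it: the surrounding discussion only records that (i) it would follow from the widely believed inclusion $\P - \P \supset 2\cdot\Z$, since every intersective set contains a nonzero even number; (ii) it is equivalent to the existence of a set $E$ with $d^*(E) > 0$ and $E - E \subset \P - \P$; and (iii) it holds when $R$ is thick, by Pintz's theorem that $\P - \P$ is syndetic. Your proposal correctly declines to claim a proof and, in fact, reproduces essentially the same reductions the authors mention: your first paragraph is exactly the equivalent formulation (ii) (the paper states it with upper Banach density, which by Furstenberg's correspondence is the natural and equivalent choice), and your closing observation about $2\cdot\Z$ is the paper's point (i). Your side remarks are also sound --- in particular, the complement of any density-zero set is automatically intersective (since $A-A$ is syndetic whenever $\overline{d}(A)>0$), which is why only the syndetic structure of $\P-\P$, not its size, is relevant here.

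So there is no gap to repair relative to the paper, because there is nothing to compare against: neither you nor the authors prove the statement, and your assessment of where the difficulty lies (producing a positive Banach density set whose difference set sits inside $\P - \P$, or an equivalent dynamical construction) matches the authors' framing of why this is open. The one point worth flagging is that your second, ``dynamical'' strategy as described would not obviously be easier than the first: building a measure preserving system whose return-time sets for positive-measure sets all land in $\P - \P$ is, via the correspondence principle, the same problem as producing the set $E$ of positive upper Banach density with $E - E \subset \P - \P$, so it is a reformulation rather than a genuinely independent line of attack.
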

This conjecture is equivalent to the statement that $\P - \P$ contains a set of the form $E - E$ where $d^*(E) > 0$. 
Since Pintz \cite{Pintz} shows that $\P - \P$ is syndetic, the conjecture will be true if $R$ is a thick set. On the other hand, as proved in \cref{th:prime-elem}, the conjecture will be false if we replace the full set of primes $\P$ by certain subset $A \subset \P$ of relative density $1$. Among the previous three questions/conjectures, \cref{conj:intersective_intersect_p-p} is the weakest the following sense: a positive answer to \cref{ques:chromatic-intersective-and-prime} implies a positive answer to \cref{ques:intersective-prime}, which in turn implies \cref{conj:intersective_intersect_p-p}.

In \cref{prop:ss'-z}, we show that one cannot replace $\P$ in \cref{th:prime-elem} with an arbitrary set $E$ having $d^*(E) = 0$. More precisely, there exists $E \subset \N$ such that $d^*(E) = 0$ and if $d_E(A) = 1$, then $A - A = \Z$ (in particular, syndetic). Currently, we do not know if the same is true for  \cref{thm:pws}. Likewise, the following slightly stronger statement is open:
\begin{question}\label{conj:RelativeDensityOneMinusEps}
Does there exist $E \subset \N$ such that $d^*(E) = 0$ and if $\overline{d}_E(A) > 0$, then $A - A$ is syndetic?
\end{question}
Nevertheless, Propositions \ref{lm:generalisationEps}
and \ref{th:largecm} tend to suggest that the answer is no, perhaps even with the condition $\overline{d}_E(A) > 1-\epsilon$ instead of $\overline{d}_E(A) > 0$.

Our last question aims to generalize the fact that there is an intersective set which is not prime intersective to arbitrary set of zero Banach density.

\begin{question}\label{ques:intersective-e-intersective}
    Suppose $d^*(E) = 0$. Does there exist an intersective set which is not $E$-intersective? 
\end{question}
Note that the converse of \cref{ques:intersective-e-intersective} is false according to \cref{thm:e-intersective-implies-n-intersective}.

\appendix

\section{Subsets of \texorpdfstring{$\Z$}{Z} whose closures in \texorpdfstring{$b\Z$}{bZ} have measure zero} 
 
\label{sec:closure_measure_zero}

In this appendix, we exhibit some subsets of $\Z$ whose closures in $b\Z$ have measure $0$. Recall that $b\Z$ is the Bohr compactification of $\Z$ with normalized measure $m_{b\Z}$. For a set $A \subset \Z$, let $\overline{A}$ denote the closure of $A$ in $b\Z$.
We follow the same approach as Dressler and Pigno \cite{Dressler-Pigno-Haarmeasure}, which relies on the following observations.

\begin{lemma} \label{lem:mbz}\
\begin{enumerate}
  \item  For all $c \in \N, d \in \Z$, we have $m_{b\Z}(\overline{c\cdot\Z + d}) = \frac{1}{c}$.
  \item  For any sets $A, E \subset \Z$ with $E$ finite, we have $m_{b\Z}(\overline{A \cup E}) = m_{b\Z}(\overline{A 
  })$.
 \item  Suppose $c_1, \ldots, c_k \in \N$ are pairwise coprime and $A \subset \Z$ is such that, except a finite number of exceptions, every $a \in A$ misses $c'_i$ residues $\pmod{c_i}$ for each $1 \leq i \leq k$. Then $m_{b\Z}(\overline {A}) \leq \prod_{i=1}^k \left( 1-\frac{c'_i}{c_i} \right)$.  
\end{enumerate}
\end{lemma}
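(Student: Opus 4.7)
The plan is to prove the three parts in order, using the universal property of $b\Z$ throughout, with parts (2) and (3) resting on part (1).

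For part (1), I will extend the reduction map $\phi: \Z \to \Z/c\Z$ to a continuous homomorphism $\tilde\phi: b\Z \to \Z/c\Z$ via the universal property. Density of $\tau(\Z)$ forces $\tilde\phi$ to be surjective, so $\ker\tilde\phi$ is an open and closed subgroup of $b\Z$ of index $c$; by uniqueness of the normalized Haar measure on the finite quotient, each coset has measure $1/c$. It then remains to verify $\overline{\tau(c\Z + d)} = \tilde\phi^{-1}(\{d\})$. The inclusion $\subseteq$ is immediate from continuity of $\tilde\phi$; for the reverse, I will invoke the general topological fact that if $G$ is a dense subgroup of a topological group $H$ and $K$ is an open subgroup of $H$, then $G \cap K$ is dense in $K$. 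Applied with $H = b\Z$, $G = \tau(\Z)$, and $K = \ker\tilde\phi$, this gives $\overline{\tau(c\Z)} = \ker\tilde\phi$, and translating by $\tau(d)$ completes the identification.

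Part (2) will follow quickly from part (1). Since $\tau(0) \in \overline{c\Z}$ for every $c \geq 1$, part (1) yields $m_{b\Z}(\{\tau(0)\}) \leq 1/c$; letting $c \to \infty$ shows $m_{b\Z}(\{\tau(0)\}) = 0$, and translation invariance of the Haar measure propagates this to every point of $b\Z$. Hence $\overline E = \tau(E)$ is a finite set of measure zero, and the conclusion follows from $\overline{A \cup E} = \overline A \cup \overline E$ together with subadditivity and monotonicity.

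For part (3), I plan to reduce to parts (1) and (2) via the Chinese Remainder Theorem. Let $S_i \subset \Z/c_i\Z$ denote the set of $c'_i$ residues avoided by $A$ outside a finite exceptional set $F$, and set $C := c_1 \cdots c_k$. By CRT, the number of residues modulo $C$ that avoid $S_i \pmod{c_i}$ for each $i$ equals $N := \prod_{i=1}^k (c_i - c'_i)$, so there exist integers $r_1, \ldots, r_N$ with $A \setminus F \subset \bigcup_{j=1}^N (r_j + C\Z)$. Discarding $F$ via part (2), applying subadditivity of $m_{b\Z}$, and computing each term by part (1) yields
\[
m_{b\Z}(\overline A) \leq \sum_{j=1}^N m_{b\Z}\bigl(\overline{r_j + C\Z}\bigr) = \frac{N}{C} = \prod_{i=1}^k \left(1 - \frac{c'_i}{c_i}\right).
\]
The proof is largely formal; the main subtlety is the density argument in part (1), which hinges on verifying that $\tilde\phi$ is surjective and that $\ker\tilde\phi$ is open in $b\Z$.
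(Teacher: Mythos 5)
Your proof is correct and follows the same route as the paper's: part (1) via the identification of $\overline{c\Z+d}$ as a coset of an index-$c$ open subgroup (which you derive carefully from the universal property), part (2) from singletons having Haar measure zero, and part (3) by CRT reduction to parts (1) and (2). The paper's proof is simply a terser version of the same argument, asserting without elaboration the facts you verify in detail.
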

\begin{proof}
The first statement simply follows from the fact that $\overline{c\cdot \Z}$ is a closed subgroup of index $c$ in $b\Z$ and $m_{b\Z}$ is translation invariant. The second statement holds because $\overline{A \cup E} = \overline{A} \cup \overline{E} = \overline{A} \cup E$, and $m_{b\Z}(E) =0$. The third statement follows from the first two and the Chinese remainder theorem.
\end{proof}
	
Let $p_i$ be the $i$-th prime. Then except for $p_i$, all primes misses one residue $\pmod{p_i}$. Hence, Lemma \ref{lem:mbz} implies that for any $k$, $m_{b\Z}(\overline{\P}) \leq \prod_{i=1}^k \left( 1-\frac{1}{p_i} \right)$. Letting $k$ go to infinity, we conclude that $m_{b\Z}(\overline{\P}) = 0$.	All of our examples are obtained in this way, by taking $\{ c_i \}$ to be an appropriate sequence of moduli. More precisely, they will be dense subsets of $\P$ and $\{ p^2 : p \in \P\}$. The fact that these subsets are dense follows from various instances of Chebotarev's density theorem. We refer the reader to \cite{stevenhagen-lenstra} for the statement and an account of Chebotarev's density theorem.

Our first example is the set of values of a polynomial $P$. It was proved by Kunen and Rudin 
\cite[Theorem 5.6]{kunen-rudin} in the case $\deg P =2$ or 3.  It was also pointed out to them by David Boyd (see  footnote 1 in \cite{kunen-rudin}) that the results holds for all $\deg P \geq 2$, though no proof was given. We will now supply a proof.

\begin{proposition}
		Let $P \in \Z[x]$
		have degree $\geq 2$ and $A= \{ P(n) : n \in \Z\}$. Then  $m_{b\Z} \left( \overline{A} \right) = 0$.
	\end{proposition}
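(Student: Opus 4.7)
The plan is to apply Lemma~\ref{lem:mbz}(3) with the moduli $c_i$ chosen to be a carefully selected infinite sequence of distinct primes $p_1 < p_2 < \cdots$. Since every element of $A = \{P(n) : n \in \Z\}$ reduces modulo $p_i$ to an element of the image $P(\F_{p_i})$, the set $A$ misses exactly $p_i - |P(\F_{p_i})|$ residues mod $p_i$, so the factor $1 - c'_i/c_i$ in Lemma~\ref{lem:mbz}(3) equals $|P(\F_{p_i})|/p_i$. Therefore, if I can produce an infinite sequence of primes $p_i$ and a constant $\rho < 1$ (independent of $i$) with $|P(\F_{p_i})|/p_i \leq \rho$, then Lemma~\ref{lem:mbz}(3), applied for each finite truncation and letting the truncation length go to infinity, yields
$$m_{b\Z}(\overline{A}) \leq \prod_{i=1}^{\infty} |P(\F_{p_i})|/p_i \leq \prod_{i=1}^{\infty} \rho = 0.$$

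To produce such a sequence I would establish the following quantitative non-surjectivity: there exists $\delta > 0$ such that $|P(\F_p)|/p \leq 1 - \delta$ for all sufficiently large primes $p$. Consider the polynomial $P(x) - t \in \Q(t)[x]$. Since it is linear in $t$, Gauss's lemma shows it is irreducible over $\Q(t)$. Let $L$ be its splitting field over $\Q(t)$, and set $G := \operatorname{Gal}(L/\Q(t))$. The group $G$ acts faithfully and transitively on the $d = \deg P$ roots of $P(x) - t$, embedding as a transitive subgroup of $S_d$. The classical theorem of Jordan (1872) asserts that every transitive subgroup of $S_d$ with $d \geq 2$ contains a fixed-point-free element; consequently the proportion
$$\pi(G) := \frac{|\{g \in G : g \text{ has no fixed point on the roots}\}|}{|G|}$$
is strictly positive.

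The function field Chebotarev density theorem, applied to the reduction of $L/\Q(t)$ modulo $p$ (excluding a finite exceptional set of primes that ramify or where the reduction degenerates), shows that the number of $t_0 \in \F_p$ for which $P(x) - t_0$ has no root in $\F_p$ equals $\pi(G) \cdot p + O_P(\sqrt{p})$, the error term coming from Weil-type bounds. Hence $|P(\F_p)|/p \leq 1 - \pi(G) + O_P(p^{-1/2})$, and setting $\rho := 1 - \pi(G)/2 < 1$ gives a valid choice for all sufficiently large primes $p$. Taking $(p_i)$ to be an enumeration of any infinite set of such primes concludes the argument.

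The main obstacle is that a clean uniform treatment for arbitrary $P$ of degree $\geq 2$ genuinely seems to require two classical inputs — Jordan's theorem on transitive permutation groups and the function field Chebotarev density theorem. Both are standard, but neither is entirely trivial; for specific families one can bypass them (e.g., for $P(x) = x^d$ one has $|P(\F_p)| = (p-1)/\gcd(d, p-1) + 1$, which is about $p/d$ whenever $p \equiv 1 \pmod d$, an infinite set by Dirichlet), but without such structure the Chebotarev/Jordan route appears to be the most economical.
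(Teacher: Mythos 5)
Your proof is correct but follows a genuinely different route from the paper. The paper first applies the Hilbert irreducibility theorem to replace $P$ by a translate $P+r$ that is irreducible over $\Q$ (legitimate since $m_{b\Z}$ is translation-invariant), then works with the Galois group $G = \mathrm{Gal}(K/\Q)$ of that single polynomial over the number field $\Q$. Via the same fixed-point-free-element argument as yours (the paper proves Jordan's observation by Burnside's lemma), it invokes Frobenius's density theorem over $\Q$ to conclude that the set $Q$ of primes $p$ with $p\nmid P(n)$ for all $n$ has positive density in $\P$; since $\sum_{p\in Q}1/p$ then diverges, the product $\prod_{p\in Q}(1-1/p)$ is zero, even though each factor is only $1 - 1/p$. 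You instead pass to the generic fiber $P(x)-t$ over $\Q(t)$, whose irreducibility is automatic (no Hilbert irreducibility needed), and use the function-field Chebotarev theorem with Weil error bounds to get the much stronger estimate $|P(\F_p)|/p \le \rho < 1$ for all large $p$; the product then vanishes trivially. So you trade Hilbert irreducibility for the heavier function-field machinery with explicit $O(\sqrt p)$ error terms. Both are correct and appeal to the same group-theoretic core; the paper's version is arguably lighter on analytic input (only a density statement with no error term), while yours yields the sharper fact that $P$ is quantitatively non-surjective mod $p$ uniformly in large $p$. One caveat to be aware of in your write-up: the function-field Chebotarev count for specializations mod $p$ involves the interplay between the arithmetic and geometric monodromy groups of $P(x)-t$, so strictly speaking one should either note that the two coincide generically for the purposes needed here, or work with the geometric group after discarding finitely many bad $p$; this is a standard but non-trivial point worth flagging.
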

	
	\begin{proof}
Suppose $\deg P = n >1$. The polynomial $f(x,y) = P(x) + y 
		$ is clearly irreducible in $\Z[x, y]$. By the Hilbert irreducibility theorem (see e.g. \cite[Theorem 4]{vgr}), there exists infinitely many $r \in \Z$ such that $f(x, r) = P(x) + r$ is irreducible in $\Z[x]$.
Since $m_{b\Z}$ is translation invariant, we may assume that $P$ is irreducible in $\Z[x]$.

Let $K$ be a splitting field of $P$ and $X$ be the set of all roots of $P$ in $K$. Let $G = \textup{Gal}(K/\Q)$ be the Galois group of $P$. Then $G$ is a subgroup of $S_n$ that acts transitively on $X$. 
		
\noindent		\textbf{Claim:}
			$G$ has an element without fixed points.

By Burnside's lemma, the number of orbits in $X$ is $\frac{1}{|G|} \sum_{g \in G} |X^g|$,
			where $X^g$ is the set all elements of $X$ fixed by $g$. Since the action is transitive, we have $\sum_{g \in G} |X^g| = |G|$. Since $|X^e| = n > 1$, there must be some $g \in G$ such that $|X^g| = 0$, and the claim is proved.

We now recall Frobenius's theorem (see \cite[p. 11]{stevenhagen-lenstra}), which is a precursor of Chebotarev's density theorem. Let $p$ be a prime not dividing the discriminant of $P$, then in $\F_p[x]$, $P$ factors as a product of distinct irreducible polynomials of degrees $n_1, \ldots, n_k$, where $n_1 + \cdots + n_k = n$. Frobenius's theorem says that the density of primes $p$ with given decomposition pattern $(n_1, \ldots, n_k)$ exists, and is equal to $m/|G|$, where $m$ is the number of permutations $\sigma \in G$ with cycle pattern $(n_1, \ldots, n_k)$.

Let $Q$ be the set of all primes $p$ such that $p \nmid P(n)$ for all $n \in \Z$. Applying Frobenius's theorem with $n_1=1$, we see that $d_{\P}(Q) = \frac{m}{|G|}$, where $m$ is the number of permutations $\sigma \in G$ without fixed point. Hence $d_{\P}(Q) > 0$. 

Let $\{ c_i \}_{i=1}^\infty$ be all the elements of $Q$. Applying Lemma \ref{lem:mbz} with $c_i' = 1$, we have that 
$m_{b\Z}( \overline{A}) \leq \prod_{i=1}^k \left( 1-\frac{1}{c_i} \right)$, for all $k$. Letting $k$ go to infinity, we have  $m_{b\Z} \left( \overline{A} \right) = 0$, as desired.
\end{proof}
	
Our second example generalizes Dressler and Pigno's example of sums of two squares.

\begin{proposition} \label{prop:binary}
    Let $a, b, c \in \Z$ such that $D = b^2-4ac$ is not a perfect square, and let $A = \{ax^2 + bxy + cy^2: x, y \in \Z\}$. Then $m_{b\Z} \left( \overline{A} \right) =0$.
\end{proposition}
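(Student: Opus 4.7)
The proof plan is to follow Dressler and Pigno's general scheme, exactly as used in the proof of the previous proposition. Specifically, I will exhibit an infinite sequence of pairwise coprime moduli $c_i$ with $c_i' \in \N$ such that every $n \in A$ misses $c_i'$ residues modulo $c_i$. \cref{lem:mbz}(3) will then give $m_{b\Z}(\overline{A}) \leq \prod_{i=1}^{k}(1 - c_i'/c_i)$ for every $k$, and I will arrange that this infinite product tends to zero.

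The local obstruction at a prime $p$ comes from completing the square: the identity
\[
4a(ax^2 + bxy + cy^2) = (2ax+by)^2 - D y^2
\]
realizes $Q(x,y) = ax^2 + bxy + cy^2$, up to the factor $4a$, as a norm form for the quadratic order $\Z[\sqrt{D}]$. Fix an odd prime $p$ with $p \nmid 4aD$ and $(D/p) = -1$. I claim that $Q$ does not represent any integer $n$ with $v_p(n) = 1$. Indeed, if $p \mid Q(x,y)$ then $(2ax+by)^2 \equiv D y^2 \pmod{p}$; if $p \nmid y$ this forces $D$ to be a quadratic residue mod $p$, contradicting $(D/p)=-1$, so $p \mid y$. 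Then $p \mid 2ax+by$, and since $p \nmid 2a$ we conclude $p \mid x$, whence $p^2 \mid Q(x,y)$. Consequently $A$ entirely misses the $p-1$ residue classes $p, 2p, \ldots, (p-1)p$ modulo $p^2$.

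Since $D$ is not a perfect square, the quadratic character $(D/\cdot)$ is a nontrivial real Dirichlet character, so by quadratic reciprocity combined with Dirichlet's theorem on primes in arithmetic progressions (equivalently, the Chebotarev density theorem applied to $\Q(\sqrt{D})/\Q$), the set $\mathcal{P}$ of odd primes $p \nmid 4aD$ with $(D/p) = -1$ has density $1/2$ inside the primes; in particular $\sum_{p \in \mathcal{P}} 1/p = \infty$. Enumerating $\mathcal{P}=\{p_1<p_2<\cdots\}$ and setting $c_i = p_i^2$ and $c_i' = p_i - 1$, the moduli are pairwise coprime, and by the claim above every element of $A$ misses $c_i'$ residues modulo $c_i$ (with no exceptions needed). \cref{lem:mbz}(3) then yields
\[
m_{b\Z}(\overline{A}) \leq \prod_{i=1}^{k} \left(1 - \frac{p_i - 1}{p_i^2}\right)
\]
for every $k$, and since $(p_i-1)/p_i^2 \geq 1/(2p_i)$ while $\sum_i 1/p_i$ diverges, the infinite product tends to $0$, giving $m_{b\Z}(\overline{A})=0$.

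The main technical step is the $p$-adic obstruction, which upgrades a failure modulo $p$ into a failure at residues of $p$-adic valuation exactly $1$ modulo $p^2$; working modulo $p$ alone is insufficient, because the norm form from $\F_{p^2}$ to $\F_p$ is already surjective and hence rules out no residue class mod $p$. The hypothesis that $D$ is not a perfect square is used only to guarantee that $(D/\cdot)$ is a nontrivial character so that $\mathcal{P}$ has positive density; this remains true even when $D$ is not squarefree, because $(D/p)=(D_0/p)$ where $D_0$ is the squarefree kernel of $D$, which is not $1$.
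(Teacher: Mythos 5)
Your proof is correct and follows essentially the same strategy as the paper's: use the identity $4a(ax^2+bxy+cy^2)=(2ax+by)^2-Dy^2$ to show that for primes $p$ with $(D/p)=-1$ every element of $A$ divisible by $p$ is divisible by $p^2$, hence $A$ misses the residues $p,2p,\ldots,(p-1)p$ modulo $p^2$, and then apply \cref{lem:mbz} together with the positive density of such primes. The only (harmless, arguably cleaner) differences are that you argue directly on $Q(x,y)$ instead of first reducing to $\{z^2-Dt^2\}$ via \cite[Lemma 2.8]{Le-Le-compact}, and you get the density of non-residue primes from Chebotarev/Dirichlet in one stroke rather than the paper's explicit two-case quadratic-reciprocity computation.
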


 \begin{proof}
According to \cite[Lemma 2.8]{Le-Le-compact}, we have $m_{b\Z} \left( \overline{A} \right) \leq 4|a| m_{b\Z} \left(4a \cdot \overline{A} \right) = 4|a| m_{b\Z} \left(\overline{4a \cdot A} \right)$. Since 
\[
4a(ax^2 + bxy +cy^2) = (2ax + by)^2 - Dy^2,
\]
it suffices to show that $m_{b\Z} \left( \overline{A'} \right) = 0$, where $A' = \{ z^2 - Dt^2 : z, t \in \Z\}$.
Let $p$ be a prime number such that $D$ is not a quadratic residue modulo $p$. If $p | (z^2 - Dt^2)$, 
then we must have $p|z$ and $p|t$, so $p^2 | (z^2 - Dt^2)$. Therefore, elements of $A'$ cannot be congruent to $p, 2p, \ldots, (p-1)p \pmod{p^2}$. 

Let $Q = \{ q_i \}_{i=1}^\infty$ be all the primes for which $D$ is not a quadratic residue. We claim that $\underline{d}_{\P}(Q) >0$. Indeed, since $D$ is not a perfect square, there exists a prime $p$ such that $D = p^k m$ where $k$ is an odd positive integer and $(p, m) = 1$. Consider two cases:

\noindent \textbf{Case 1}: $p = 2$ (and $m$ is odd). Let $Q' = \{q \in \P: q \equiv 5 \pmod{ 8}, q \equiv 1 \pmod{m}\}$ and let $q \in Q'$. Then $2$ is not a quadratic residue mod $q$ while $q$ is a quadratic residue of every prime divisor of $m$. By the law of quadratic reciprocity and the fact that $q \equiv 1 \pmod 4$, every prime divisor of $m$ is a quadratic residue $\pmod{q}$. Hence $m$ is a quadratic residue $\pmod{q}$ (this remains true if $m <0$, since $-1$ is a quadratic residue $\pmod{q}$). It follows that $D$ is not a quadratic residue mod $q$.

\noindent \textbf{Case 2}: $p > 2$. Let $q'$ be a quadratic nonresidue mod $p$. Let $Q' = \{q \in \P: q \equiv q' \pmod{p}, q \equiv 1 \pmod{8m}\}$ and let $q \in Q'$. Since $q \equiv 1 \pmod{8}$, $2$ is a quadratic residue $\pmod{q}$. By the law of quadratic reciprocity, every odd prime divisor of $m$ is a quadratic residue $\pmod{q}$ while $p$ is not a quadratic residue $\pmod{q}$. Also, $-1$ is a quadratic residue $\pmod{q}$. It again implies that $D$ is not a quadratic residue mod $q$.

All two sets $Q'$ defined above satisfy $Q' \subset Q$ and by Dirichlet's theorem, $d_{\P}(Q') > 0$. Therefore, we always have $\underline{d}_{\P}(Q) > 0$; our claim is proved.


Applying Lemma \ref{lem:mbz} with $c_i = q_i^2$ and $c_i' = q_i-1$, we have that 
$m_{b\Z}( \overline{A'}) \leq \prod_{i=1}^k \left( 1-\frac{q_i-1}{q_i^2} \right)$, for all $k$. Letting $k$ go to infinity, we have  $m_{b\Z} \left( \overline{A'} \right) = 0$, as desired.  
\end{proof}

Our third example generalizes the second one.

\begin{proposition}\label{prop:K-algebraic}
Let $K$ be an algebraic number field of degree $n >1$. Let $\OK$ be the ring of integers of $K$ and $\{ \omega_1, \ldots, \omega_n\}$ be an integral basis of $\OK$. Let $F(x_1, \ldots, x_n) = N_{F/\Q}(x_1 \omega_1 + \cdots + x_n \omega_n)$ and $A = \{  F(x_1, \ldots, x_n) : x_1, \ldots, x_n \in \Z\}$. Then $m_{b\Z} \left( \overline{A} \right) =0$.
\end{proposition}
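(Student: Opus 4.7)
The strategy mirrors Proposition \ref{prop:binary}: we exhibit infinitely many primes $q$ such that elements of $A$ avoid many residue classes modulo $q^2$, and then invoke Lemma \ref{lem:mbz}(3). Concretely, I will seek primes $q$ for which $q \mid F(x_1,\ldots,x_n)$ automatically forces $q^2 \mid F(x_1,\ldots,x_n)$; for any such $q$, the set $A$ must miss the $q-1$ residue classes $q, 2q, \ldots, (q-1)q$ modulo $q^2$.

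The key observation is that an unramified rational prime $q$ has this property whenever every prime ideal of $\OK$ lying above $q$ has residue degree at least $2$. Indeed, if $q\OK = \mathfrak{p}_1 \cdots \mathfrak{p}_g$ with $N(\mathfrak{p}_i) = q^{f_i}$ and $\alpha = x_1\omega_1 + \cdots + x_n\omega_n \in \OK$, then $q \mid N_{K/\Q}(\alpha)$ forces $v_{\mathfrak{p}_i}(\alpha) \geq 1$ for some $i$, whence
\[
v_q\bigl(N_{K/\Q}(\alpha)\bigr) \;=\; \sum_{j=1}^g f_j\, v_{\mathfrak{p}_j}(\alpha) \;\geq\; f_i \;\geq\; 2.
\]

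To produce a positive-density set of such primes in $\P$, let $L$ be the Galois closure of $K$, set $G = \operatorname{Gal}(L/\Q)$, $H = \operatorname{Gal}(L/K)$, and recall that $G$ acts transitively on the $n$-element coset space $G/H$. By the standard Chebotarev dictionary, for an unramified prime $q$ the decomposition type of $q$ in $K$ equals the cycle structure of the Frobenius $\sigma_q \in G$ on $G/H$, so the condition ``all $f_i \geq 2$'' translates to $\sigma_q$ having no fixed point on $G/H$. Burnside's lemma gives $\frac{1}{|G|}\sum_{\sigma \in G}|\operatorname{Fix}(\sigma)| = 1$ (there is a single orbit), whereas the identity alone contributes $|\operatorname{Fix}(e)| = n > 1$; hence at least one non-identity element of $G$ has no fixed point on $G/H$, and its conjugacy class $C \subset G$ satisfies $|C| \geq 1$. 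Chebotarev's density theorem then provides a set $Q = \{q_1 < q_2 < \cdots\}$ of primes of natural density $|C|/|G| > 0$ in $\P$ whose Frobenius lies in $C$.

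To conclude, the $q_i^2$ are pairwise coprime, and Lemma \ref{lem:mbz}(3) applied with $c_i = q_i^2$ and $c_i' = q_i - 1$ (absorbing the finitely many ramified primes into the ``finite exceptions'' allowed by the lemma) yields
\[
m_{b\Z}(\overline{A}) \;\leq\; \prod_{i=1}^k \left(1 - \frac{q_i-1}{q_i^2}\right) \qquad \text{for every } k \in \N.
\]
Since $Q$ has positive density in $\P$, partial summation together with Mertens' theorem gives $\sum_i 1/q_i = \infty$; combined with $-\log\!\bigl(1 - (q-1)/q^2\bigr) \sim 1/q$, this forces the infinite product to vanish. Hence $m_{b\Z}(\overline{A}) = 0$. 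The only genuine content is the Burnside step producing fixed-point-free Frobenius elements, which is uniform across all number fields of degree $\geq 2$, so there is no real obstacle.
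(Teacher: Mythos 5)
Your proof is correct and follows essentially the same route as the paper's: identify the primes $q$ all of whose prime ideals in $\OK$ have residue degree $\geq 2$ (so $q\mid N_{K/\Q}(\alpha)$ forces $q^2\mid N_{K/\Q}(\alpha)$), show they have positive density, and apply Lemma \ref{lem:mbz}. The only difference is cosmetic: you inline the Burnside/Chebotarev argument producing a fixed-point-free Frobenius class, whereas the paper delegates that step to Glasscock's Lemma 4.1; note also that the ramified primes should simply be excluded from $Q$ rather than ``absorbed'' into the lemma's finite exceptions, which refer to exceptional elements of $A$, not exceptional moduli.
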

\begin{proof}
Similarly to the proof of Proposition \ref{prop:binary},  there exists a set of primes $Q$ with the properties that $d_\P(Q) >0$, and whenever $q | a$ where $q \in Q, a \in A$, we have $q^2 | a$. Such primes $q$ can be characterized by the residual degrees of prime ideals $\fp \subset \OK$ lying above $q$  (i.e. $\fp \cap \Z = q\Z$). This was done in detail by Glasscock \cite[Main Theorem (I)]{glasscock}, so we will just sketch the idea. 


Recall that the norm $N(I)$ of an ideal $I \subset \OK$ is the index $[\OK:I]$, and for $x \in \OK$,  $ N(x \OK) = |N_{K/\Q}(x)|$. Any ideal $I \subset \OK$ has a unique factorization as 
\[
I = \fp_1^{e_1} \cdots \fp_k^{e_k} 
\]
where $\fp_1, \ldots, \fp_k$ are prime ideals in $\OK$.

In particular, when $q$ is a rational prime and $I = q\OK$, then 
$\fp_1, \ldots, \fp_k$ are all the prime ideals of $\OK$ lying above $q$. 
For each $i=1, \ldots, k, \OK/\fp_i$ is a finite field extension of $\Z_q$; its dimension $f_i$ is called the residual degree of $\fp_i$. In particular, $N( \fp_i ) = q^{f_i}$.

\noindent \textbf{Claim:} Let $q$ be a prime and with the property that all prime ideals of $\OK$ lying above $q$ have residual degrees $> 1$. Suppose $a \in A$ and $q | a$. Then $q^2 | a$.
\begin{proof}[Proof of Claim]
Suppose $a = N_{K/\Q}(x)$ for some $x \in \OK$. Then, factoring $x\OK$ as a product of prime ideals and taking norms, we have 
\[
|a| = N( x \OK ) = \prod_{i=1}^k N( \fp_i )^{e_i},
\]
for some prime ideals $\fp_1, \ldots, \fp_k \in \OK$. Since $q | a$, there exists $i$ such that $q \mid N( \fp_i)$ and therefore $\fp_i$ lies above $q$. Since the residual degree of $\fp_i$ is greater than 1, we have $q^2 \mid  N( \fp_i ) \mid a$, as desired.
\end{proof}
To finish the proof of \cref{prop:K-algebraic}, one has to show that the set $Q$ of primes satisfying the claim has positive density in $\P$. This follows from an application of the Chebotarev density theorem, and the fact that a finite group cannot be covered by the conjugates of a proper subgroup. We refer the reader to \cite[Lemma 4.1]{glasscock} for details. 
\end{proof}

\section{Proof of \texorpdfstring{\cref{th:griesmer}}{Proposition 6.1}}
\label{sec:appendix_chromatic_not_density}


\cref{th:griesmer} is
proven by concatenating finite sets which satisfy finitary analogues of intersective property. We go through the proof to check the claimed relative reinforcement. We need a few definitions before proceeding.

A set $S\subset\N$ is \emph{$\delta$-non-intersective} if there exists $A \subset \N$ having $\overline{d}(A) > \delta$ such that $(A - A) \cap S = \varnothing$.
Given $m\in\N$ and $B \subset [m]$, we say that $(B, m)$ \emph{witnesses the $\delta$-non-intersectivity of $S$} if $|B| > \delta m$ and $B \cap (B + S) = \varnothing$, $B + S \subset [m]$, and $B + S + S \subset [m]$.

Given $k\in\N$ and $E\subset\N$, a set $S$ is called \emph{$k$-chromatically $E$-intersective} if for every partition $E=\bigcup_{i\in [k]}E_i$
we have
$S \cap \bigcup_{i \in [k]} (E_i-E_i)\neq \varnothing$.

\cref{th:griesmer} follows from the next two lemmas.
The first one is exactly Lemma 3.6 from \cite{Griesmer-separating-topological-recurrence-measurable}. Note that in \cite{Griesmer-separating-topological-recurrence-measurable}, $\delta$-non-intersective sets are called $\delta$-nonrecurrent sets and the term ``witnesses the $\delta$-non-intersectivity'' is phrased as ``witnesses the $\delta$-nonrecurrence.''
\begin{lemma}
\label{lm:threesix}
    Let $\delta,\eta\in 
    (0, 1/2 )$. Let $E, F \subset \N$ be finite sets which are $\delta$-non-intersective
and $\eta$-non-intersective, respectively. If $(A, m)$ witnesses the $\delta$-non-intersectivity of $E$, then for
all sufficiently large $l \in \N$, there exists $C \in [lm]$ with $A \subset C$ such that $(C, lm)$ witnesses
the $2\delta\eta$-non-intersectivity of $E \cup m\cdot F$. Consequently, for all sufficiently large $m$, $E \cup m\cdot F$ is $2\delta\eta$-non-intersective.
\end{lemma}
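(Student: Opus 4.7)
The plan is to construct $C$ as a union of $m$-separated translates of $A$ whose shift indices themselves form a finite witness for the $\eta$-non-intersectivity of $F$ at scale $l$. Concretely, suppose I can produce a set $D'\subset\{0,1,\dots,l-1\}$ with $0\in D'$, $D'\cap(D'+F)=\varnothing$, $D'+F+F\subset\{0,1,\dots,l-1\}$, and $|D'|$ as close to $\eta l$ as desired. Setting
\[
C:=\bigcup_{d\in D'}(A+md),
\]
and using that $A+E+E\subset[m]$, the translates $A+md$ for $d\in D'$ are contained in pairwise disjoint length-$m$ blocks within $[lm]$, so $|C|=|A|\cdot|D'|$ and $A\subset C$ via $0\in D'$.

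To produce $D'$, I would use that $F$ is $\eta$-non-intersective: there exists $D\subset\N$ with $\overline{d}(D)>\eta$ and $(D-D)\cap F=\varnothing$. For any $\gamma<\eta$ pick $N$ with $|D\cap[N]|>\gamma N$, then truncate to $D_0\subset[N-2\max F]$ so that $D_0+F+F\subset[N]$. For every sufficiently large $l$ I tile by shifted copies, $D'':=\bigcup_{j=0}^{\lfloor l/N\rfloor-1}(D_0+jN)$; disjointness of the $F$-shifted blocks gives $D''\cap(D''+F)=\varnothing$ and $D''+F+F\subset\{0,1,\dots,l-1\}$ for large $l$, while $|D''|/l$ approaches $\gamma$ from below. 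Adjoining $0$ if necessary yields $D'$.

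The two non-intersectivity conditions on $C$ then decouple by scale. For $c=a+md\in C$ and $e\in E$, the sum $c+e=(a+e)+md$ remains in the same length-$m$ block $(md,m(d+1)]$ because $a+e\in[m]$, so $c+e\in C$ would force $c+e=a'+md$ for some $a'\in A$, giving $a+e=a'\in A\cap(A+E)$, empty by hypothesis. For $c=a+md\in C$ and $f\in F$, the sum $c+mf=a+m(d+f)$ lies in the block $(m(d+f),m(d+f+1)]$, so $c+mf\in C$ would force $d+f\in D'$, contradicting $D'\cap(D'+F)=\varnothing$. The containment $C+(E\cup m\cdot F)+(E\cup m\cdot F)\subset[lm]$ follows from $A+E+E\subset[m]$ and $D'+F+F\subset\{0,1,\dots,l-1\}$.

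The main technical nuisance will be extracting the precise constant $2\delta\eta$. The construction yields $|C|>\delta m\cdot\gamma l=\delta\gamma\cdot lm$ with $\gamma$ slightly below $\eta$, so it naturally delivers a density close to $\delta\eta$; recovering the factor of $2$ requires careful accounting of the strict inequalities $|A|>\delta m$ and $|D'|>\gamma l$ together with the assumption $\delta,\eta<1/2$. Apart from this bookkeeping the structural argument is robust, and the ``consequently'' clause is immediate by applying the construction to any $(A,m)$ witnessing the $\delta$-non-intersectivity of $E$ for large $m$.
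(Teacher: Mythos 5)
The paper does not actually prove this lemma: it is quoted verbatim from Griesmer's paper (Lemma~3.6 of \cite{Griesmer-separating-topological-recurrence-measurable}) and used as a black box. So I am assessing your argument on its own terms and against a reconstruction of that cited proof.

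Your construction $C=\bigcup_{d\in D'}(A+md)$ is correct as far as the non-intersectivity and containment checks go, but it has a genuine gap in the density count that no amount of bookkeeping can close. You get $|C|=|A|\,|D'|$. Now for any $e_0\in E$ the witness conditions force $A$ and $A+e_0$ to be disjoint subsets of $[m]$, so $|A|\le m/2$; similarly $|D'|\le l/2$ when $F\neq\varnothing$. Hence the density of your $C$ in $[lm]$ lies in the interval $(\delta\eta,\,1/4]$. But the lemma requires density strictly greater than $2\delta\eta$, and as soon as $\delta\eta>1/8$ (e.g.\ $\delta=\eta=0.4$, giving $2\delta\eta=0.32>1/4$) this is flatly impossible for the product construction. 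You already flagged the constant $2\delta\eta$ as a nuisance, but the issue is not bookkeeping: the block construction simply caps out at $1/4$, and the hypotheses $\delta,\eta<1/2$ are exactly what makes $2\delta\eta$ attainable by a better construction, not by tightening the inequalities in yours.

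The missing idea is a doubling. Pick $e_0\in E$ and $f_0\in F$ (the degenerate cases $E=\varnothing$ or $F=\varnothing$ are handled directly, since then one coordinate can be filled completely and the target density $2\delta\eta<\min(\delta,\eta)$ is trivially reached). With $D'$ a witness for the $\eta$-non-intersectivity of $F$ inside $[l]$ as you constructed, set
\[
C \;=\; \bigcup_{d\in D'}\bigl(A+m(d-1)\bigr)\;\cup\;\bigcup_{d\in D'+f_0}\bigl((A+e_0)+m(d-1)\bigr).
\]
Because $D'\cap(D'+F)=\varnothing$ the index sets $D'$ and $D'+f_0$ are disjoint, and because $A\cap(A+E)=\varnothing$ the fibres $A$ and $A+e_0$ are disjoint; so $|C|=2|A|\,|D'|>2\delta m\cdot\eta l=2\delta\eta\, lm$, as required. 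The checks $C\cap(C+E)=\varnothing$ and $C\cap(C+mF)=\varnothing$ go through exactly as in your version, using precisely these two disjointnesses (e.g.\ an $mF$-shift can carry a block from $D'$ into $D'+f_0$, but there it meets $A+e_0$, which is disjoint from $A$). The boundary condition $C+(E\cup mF)+(E\cup mF)\subset[lm]$ costs only an $O(1)$ trim of $D'$ at the top (to absorb the extra $+f_0$ shift and the extra $+e_0$ shift), which does not affect the density estimate for large $l$. Your remaining observations, including the ``consequently'' clause, are essentially sound once this construction replaces yours.
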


The second one is almost Lemma 3.8 from \cite{Griesmer-separating-topological-recurrence-measurable}, the difference being
that the conclusion here is relative to $E$ (hence stronger). 
\begin{lemma}
\label{lm:threeeight}
Let $k, m \in \N$ and $\delta \in (0,1/2)$. If $E \subset \Z$ is infinite, then there is a finite
$\delta$-non-intersective set $S \subset \N$ such that $m\cdot S \subset E - E$ and $m\cdot S$ is $k$-chromatically $E$-intersective.
\end{lemma}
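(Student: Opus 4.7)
My plan is to adapt the proof of (the non-relative analogue of) \cite[Lemma 3.8]{Griesmer-separating-topological-recurrence-measurable} to the relative setting, by inserting a preliminary pigeonhole reduction that renormalizes $E$ and then reusing the original construction.

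First I would apply pigeonhole on the residues of $E$ modulo $m$: since $E$ is infinite, there is some $r \in \Z/m\Z$ for which $E_0 := E \cap (r + m\Z)$ is infinite. Define $E' := (E_0 - r)/m$, so $E'$ is an infinite subset of $\Z$. Any $k$-partition $E = E_1 \sqcup \cdots \sqcup E_k$ restricts to a $k$-partition of $E_0$, which transports via $e \mapsto (e - r)/m$ to a $k$-partition of $E'$, with all differences scaled by $m$. It therefore suffices to produce a finite $\delta$-non-intersective set $S \subseteq \N$ contained in $E' - E'$ such that for every $k$-partition of $E'$, some color class has a difference belonging to $S$; indeed, $m \cdot S$ will then lie in $m \cdot (E' - E') = E_0 - E_0 \subseteq E - E$, and the $k$-chromatic $E$-intersectivity of $m \cdot S$ will follow directly from the $E'$-level chromatic intersectivity of $S$ by pulling back through the rescaling.

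To construct such $S$, I would follow the inductive strategy of the original Lemma 3.8, which builds $S$ from below by combining smaller finite non-intersective sets via Lemma \ref{lm:threesix}. At each inductive step the original proof picks a new integer (or a new finite auxiliary set) that needs only to satisfy a residue/interval condition modulo some parameter $M$ arising from the previous witness. In the relative setting we additionally require this integer to lie in $E' - E'$: for any such residue/interval condition, an extra pigeonhole on $E' \pmod{M}$ produces two elements of $E'$ in the same residue class modulo $M$, so their difference is a multiple of $M$, and by varying the choice one can realize the required value. Since there are only finitely many induction steps, only finitely many such pigeonhole refinements are needed, and the resulting $S$ is finite, $\delta$-non-intersective (by iterated application of Lemma \ref{lm:threesix} with parameters chosen to keep the cumulative density strictly above $\delta$), $k$-chromatically $E'$-intersective, and contained in $E' - E'$.

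The main obstacle is verifying that Lemma \ref{lm:threesix}'s compatibility requirements can actually be met by differences from $E' - E'$ rather than by arbitrary integers; this is a question of ensuring enough residue-class flexibility in $E'$, which the infinitude of $E'$ readily provides through repeated pigeonhole. A secondary subtlety is tracking the quantitative density loss through each application of Lemma \ref{lm:threesix}: one must start with a density strictly greater than $\delta$ and insert sufficiently many buffer factors so that, after $O(k)$ iterations, the witness density remains above $\delta$. Both of these issues are handled by the analogous mechanisms in the original non-relative proof, so once the pigeonhole reduction to $E'$ is in place the adaptation is essentially routine.
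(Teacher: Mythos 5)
Your preliminary pigeonhole reduction is fine and matches the paper's first step (passing to an infinite $E'\subset E$ that is constant mod $m$ and then dividing out $m$ at the end). The rest of the plan, however, rests on a misidentification of how Lemma \ref{lm:threeeight} is actually proved. You write that the original Lemma 3.8 ``builds $S$ from below by combining smaller finite non-intersective sets via Lemma \ref{lm:threesix}.'' That is not what happens: Lemma \ref{lm:threesix} is the concatenation tool used in the proof of \cref{th:griesmer}, where one alternately invokes Lemma \ref{lm:threeeight} (to get a new building block) and Lemma \ref{lm:threesix} (to splice it onto the accumulated set). Lemma \ref{lm:threeeight} itself is \emph{not} an inductive concatenation --- it is a one-shot construction. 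You have essentially collapsed the proof of \cref{th:griesmer} into the proof of its own ingredient.

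The gap this creates is fatal to the plan: Lemma \ref{lm:threesix} says nothing about chromatic intersectivity. It only controls the non-intersectivity parameter, taking a $\delta$-non-intersective set and an $\eta$-non-intersective set to a $2\delta\eta$-non-intersective union. The genuinely hard content of Lemma \ref{lm:threeeight} is producing a set that is \emph{simultaneously} $\delta$-non-intersective, $k$-chromatically $E'$-intersective, and contained in $E'-E'$. No amount of residue bookkeeping or ``pigeonhole for flexibility'' will conjure chromatic intersectivity out of Lemma \ref{lm:threesix}. What is actually needed is the Hamming-ball/Kneser-graph mechanism: choose $d$ and $\epsilon$ so that $\tilde H(\alpha;2k+1,\epsilon)$ is $\delta$-non-intersective for all $\alpha\in\T^d$ (Lemma \ref{lm:fouronei}); choose $\alpha$ so that $E'\alpha$ is dense in $\T^d$ (Lemma \ref{lm:sixtwo}); then use the graph transfer Lemma \ref{lem:transfer_to_discrete} together with Lov\'asz's bound on the chromatic number of Kneser graphs (Lemma \ref{lm:twofive}) to find a finite $2k$-chromatically $E'$-intersective subset of $\tilde H(\alpha;2k+1,\epsilon)\cap(E'-E')$, which by construction is automatically $\delta$-non-intersective; this is exactly Lemma \ref{lm:fouroneiii}. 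Your proposal never touches any of this, so the construction of $S$ is missing.
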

A simple inspection of the proof of \cite{Griesmer-separating-topological-recurrence-measurable} reveals that it actually proves this stronger version. For completeness, we provide the proof 
of \cref{lm:threeeight} below.
We now show how these two lemmas yield the desired result.
\begin{proof}[Proof of Theorem \ref{th:griesmer}]
Let $\delta\in (0,1/2)$ and $E\subset\N$. We shall find a chromatically $E$-intersective set $S$ and a
set $C \subset \N$ with $\overline{d}(C) \geq \delta$ such that $(C-C)\cap S=\varnothing$.
To build $S$ and $C$, we will use Lemmas \ref{lm:threesix} and \ref{lm:threeeight} to find increasing sequences of sets
$(S_n),(C_n)$, and intervals $[m_1], [m_2],\ldots$, with $m_k \rightarrow\infty$, so that the
following conditions hold for all $k \in \N$.
\begin{enumerate}[label=(\roman*)]
    \item $S_k\subset E-E$ is $k$-chromatically $E$-intersective,
    \item $C_k \subset [m_k], C_k + S_k \subset [m_k], C_k + S_k + S_k \subset [m_k ]$, and $|C_k| > \delta m_k$
    \item $(C_k - C_k) \cap  S_k = \varnothing$.
\end{enumerate}
Having constructed these sets, let $C := \bigcup_{k=1}^\infty C_k$ and $S := \bigcup_{k=1}^\infty S_k$. Then $(C - C) \cap  S = \varnothing$;
otherwise for some $k \in\N$ we would have $c - c'  = s$ for some $c, c'  \in C_k$ and $s \in S_k$. Item (i) implies $S$ is chromatically
$E$-intersective and Item
(ii) implies $\overline{d}(C) \geq \delta $ (as $|C_k \cap [m_k ]|/
m_k > \delta $ for every $k$.)

To find $S_k$ and $C_k$, we first choose $S_1 = \{1\}$, $m_1 = 2$, and $C_1 = \{0\}$, so that (i)-(iii) are
trivially satisfied with $k = 1$.
To perform the inductive step, we assume the sets $S_k, C_k$, the integer $m_k$ have
been constructed and choose $\delta_k > \delta $ so that $|C_k| > \delta_km_k$. Since $\delta_k > \delta $,
we may choose $\eta < 1/2$ so that $2\delta_k\eta > \delta $. \cref{lm:threeeight} provides a finite set $S'$ which is $\eta$-non-intersective such that $m_k\cdot S'\subset E-E$ and $m_k\cdot S'$ is
$(k + 1)$-chromatically $E$-intersective. Apply Lemma \ref{lm:threesix} to find $l \geq 2$
and $C_{k+1} \subset[lm_k]$ such that $(C_{k+1}, lm_k)$ witnesses the $2\delta_k\eta$-non-intersectivity of $S_k \cup m_k\cdot S' $.
Setting $m_{k+1} = lm_k$
and $S_{k+1} = S_k \cup m_k\cdot S' \subset E-E$, we get that (i)-(iii) are satisfied with $k + 1$ in place of $k$.
\end{proof}

It remains to provide a proof of Lemma \ref{lm:threeeight}. 
To do that,
 we need some preliminaries about graph theory and chromatic numbers.

A graph $\mathcal{G}$ is a set $V$ whose elements are called \emph{vertices}, together with a set $E$ of unordered pairs of elements of $V$, called \emph{edges}. A \emph{$k$-coloring} of $\mathcal{G}$ is a function $f: V \to \{1, \ldots, k\}$. We say $f$ is \emph{proper} if $f(v_1) \neq f(v_2)$ for every edge $(v_1, v_2) \in E$. The \emph{chromatic number} of $\mathcal{G}$ is the smallest $k$ such that there is a proper $k$-coloring of $\mathcal{G}$. 

Given an abelian group $\Gamma$ and subsets $V,S \subset \Gamma$, the \emph{Cayley graph based on $V,S$}, denoted $\operatorname{Cay}(V,S)$, is the graph whose vertex set is $V$, with two vertices $x, y$ joined by an edge if $x - y \in S$ or $y - x \in S$. It follows from the definitions that $S$ is $k$-chromatically $E$-intersective if and only if the chromatic number of $\operatorname{Cay}(E,S)$ is strictly greater than $k$. If the vertex set is the ambient group, i.e. $V=\Gamma$,
we abbreviate $\operatorname{Cay}(V,S)=\operatorname{Cay}(S)$.

The next lemma is essentially Lemma 5.1 in \cite{Griesmer-separating-topological-recurrence-measurable}. However, we need a slightly stronger conclusion than the result in \cite{Griesmer-separating-topological-recurrence-measurable} in the sense that the isomorphic copy of the graph $\mathcal{G}$ found in $\operatorname{Cay}(\rho^{-1}(U) \cap (E - E))$ has vertex set $E$ (instead of $G$). The proof is the same as the proof of Lemma 5.1 in \cite{Griesmer-separating-topological-recurrence-measurable}. Since the proof is short, we include it here for completeness.

\begin{lemma}[cf. {\cite[Lemma 5.1]{Griesmer-separating-topological-recurrence-measurable}}]
\label{lem:transfer_to_discrete}
Let $G$ be a discrete abelian group, $K$ be a Hausdorff abelian group $K$, and $\rho: G \to K$ be a homomorphism. Assume $E \subset G$ is such that $\rho(E)$ is dense in $K$.
 If $U \subset K$ is open, then every finite subgraph of $\operatorname{Cay}(U)$ has an isomorphic copy in $\operatorname{Cay}(E,\rho^{-1}(U) \cap (E - E))$. 

As a consequence, if $\operatorname{Cay}(U)$ has a finite subgraph of chromatic number $\geq k$, then 
\[
    \operatorname{Cay}(E,\rho^{-1}(U) \cap (E - E))
\]
has chromatic number $\geq k$.
\end{lemma}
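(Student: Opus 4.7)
The plan is to take a finite subgraph $H$ of $\Cay(U)$ with distinct vertices $v_1,\ldots,v_n\in K$ and construct distinct elements $e_1,\ldots,e_n\in E$ such that $v_i\mapsto e_i$ is an injective graph homomorphism into $\Cay(E,\rho^{-1}(U)\cap (E-E))$. The three tools are (i) continuity of subtraction in the topological group $K$ combined with openness of $U$, (ii) density of $\rho(E)$ in $K$, and (iii) the Hausdorff property, which we use to keep the $e_i$'s distinct.

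First, for each edge $\{v_i,v_j\}$ of $H$, pick an orientation $(i,j)$ so that $v_i-v_j\in U$. By continuity of subtraction and openness of $U$, there exist open neighborhoods $W^+_{(i,j)}\ni v_i$ and $W^-_{(i,j)}\ni v_j$ with $W^+_{(i,j)}-W^-_{(i,j)}\subset U$. Since $K$ is Hausdorff and the $v_i$ are distinct, we may also fix pairwise disjoint open neighborhoods $D_1,\ldots,D_n$ with $v_i\in D_i$. For each $i$, let $V_i$ be the intersection of $D_i$ with the finite collection of all $W^+_{(i,j)}$ (over oriented edges out of $i$) and all $W^-_{(k,i)}$ (over oriented edges into $i$); each $V_i$ is a finite intersection of open neighborhoods of $v_i$, hence itself an open neighborhood of $v_i$, and the $V_i$'s are pairwise disjoint.

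Next, using density of $\rho(E)$ in $K$, for each $i$ pick $e_i\in E$ with $\rho(e_i)\in V_i$. The disjointness of the $V_i$'s forces the $\rho(e_i)$'s, and hence the $e_i$'s, to be pairwise distinct. For any edge $\{v_i,v_j\}$ with chosen orientation $(i,j)$,
\[
\rho(e_i-e_j)=\rho(e_i)-\rho(e_j)\in V_i-V_j\subset W^+_{(i,j)}-W^-_{(i,j)}\subset U,
\]
so $e_i-e_j\in\rho^{-1}(U)\cap(E-E)$, and thus $\{e_i,e_j\}$ is an edge of $\Cay(E,\rho^{-1}(U)\cap(E-E))$. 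This realizes $H$ as a subgraph.

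For the consequence, if $\Cay(U)$ contains a finite subgraph $H$ with chromatic number $\geq k$, then the argument above embeds $H$ as a subgraph of $\Cay(E,\rho^{-1}(U)\cap(E-E))$; since the chromatic number of a graph is at least that of any of its subgraphs, we deduce $\chi(\Cay(E,\rho^{-1}(U)\cap(E-E)))\geq k$. The only mild subtlety is the simultaneous bookkeeping at each vertex $v_i$ (getting one neighborhood $V_i$ that respects every incident edge at once), which is handled by a finite intersection; aside from this the proof is a straightforward application of the three ingredients above.
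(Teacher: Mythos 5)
Your proof is correct and takes essentially the same approach as the paper: use density of $\rho(E)$ to pick $e_i$ with $\rho(e_i)$ close to $v_i$, use openness of $U$ and continuity of subtraction to make "close enough" preserve the edge relation, and use the Hausdorff property to keep the $e_i$ distinct. The only cosmetic difference is that the paper works with a single uniform neighborhood $W'$ of $0$ (so that $\rho(g_v)\in v+W'$ for all $v$ at once), whereas you construct per-edge neighborhoods $W^{\pm}_{(i,j)}$ and then take a finite intersection at each vertex; the two bookkeeping schemes achieve the same thing.
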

\begin{proof}
   To prove the first statement of the lemma it suffices to prove that if $V$ is a finite
subset of $K$, then there exist $\{g_v : v \in V \} \subset E$ such that for each $v, v' \in V$ , we have
\begin{equation}
\label{eq:fiveone}
    v - v' \in U \Rightarrow g_v - g_v' \in \rho^{-1}(U).
\end{equation} 
So let $V$ be a finite subset of $K$. Let $S := (V - V ) \cap U$, and let $W$ be a neighborhood of
$0$ in $K$ so that $S + W \subset U$ (one may take $W=\bigcap_{s\in S}(U-s)$, since $S\subset U$ is finite). Choose a neighborhood $W '$ of $0$ so that $W ' - W ' \subset W $. For
each $v \in V$ , choose $g_v \in E $ so that $\rho(g_v) \in v + W '$; this is possible since $\rho(E)$ is dense in
$K$. Also, since $K$ is Hausdorff, we can ensure that $g_v \neq g_{v'}$ if $v \neq v'$. 
We now prove equation \eqref{eq:fiveone} holds with these $g_v$. Assuming $v - v' \in U$, we have
$\rho(g_v) - \rho(g_v' ) \in v + W ' - (v' + W ') = (v - v') + (W ' - W ') \subset v - v' + W \subset S + W \subset \subset U$,
so $g_v - g_v' \in \rho^{-1}(U)$. This proves \eqref{eq:fiveone}.
Since chromatic number is invariant under isomorphism of graphs, the second assertion
of the lemma follows immediately from the first. 
\end{proof}

For $\mathbf{x} = (x_1, \ldots, x_d) \in \F_2^d := (\Z/2\Z)^d$, we define
\[
    w(\mathbf{x}) = \left|\{1 \leq j \leq d: x_j \neq 0\}\right|.
\]
Given $1 \leq k \leq d$ and $\mathbf{y} \in \F_2^d$, the \emph{Hamming ball of radius $k$ around $\mathbf{y}$} is 
\[
    H_k(\mathbf{y}) = \{\mathbf{x} \in \F_2^d: w(\mathbf{x} - \mathbf{y}) \leq k \}.
\]
In other words, $H_k(\mathbf{y})$ is the set of $\mathbf{x} = (x_1, \ldots, x_d)$ such that $x_j \neq y_j$ for at most $k$ coordinates $j$.
We cite without proof the following result, which is a direct consequence of Lov\'asz's theorem \cite{lovasz} on the chromatic number of Kneser graphs.
\begin{lemma}[cf. {\cite[Lemma 2.5]{Griesmer-separating-topological-recurrence-measurable}}]
\label{lm:twofive}
Let $k, d \in \N$ with $2k \leq d$. Then the Cayley graph $\operatorname{Cay}(H_{2k+1}(\mathbf{1}))$
 has chromatic number at least $2k+1$. Here $\mathbf{1}$ is the vector in $\F_2^d$, all of whose components are 1.
\end{lemma}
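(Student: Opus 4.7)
The plan is to exhibit a Kneser graph $KG(d, r)$ as a subgraph of $\operatorname{Cay}(H_{2k+1}(\mathbf{1}))$ for a suitable $r$ and then invoke Lov\'asz's theorem, which asserts $\chi(KG(n,r)) = n - 2r + 2$ whenever $n \geq 2r \geq 2$ (here $KG(n,r)$ has the $r$-subsets of $[n]$ as vertices, with disjoint pairs joined by an edge).

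The first step is to rewrite $H_{2k+1}(\mathbf{1})$ in a way that exposes the Hamming metric. In $\F_2^d$, adding $\mathbf{1}$ flips every coordinate, so $w(\mathbf{z} - \mathbf{1}) = d - w(\mathbf{z})$, and therefore
\[
H_{2k+1}(\mathbf{1}) = \{\mathbf{z} \in \F_2^d : w(\mathbf{z}) \geq d - 2k - 1\}.
\]
Equivalently, two vertices $\mathbf{x}, \mathbf{y} \in \F_2^d$ are adjacent in $\operatorname{Cay}(H_{2k+1}(\mathbf{1}))$ precisely when their Hamming distance $w(\mathbf{x} - \mathbf{y})$ is at least $d - 2k - 1$.

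The second step is to choose an integer $r$ satisfying both $2r \geq d - 2k - 1$ (so that disjoint $r$-subsets of $[d]$ map to edges of the Cayley graph) and $d - 2r + 2 \geq 2k + 1$ (so that Lov\'asz's bound delivers the required chromatic number). Both conditions read $d - 2k - 1 \leq 2r \leq d - 2k + 1$, and are satisfied by $r := \lceil (d - 2k)/2 \rceil$ when $d > 2k$; the boundary case $d = 2k$ is handled separately, since there $H_{2k+1}(\mathbf{1}) = \F_2^d$ makes the Cayley graph complete on $2^{2k} \geq 2k + 1$ vertices. Identifying each $r$-subset $A \subseteq [d]$ with its characteristic vector $\chi_A \in \F_2^d$, disjoint subsets $A, B$ satisfy $w(\chi_A - \chi_B) = |A \triangle B| = 2r \geq d - 2k - 1$, so $\chi_A - \chi_B \in H_{2k+1}(\mathbf{1})$. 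Hence the map $A \mapsto \chi_A$ realizes $KG(d, r)$ as a subgraph of $\operatorname{Cay}(H_{2k+1}(\mathbf{1}))$, and any proper coloring of the latter restricts to a proper coloring of $KG(d, r)$.

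Applying Lov\'asz's theorem to the embedded Kneser graph yields
\[
\chi(\operatorname{Cay}(H_{2k+1}(\mathbf{1}))) \geq \chi(KG(d, r)) = d - 2r + 2 \geq 2k + 1,
\]
which is the desired conclusion. All of the substantive content is imported from Lov\'asz's theorem, cited as a black box; the only genuine obstacle is to spot the correct embedding and pin down $r$ from the two-sided inequality above. After that, the argument is a short verification with Hamming weights.
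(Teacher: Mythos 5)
Your proof is correct. The paper states this lemma without proof, citing \cite[Lemma 2.5]{Griesmer-separating-topological-recurrence-measurable} as ``a direct consequence of Lov\'asz's theorem,'' and your embedding of $KG(d,r)$ with $r=\lceil(d-2k)/2\rceil$ via characteristic vectors is exactly the standard derivation: the arithmetic $d-2k-1\leq 2r\leq d-2k+1$ and $d>2r$ (using $k\geq 1$) checks out, and the degenerate case $d=2k$ is correctly handled separately.
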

Let $G_d = \{0, 1/2\}^d \subset \T^d$ (it is a subgroup). Note that $G_d$ is isomorphic to $(\Z/2\Z)^d$, so we will identify $H_k(\mathbf{1}) \subset (\Z/2\Z)^d$ with  the set of elements of $G_d$
which have at most $k$ coordinates that differ from $1/2$, and denote the latter by $H_k(\mathbf{1/2})$.

Let $V_{\epsilon}$ denote the open box $\{\mathbf{x} \in \T^d: \max |x_j| < \epsilon\}$. For $\mathbf{\alpha} \in \T^d$, define
\[
    \tilde{H}(\mathbf{\alpha}, k, \epsilon) = \{n \in \Z: n \mathbf{\alpha} \in H_k(\mathbf{1/2}) + V_{\epsilon}\}.
\]

\begin{lemma}[cf. Lemma 4.1(iii) in \cite{Griesmer-separating-topological-recurrence-measurable}]
\label{lm:fouroneiii}
Let $\alpha \in \T^d, 2k \leq d$ and $\epsilon > 0$.
If $E \subset \Z$ is such that $E \alpha$ is dense in $\T^d$, then there is a finite subset of $\tilde{H}(\alpha; 2k + 1, \epsilon) \cap (E - E)$ which is $2k$-chromatic $E$-intersective. 
\end{lemma}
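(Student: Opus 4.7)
The plan is to apply the transfer lemma, Lemma \ref{lem:transfer_to_discrete}, with $G = \Z$, $K = \T^d$, and the homomorphism $\rho : \Z \to \T^d$ given by $\rho(n) = n\alpha$. By hypothesis $\rho(E) = E\alpha$ is dense in $K$. Set $U := H_{2k+1}(\mathbf{1/2}) + V_\epsilon$, an open subset of $\T^d$; by the very definition of $\tilde{H}(\alpha; 2k+1, \epsilon)$ we have $\rho^{-1}(U) = \tilde{H}(\alpha; 2k+1, \epsilon)$, so the target set appears naturally on the discrete side.

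Next I would exhibit a finite subgraph of $\Cay(\T^d, U)$ of chromatic number at least $2k+1$. The natural candidate is the subgraph induced on the finite subgroup $G_d = \{0, 1/2\}^d$. Since $0 \in V_\epsilon$, one has $H_{2k+1}(\mathbf{1/2}) \subset U$, so any two $x, y \in G_d$ with $x - y \in H_{2k+1}(\mathbf{1/2})$ are joined by an edge in $\Cay(\T^d, U)$. Hence the induced subgraph contains the Cayley graph $\Cay(G_d, H_{2k+1}(\mathbf{1/2}))$ as a spanning subgraph. The group isomorphism $G_d \cong \F_2^d$ sending $1/2 \mapsto 1$ identifies $H_{2k+1}(\mathbf{1/2})$ with $H_{2k+1}(\mathbf{1})$, so this graph is isomorphic to $\Cay(\F_2^d, H_{2k+1}(\mathbf{1}))$, which has chromatic number at least $2k+1$ by Lemma \ref{lm:twofive}.

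Finally, Lemma \ref{lem:transfer_to_discrete} produces an isomorphic copy of this finite subgraph inside $\Cay(E, \rho^{-1}(U) \cap (E-E)) = \Cay(E, \tilde{H}(\alpha; 2k+1, \epsilon) \cap (E-E))$. Because the copy is finite, it uses only finitely many differences; take $F$ to be the finite set of those differences. Then $F \subset \tilde{H}(\alpha; 2k+1, \epsilon) \cap (E-E)$ and $\Cay(E, F)$ contains the transferred copy as a subgraph, so its chromatic number is at least $2k+1$; that is, $F$ is $2k$-chromatically $E$-intersective, as required. I do not anticipate a serious obstacle: the heavy lifting is packaged into Lemma \ref{lm:twofive} (which rests on Lov\'asz's theorem on Kneser graphs) and into the transfer lemma, whose conclusion is already stated relative to $E$. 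The only checks are that $H_{2k+1}(\mathbf{1/2}) \subset U$ (immediate from $0 \in V_\epsilon$) and that the isomorphism $G_d \cong \F_2^d$ matches the two Hamming balls (immediate from the definitions).
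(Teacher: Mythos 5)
Your proposal is correct and follows essentially the same route as the paper: apply Lemma \ref{lem:transfer_to_discrete} with $\rho(n)=n\alpha$ and $U=H_{2k+1}(\mathbf{1/2})+V_\epsilon$, use Lemma \ref{lm:twofive} via the isomorphism $G_d\cong\F_2^d$ to get the chromatic-number lower bound, and note $\rho^{-1}(U)=\tilde{H}(\alpha;2k+1,\epsilon)$. Your explicit extraction of the finite set $F$ of differences used by the transferred copy is a welcome (if minor) sharpening, since the lemma asks for a finite subset while the paper's write-up leaves that step implicit.
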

\begin{proof}
    We use \cref{lem:transfer_to_discrete} with the open set $U = H_{2k + 1}(\mathbf{1/2}) + V_{\epsilon}$ and $\rho(n) = n \alpha$. Since $H_{2k + 1}(\mathbf{1/2}) \subset U$, $\operatorname{Cay}(U)$ contains the finite subgraph $\operatorname{Cay}(H_{2k + 1}(\mathbf{1/2}))$ (with vertex set $G_d$). Note that $\operatorname{Cay}(G_d,H_{2k + 1}(\mathbf{1/2}))$ is isomorphic to $\operatorname{Cay}(H_{2k+1}(\mathbf{1}))$, and by Lemma \ref{lm:twofive}, these graph have chromatic number $\geq 2k + 1$.

    Now \cref{lem:transfer_to_discrete} implies $\operatorname{Cay}(E,\rho^{-1}(U) \cap (E- E))$ has chromatic number $\geq 2k + 1$. Examining the definitions, we see that $\rho^{-1}(U) = \tilde{H}(\mathbf{\alpha}; 2k + 1, \epsilon)$ and so $\tilde{H}(\mathbf{\alpha}; 2k + 1, \epsilon) \cap (E - E)$ is $2k$-chromatic $E$-intersective.
\end{proof}

Before proving Lemma \ref{lm:threeeight}, we 
 state without
proof three lemmas directly drawn from \cite{Griesmer-separating-topological-recurrence-measurable}.
Given $S \subset\Z$ write
$S/m$ for the set $\{n \in\Z : mn \in S\}(= \{n/m : n \in S\}\cap\Z)$.

\begin{lemma}[cf. {\cite[Lemma 6.1]{Griesmer-separating-topological-recurrence-measurable}}]
\label{lm:sixone}
If $m \in N$ and $S \subset\Z$ is $\delta$-non-intersective, then $S/m$ is also $\delta$-non-intersective.
\end{lemma}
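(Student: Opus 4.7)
The natural strategy is to descend from a witness $A\subset\N$ for the $\delta$-non-intersectivity of $S$ to a witness $B\subset\N$ for $S/m$, by restricting $A$ to a well-chosen arithmetic progression modulo $m$ and then contracting by a factor of $m$.

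Concretely, for each residue $r\in\{0,1,\ldots,m-1\}$ set
\[
B_r := \{\, n\in\N : mn+r\in A\,\}.
\]
The avoidance side of the argument is essentially free: if $d\in (B_r-B_r)\cap (S/m)$, pick $b_1,b_2\in B_r$ with $b_1-b_2=d$; then $mb_1+r$ and $mb_2+r$ both lie in $A$, so $md=(mb_1+r)-(mb_2+r)\in A-A$, while $md\in S$. This contradicts $(A-A)\cap S=\varnothing$, so in fact $(B_r-B_r)\cap (S/m)=\varnothing$ for \emph{every} $r$. It remains only to choose $r$ so that $\overline{d}(B_r)>\delta$.

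For the density, I would run a double pigeonhole. First, take $N_k\to\infty$ with $|A\cap [N_k]|/N_k\to\overline{d}(A)$; writing $N_k=mL_k+s_k$ with $s_k\in\{0,\ldots,m-1\}$ and pigeonholing on $s_k$, one finds a subsequence along which $|A\cap [mL_k]|/(mL_k)\to\overline{d}(A)$ as well (after a trivial $O(1)$ adjustment of $L_k$). Since
\[
|A\cap [mL_k]|=\sum_{r=0}^{m-1}|B_r\cap [L_k]|+O(1),
\]
dividing by $L_k$ gives $\sum_{r=0}^{m-1}|B_r\cap [L_k]|/L_k \to m\cdot\overline{d}(A)$. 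For each $k$ some $r_k\in\{0,\ldots,m-1\}$ satisfies $|B_{r_k}\cap [L_k]|/L_k\geq \overline{d}(A)-o(1)$, and a second pigeonhole on the finitely many values $r_k$ produces a fixed $r$ attaining this bound along a subsequence. Hence $\overline{d}(B_r)\geq \overline{d}(A)>\delta$, and $B_r$ witnesses the $\delta$-non-intersectivity of $S/m$.

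The only mildly technical step is aligning the density-realising sequence $(N_k)$ with the lattice $m\N$, which is the role of the first pigeonhole on $s_k$; everything else is an elementary averaging argument, and I do not foresee any genuine obstacle.
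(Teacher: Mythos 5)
The paper states this lemma without proof, deferring to Lemma~6.1 of Griesmer's paper, so there is no in-paper argument to compare against; your proof is correct and is the natural one. The avoidance step is clean, and the double pigeonhole is exactly what is needed: upper density does not split over the $m$ residue classes along a single sequence, and your alignment of the density-realising sequence with $m\N$ followed by pigeonholing on the residue $r$ correctly produces a fixed $r$ with $\overline{d}(B_r)\geq \overline{d}(A)$ (the rescaling from $[mL_k]$ to $[L_k]$ supplying the factor of $m$ that prevents the bound from degrading to $\overline{d}(A)/m$).
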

\begin{lemma}[cf. {\cite[Lemma 6.2]{Griesmer-separating-topological-recurrence-measurable}}]
    \label{lm:sixtwo} Let $E \subset\Z$ be an infinite set and $d \in \N$. Then there is an $\alpha \in \T^d$ such that
$E\alpha := \{n\alpha : n \in E\}$ is topologically dense in $\T^d$.
\end{lemma}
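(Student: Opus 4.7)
The plan is to argue via the Baire category theorem applied to the compact metric space $\T^d$. Let $\{U_m\}_{m \geq 1}$ be a countable basis of nonempty open sets of $\T^d$, and for each $m$ define
\[
V_m := \bigcup_{n \in E} \{\alpha \in \T^d : n\alpha \in U_m\}.
\]
Each $V_m$ is open as a union of preimages of $U_m$ under the continuous multiplication-by-$n$ maps. If one can show that each $V_m$ is dense in $\T^d$, then Baire category yields that $\bigcap_{m} V_m$ is dense and in particular nonempty, and any $\alpha$ in this intersection has the property that $E\alpha$ meets every basis open set $U_m$, which is exactly density of $E\alpha$ in $\T^d$.

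The task thus reduces to proving density of each $V_m$. Given a nonempty open $W \subset \T^d$, the plan is to pick an open box $B \subset W$ of coordinate-radius $r > 0$ and exploit the following elementary ``stretching'' observation on the circle: the dilation $x \mapsto nx \pmod 1$ sends an interval of length $2r$ onto an arc of length $2|n|r$, so as soon as $|n| \geq 1/(2r)$ the entire circle $\T$ is covered. Applying this coordinate-wise gives $nB = \T^d$, and in particular $nW \supset U_m$. Since $E \subset \Z$ is infinite, it contains integers of arbitrarily large absolute value, so I may choose such an $n \in E$, and then any $\alpha \in W$ with $n\alpha \in U_m$ witnesses $V_m \cap W \neq \varnothing$.

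The only point that requires any real care is the stretching step, and it is quite elementary once the box $B$ is chosen; no equidistribution input such as Weyl's criterion is needed. In fact a stronger statement is available — for Lebesgue-almost every $\alpha$ the sequence $(n\alpha)_{n \in E}$ is equidistributed in $\T^d$ — but mere density suffices here, so the soft Baire argument is the cleanest route.
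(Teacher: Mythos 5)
Your Baire-category argument is correct: each set $V_m = \bigcup_{n\in E}\{\alpha : n\alpha\in U_m\}$ is open, and the coordinate-wise stretching observation (an arc of length $2r$ maps under $x\mapsto nx$ onto an arc of length $2|n|r$, which covers $\T$ once $2|n|r>1$ — or at worst omits a single point, which cannot matter against the nonempty open set $U_m$) shows each $V_m$ is dense, so $\bigcap_m V_m$ is dense and nonempty by Baire, and any $\alpha$ in it works. The paper does not give its own proof but cites Griesmer's Lemma 6.2, whose proof is the same Baire-category argument, so your proposal matches the intended route.
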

\begin{lemma}[cf. {\cite[Lemma 4.1(i)]{Griesmer-separating-topological-recurrence-measurable}}]
\label{lm:fouronei}
Let $\delta < 1/2$
and $k \in \N$. For all sufficiently large $d \in \N$ there is an $\epsilon > 0$ such that
for all $\alpha \in \T^d$, the set $\tilde{H}(\alpha; k, \epsilon)$ is $\delta$-non-intersective.
\end{lemma}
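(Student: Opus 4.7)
\textbf{Proof plan for Lemma \ref{lm:fouronei}.}

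The plan is to reduce the lemma to the purely geometric problem of finding a large-measure subset $F \subset \T^d$ whose difference set avoids the neighborhood $H_k(\mathbf{1/2}) + V_\epsilon$. Once we have such an $F$ of measure strictly greater than $\delta$, we apply Fatou's lemma exactly as in the proof of \cref{lem:SameIntegral} to the integrand $(1/N)\sum_{n=1}^{N} 1_F(n\alpha + z)$ on $\T^d$. This produces a $z \in \T^d$ for which $A_z := \{n \in \N : n\alpha + z \in F\}$ satisfies $\overline{d}(A_z) \geq m_{\T^d}(F) > \delta$. For any $a, a' \in A_z$, the difference $(a-a')\alpha$ lies in $F - F$ and therefore outside $H_k(\mathbf{1/2}) + V_\epsilon$, so $a - a' \notin \tilde{H}(\alpha;k,\epsilon)$. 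Hence $\tilde{H}(\alpha;k,\epsilon)$ is $\delta$-non-intersective, uniformly in $\alpha$.

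The construction of $F$ uses the binary-code idea underlying \cref{lm:twofive}. Identify $G_d = \{0,1/2\}^d \subset \T^d$ with $\F_2^d$ (via $1/2 \leftrightarrow 1$), so that $H_k(\mathbf{1/2})$ becomes $H_k(\mathbf{1}) \subset \F_2^d$: vectors with at most $k$ coordinates equal to zero. Fix $c > 0$ and set
\[
F_G := \{\mathbf{x} \in \F_2^d : w(\mathbf{x}) \leq d/2 - c\sqrt{d}\}.
\]
By the central limit theorem (or a Chernoff bound), $|F_G|/2^d \to \Phi(-2c)$ as $d \to \infty$. Since $\delta < 1/2 = \Phi(0)$, we may pick $c > 0$ small enough that $\Phi(-2c) > \delta' > \delta$ for a chosen $\delta' \in (\delta, 1/2)$. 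For any $\mathbf{u},\mathbf{v} \in F_G$, the weight of $\mathbf{u}+\mathbf{v}$ is at most $w(\mathbf{u}) + w(\mathbf{v}) \leq d - 2c\sqrt{d}$, which is strictly less than $d-k$ once $d > (k/(2c))^2$; thus $\mathbf{u}+\mathbf{v}$ has strictly more than $k$ zero coordinates and lies outside $H_k(\mathbf{1})$. In $G_d$-language, $F_G - F_G$ is disjoint from $H_k(\mathbf{1/2})$.

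Thicken the code: define $F := F_G + V_\eta \subset \T^d$. Since distinct elements of $G_d$ are at $\ell_\infty$-distance exactly $1/2$ on $\T^d$, choosing $\eta < 1/4$ makes the translates $g + V_\eta$ pairwise disjoint, so $m_{\T^d}(F) = |F_G|(2\eta)^d$. The same separation ensures that whenever $2\eta + \epsilon < 1/2$, the $V_{2\eta}$-thickening of $F_G - F_G$ is disjoint from the $V_\epsilon$-thickening of $H_k(\mathbf{1/2})$. Since $F - F \subset (F_G - F_G) + V_{2\eta}$, this gives $(F-F) \cap (H_k(\mathbf{1/2}) + V_\epsilon) = \varnothing$, which is exactly what we need.

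The main obstacle is the parameter balancing. We need $m_{\T^d}(F) = (|F_G|/2^d)(4\eta)^d > \delta$ while simultaneously $2\eta + \epsilon < 1/2$; the latter constraint forces $\eta$ strictly below $1/4$, which makes $(4\eta)^d$ exponentially small in $d$ for any $\eta$ bounded away from $1/4$. The way out is to let both $\eta$ approach $1/4$ and $\epsilon$ approach $0$ at rates $O(1/d)$. Concretely, taking $\eta = 1/4 - \log(\delta'/\delta)/(8d)$ and $\epsilon = \log(\delta'/\delta)/(8d)$ gives $(4\eta)^d = (1 - \log(\delta'/\delta)/(2d))^d \to \sqrt{\delta'/\delta}$ as $d \to \infty$, so $m_{\T^d}(F) > \delta'\cdot(\delta/\delta')^{1/2} = \sqrt{\delta\delta'} > \delta$ for $d$ sufficiently large, while $2\eta + \epsilon = 1/2 - \log(\delta'/\delta)/(8d) < 1/2$. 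The rest of the argument is the Fatou step described above, which is independent of $\alpha$; hence the same $\epsilon = \epsilon(d)$ works for every $\alpha \in \T^d$.
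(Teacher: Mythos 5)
This is one of the three lemmas the paper explicitly \emph{states without proof}, citing \cite[Lemma 4.1(i)]{Griesmer-separating-topological-recurrence-measurable}; so there is no in-paper argument to compare against, and your proposal has to stand on its own. It does: the reduction to finding $F\subset\T^d$ with $m_{\T^d}(F)>\delta$ and $(F-F)\cap(H_k(\mathbf{1/2})+V_\epsilon)=\varnothing$ is exactly what is needed (the Fatou/averaging step is verbatim the paper's \cref{lem:SameIntegral} with $E=\N$ and $\tau(n)=-n\alpha$, and it is what makes the conclusion uniform in $\alpha$), and the construction of $F$ is sound. I checked the key points: $H_k(\mathbf{1})$ is indeed the set of vectors with at most $k$ zero coordinates, so $w(\mathbf{u}+\mathbf{v})\le d-2c\sqrt d<d-k$ for $d>(k/(2c))^2$ puts $F_G-F_G$ outside $H_k(\mathbf 1)$; the CLT gives $|F_G|/2^d\to\Phi(-2c)>\delta'$; the separation of $G_d$-translates under $2\eta+\epsilon<1/2$ is correct; and the parameter choice $\eta=1/4-\log(\delta'/\delta)/(8d)$, $\epsilon=\log(\delta'/\delta)/(8d)$ yields $m_{\T^d}(F)>\delta'(4\eta)^d\to\sqrt{\delta\delta'}>\delta$. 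This is essentially the K\v{r}\'{\i}\v{z}/Griesmer construction, so you have recovered the intended argument rather than found a new one.

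Two cosmetic remarks. First, ``$(4\eta)^d\to\sqrt{\delta'/\delta}$'' is a typo for $\sqrt{\delta/\delta'}$ (the limit must be $<1$); your subsequent computation $\delta'\cdot(\delta/\delta')^{1/2}=\sqrt{\delta\delta'}$ already uses the correct value, so nothing breaks. Second, since $(1-x/d)^d$ increases to $e^{-x}$ from below, you should say explicitly that $(4\eta)^d$ exceeds, say, $(1-o(1))\sqrt{\delta/\delta'}$ for large $d$, which is enough because $\sqrt{\delta\delta'}>\delta$ strictly; as written the ``for $d$ sufficiently large'' clause covers this but a referee might ask.
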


\begin{proof}[Proof of Lemma \ref{lm:threeeight}.] 
Fix $k, m \in \N$ and $\delta < 1/2$. Let $E \subset\Z$ be infinite. Let $E' \subset E$ be
an infinite subset with $m\mid (b - a)$ for all $a, b \in E'$ (i.e. the elements of $E'$ are mutually
congruent mod $m$).
By Lemma \ref{lm:fouronei}, choose $d \in \N$ large enough and $\epsilon > 0$ small enough that for
all $\alpha \in \T^d$, $\tilde{H}_\alpha := \tilde{H}(\alpha; 2k + 1, \epsilon)$ is $\delta$-non-intersective. Using \cref{lm:sixtwo}, we fix $\alpha \in \T^d$
such that $ E'\alpha$ is dense in $\T^d$. We then apply Lemma \ref{lm:fouroneiii} to find a finite
subset $S_0 \subset \tilde{H}_\alpha \cap (E' - E')$ which is $2k$-chromatically $E'$-intersective (hence $E$-intersective).
Since the elements of $E'$ are mutually congruent mod $m$, we have $S_0 \subset m\cdot\Z$, and  $S_0$ is $2k$-chromatically intersective and $\delta$-non-intersective. Let
$S := S_0/m$. By Lemma \ref{lm:sixone}, in turn $S$ is also $\delta$-non-intersective. Now $S$ is the desired set: We
have $m\cdot S = S_0 \subset E' - E' \subset E - E$, $S$ is $\delta$-non-intersective, and $m\cdot S$ is $2k$-chromatically
$E$-intersective.
\end{proof}
 

\end{document}